\newcommand{\field}[1]{\mathbb{#1}}
\newcommand{\R}{\field{R}}
\newcommand{\N}{\field{N}}
\newcommand{\be}{\begin{equation}}
\newcommand{\ve}{\varepsilon}
\newcommand{\vp}{\varphi}
\newcommand{\al}{\alpha}
\newcommand{\la}{\lambda}
\newcommand{\om}{\omega}
\newcommand{\A}{{\cal A}}
\newcommand{\B}{{\cal B}}
\newcommand{\F}{{\cal F}}
\newcommand{\G}{{\cal G}}
\newcommand{\J}{{\cal J}}
\newcommand{\de}{\partial}
\newcommand{\im}{\mathop{\rm im}}
\newcommand{\codim}{\mathop{\rm codim}}
\newcommand{\ee}{\end{equation}}
\newcommand{\qed}{\hfill \rule{2.3mm}{2.3mm}}
\newtheorem{theorem}{Theorem}[section]
\newtheorem{lemma}[theorem]{Lemma}
\newtheorem{remark}[theorem]{Remark}
\newcommand{\reff}[1]{(\ref{#1})}
 \newenvironment{proof}{\par\smallbreak{\sl\bf Proof.~}}
 {\unskip\nobreak\hfill \qed \par\medbreak}
\title{
	Forced Frequency Locking for Semilinear Dissipative Hyperbolic PDEs
} 
\newcounter{thesame}
\author{
	Irina Kmit
	\thanks{Institute of Mathematics, Humboldt University of Berlin. On leave from the
		Institute for Applied Problems of Mechanics and Mathematics,
		Ukrainian National Academy of Sciences. 
		Tel.: 
		+49(030)2093-45380.
		{\small   E-mail:
			{\tt kmit@mathematik.hu-berlin.de}
	}}
	\ \ \ Lutz Recke \thanks{Institute of Mathematics, Humboldt University of Berlin.
		{\small   E-mail:
			{\tt recke@mathematik.hu-berlin.de}}
}}
\date{}
\begin{document}
	
	\maketitle

\begin{abstract}
\noindent
This paper concerns the behavior of time-periodic solutions to 1D dissipative autonomous semilinear hyperbolic PDEs under the influence of small time-periodic forcing. We show that the phenomenon of forced frequency locking happens similarly to the analogous phenomena  known for ODEs or parabolic PDEs. However, the proofs are essentially more difficult than for ODEs or parabolic PDEs. In particular,  non-resonance conditions are needed, which do not have counterparts in the cases of ODEs or parabolic PDEs. We derive a scalar equation which answers the main question of forced frequency locking: Which time shifts of the solution to the unforced equation do survive under which forcing?

\end{abstract}
\emph{Key words:}  injection locking; master-slave synchronization, frequency
entrainment, phase equation and asymptotic phases,
Fredholm solvability of linear time-periodic hyperbolic PDEs

\section{Introduction}
\label{Introduction}
The following phenomenon is usually called forced frequency locking (but also injection locking, 
master-slave synchronization, or frequency entrainment): 
If $u^0$ is a $T^0$-periodic solution to a
nonlinear autonomous evolution equation, if this solution is locally unique up to phase shifts (for example if $\{u^0(t): t \in \R\}$ is a limit cycle), and 
if the autonomous equation is forced by a periodic forcing with intensity $\ve \approx 0$ and period $T \approx T^0$, then generically there exist $\tau \in \R$ (the so-called scaled period deviation) and $\vp \in \R$ (the so-called asymptotic phase) such that for all $\ve$ and $T$ with
\be
\label{coneprop}
\ve \approx 0 \;\mbox{ and }\; \frac{T-T^0}{\ve} \approx \tau
\ee
there exist $T$-periodic solutions $u=u_{\ve,T}$ of the type
\be
\label{as}
u_{\ve,T}(t)\approx u^0(tT^0/T+\vp)
\ee
to the forced equation.
These so-called locked periodic solutions to the forced equation are locally unique and depend continuously on $\ve$ and $T$. Moreover, if $u^0$ is an exponentially orbitally stable periodic solution to the unforced equation, then at least one of the 
locked periodic solutions to the forced equation is exponentially stable.

Forced frequency locking appears in many areas of nature, and it is used in diverse applications in technology, starting with the poineering work of B. van der Pol \cite{Pol}, A. A. Andronov and A. A. Witt \cite{Witt} and R. Adler \cite{Adler}.
Moreover, since a long time it is mathematically rigorously 
described for ODEs with smooth forcing  (see, e.g. \cite{HT78,L50,L59,V80,V83} and \cite[Chapter 5.3.5]{Chi99})
or with Dirac-function like forcing (cf. \cite{Re18}), for functional-differential equations (cf., e.g. \cite{P}) as well as for parabolic PDEs (cf., e.g. \cite{T87}).

It turns out that forced frequency locking appears also in dissipative hyperbolic PDEs, but up to now there does not exist a rigorous mathematical description. Reasons for that seem to be the following.

First, the question, if a nondegenerate time-periodic solution to a dissipative nonlinear hyperbolic PDE is locally unique (up to time shifts in the autonomous case) and depends continuously or even smoothly on the system
parameters, is much more delicate than for ODEs or parabolic PDEs (cf., e.g. \cite{Raugel,Raugel1,KR4}). In particular, for smoothness of the data-to-solution map of hyperbolic PDEs it is necessary, in general,
that the equation depends smoothly not only on the unknown function $u$, but also on the space variable $x$
(and the time variable $t$ in the non-autonomous case).
This is completely different to what is known for parabolic PDEs (cf. \cite{GR}).  Similarly, the description of Hopf bifurcation for dissipative hyperbolic PDEs is much more complicated than for ODEs or parabolic PDEs (cf. \cite{KR3,KR3a,Kos1,Kos2}). 

Second, linear autonomous hyperbolic partial differential operators with one space dimension essentially differ from those with more than one space dimension:
They satisfy the spectral mapping property (see \cite{Lopes} in $L^p$-spaces and, more important
for applications to nonlinear problems, \cite{Lichtner1} in $C$-spaces)
and they generate Riesz bases (see, e.g. \cite{Guo,Mogul}), what is not the case,
in general, if the space dimension is larger than one (see the celebrated counter-example of M. Renardy in \cite{Renardy}).
Therefore, the question of  Fredholmness of those operators in appropriate spaces of time-periodic functions is highly difficult.

The main consequence (from the point of view of applied mathematical techniques) of the fact, that the space dimension of  the hyperbolic PDEs, which we consider, is one, consists in the following: We can use integration along characteristics
in order to replace the nonlinear PDEs by nonlinear partial integral equations (see, e.g. \cite{Appell1} for the notion ``partial integral equation'').
After that, we can apply
known Fredholmness criteria to the linearized partial integral equations (see \cite[Corollary 4.11]{KR4}
and \cite[Theorem 1.2]{second}). 
Here we need the strict hyperbolicity condition (for first-order systems it is \reff{aassnot}, and for second-order equations it is automatically fulfilled), which does not have a counterpart 
in the case of ODEs or parabolic PDEs.

And third, we have to assume conditions (for first-order systems 
it is \reff{nonressys} or \reff{nonressys1}, 
and for second-order equations it is \reff{nonreseq} or
\reff{nonreseq1})
  implying that small divisors, related to the linear parts of the unforced equation 
  (linearization in the periodic solution of the unforced equation),
cannot come too close to zero. There are no counterparts to that condition
in the case of ODEs or parabolic PDEs.\\

In the present paper we will describe forced frequency locking for boundary value problems for 1D semilinear first-order hyperbolic systems of the type
\be
\label{sysold}
\left.
\begin{array}{l}
\partial_tu_j(t,x)+a_j(x)\partial_xu_j(t,x)+b_j(x,u(t,x))=\ve f_j(t/T,x), \;j=1,2, \;x \in [0,1],\\
u_1(t,0)=r_1u_2(t,0)+\ve g_1(t/T),\;
u_2(t,1)=r_2u_1(t,1)+\ve g_2(t/T)
\end{array}
\right\}
\ee
as well as for 1D semilinear second-order hyperbolic equations of the type
\be
\label{eqold}
\left.
\begin{array}{l}
\partial_t^2u-a(x)^2\partial_x^2u+b(x,u,\partial_tu,\partial_xu)=\ve f(t/T,x), \;x \in [0,1],\\
u(t,0)=\ve g_1(t/T),\;
\partial_xu(t,1)+\gamma u(t,1)=\ve g_2(t/T).
\end{array}
\right\}
\ee
We suppose that the  unforced problems, i.e.  \reff{sysold} and \reff{eqold} with $\ve=0$, have time-periodic solutions $u^0$ with period one, that the functions $f_j(\cdot,x)$ and $g_j$
in \reff{sysold} and the functions
$f(\cdot,x)$ and $g_j$ in  \reff{eqold}
are 1-periodic, i.e. that  the functions $f_j(\cdot/T,x)$ and $g_j(\cdot/T)$ in \reff{sysold}
and the functions $f(\cdot/T,x)$ and $g_j(\cdot/T)$ in \reff{eqold}
are $T$-periodic, and we show how to find $\tau$ and $\vp$ such that, for all parameters $\ve$ and $T$ with \reff{coneprop}  and with $T^0=1$, there exist $T$-periodic in time solutions of the type \reff{as} to problems \reff{sysold} and \reff{eqold}, that those  solutions are locally unique and depend continuously on $\ve$ and $T$, and we describe their asymptotic behavior for $\ve\to 0$.

\begin{remark}
\label{abstractequi}\rm
Using a more abstract point of view, one can say that forced frequency locking is a symmetry breaking problem. Indeed, if one formulates the evolution equation as an abstract equation in a space of functions with fixed time period, then the unforced equation is equivariant with respect to time shifts, i.e. equivariant with respect to an action of the rotation group SO(2) on the function space, and the small forcing breaks this equivariance. The solution $u^0$ to the unforced problem generates a whole family of solutions, the family of all its time shifts, i.e. its orbit under the group action. The phenomenon of forced frequency locking is the phenomenon, that some of the members of the group orbit may survive under appropriate small symmetry breaking perturbations. Hence, in order to answer the main questions (Which members do survive under which forcing?) one can use the machinery of equivariant bifurcation theory (new coordinates in a tubular neighbourhood of the group orbit, scaling, implicit function theorem), and this is, what we are going to do. But there is one necessary condition for this machinery to work: The linearization in $u^0$ of the equation should be Fredholm of index zero in the function space. The verification of this condition in the case of hyperbolic PDEs is much more complicated than for ODEs or parabolic PDEs.
\end{remark}

\begin{remark}
\label{multi}\rm
We do not believe that our main results (Theorems \ref{sys1}--\ref{eq2} below) are true for
multidimensional hyperbolic problems of the type \reff{sysold} and \reff{eqold}, in general. It turns out that in the
multidimensional  case one cannot expect to have locally unique time-periodic solutions  without imposing additional conditions (like prescribed spacial frequency vectors, cf. \cite{Kos2,Kos3}).  In other words: We do not believe that
the linearized multidimensional problems are Fredholm of index zero in 
appropriate spaces of time-periodic functions, in general.
\end{remark}

\begin{remark}
\label{stab}\rm
We do not know if dynamic stability properties of the 1-periodic solutions to the unforced problems become inherited on some of the locked $T$-periodic solutions to the forced problems  \reff{sysold} and \reff{eqold}. For ODEs and parabolic PDEs this is known to be true, and we expect that this is true also for \reff{sysold} and \reff{eqold}. But for showing this, we should consider the initial-boundary value problems corresponding to \reff{sysold} and \reff{eqold}, what we do not do in the present paper.
\end{remark}

\begin{remark}
\label{discont}\rm
In many technical applications the forcing is discontinuous, for example step-like, in time (and, in the case of PDEs, in space also). In cases of ODEs or parabolic PDEs this does not essentially change the phenomenon of forced frequency locking, only the classical solution setting has to be replaced by an appropriate weak one.
But, unfortunately, we don't know what happens in problems \reff{sysold} and \reff{eqold} if the forcing is discontinuous. It would be important and interesting for applications to know, which discontinuous
forcing in \reff{sysold} and \reff{eqold} destroys the phenomenon of forced frequency locking and which do not destroy.
\end{remark}

\subsection{Results for first-order systems}
\label{Systems}

In order to formulate our results for problem \reff{sysold} we introduce a new scaled time and new scaled unknown functions $u=(u_1,u_2):\R\times [0,1]\to \R^2$ as follows:
$$
t_{new}:=\frac{1}{T}t_{old},\;
u_{new}(t_{new},x):=u_{old}(t_{old},x).
$$
Then the problem of $T$-periodic solutions to \reff{sysold} is transformed into the following one (with $(t,x) \in \R \times [0,1])$):
\be
\label{sys}
\left.
\begin{array}{l}
\displaystyle\frac{1}{T}\partial_tu_j(t,x)+a_j(x)\partial_xu_j(t,x)+b_j(x,u(t,x))=\ve f_j(t,x), \;j=1,2,\\
u_1(t,0)=r_1u_2(t,0)+\ve g_1(t),\;
u_2(t,1)=r_2u_1(t,1)+\ve g_2(t),\\
u(t+1,x)=u(t,x).
\end{array}
\right\}
\ee
Concerning the data of problem \reff{sys} we suppose (for $j=1,2$)
\begin{eqnarray}
\label{aass}
&&a_j \in C^1([0,1]),\; b_j \in C^3([0,1]\times \R),\ r_j \in\R,\;
a_j(x)\not=0 \mbox{ for all } x \in [0,1],\\
\label{aassnot}
&&a_1(x)\not=a_2(x) \mbox{ for all } x \in [0,1]
\end{eqnarray}
and
\be
\label{fass}
\left.
\begin{array}{l}
f_j \in C^1(\R \times [0,1]), 
\;g_j \in C^1(\R),\\
f_j(t+1,x)=f_j(t,x),\; g_j(t+1)=g_j(t)\mbox{ for all } (t,x) \in \R \times [0,1].
\end{array}
\right\}
\ee
Speaking about solutions to \reff{sys} (or its linearizations) we mean classical solutions, i.e. $C^1$-functions $u:\R\times [0,1]\to \R^2$.

Concerning the unforced problem, i.e. \reff{sys} with $\ve=0$ and $T=1$, we suppose that
\be
\label{unull}
\mbox{there exists a solution $u^0$ to \reff{sys} with $\ve=0$
and $T=1$.}
\ee
Further, for $j,k=1,2$ and $t\in \R$ and $x,y \in [0,1]$ we set
\be
\label{bjk}
\al_j(x,y):=\int_x^y\frac{dz}{a_j(z)},\quad
b_{jk}(t,x):=\partial_{u_k}b_j(x,u^0(t,x)),
\ee
and we consider the conditions
\be
\label{nonressys}
|r_1r_2|\not=\exp \int_0^1\left(\frac{b_{11}
(t-\al_1(x,1),x)}{a_1(x)}-
\frac{b_{22}
(t-\al_2(x,1),x)}{a_2(x)}\right)\,dx\quad 
\mbox{for all } t \in \R
\ee
and
\be
\label{nonressys1}
|r_1r_2|\not=\exp \int_0^1\left(\frac{b_{11}
(t+\al_1(0,x),x)}{a_1(x)}-
\frac{b_{22}
(t+\al_2(0,x),x)}{a_2(x)}\right)\,dx\quad 
\mbox{for all } t \in \R.
\ee
Below (cf. Subsection \ref{Regularity}) we will show that, if the assumptions \reff{aass}, \reff{aassnot} and \reff{unull} are satisfied and if  one of the conditions \reff{nonressys} and \reff{nonressys1} is satisfied, then not only the first partial derivatives $\partial_tu^0$ and $\partial_xu^0$ exist and are continuous, but also the second partial derivatives $\partial_t^2u^0$  and $\partial_t\de_xu^0$ exist and are continuous. 
Therefore, the function $u=\partial_tu^0$ solves the linear homogeneous problem
\be
\label{linsys}
\left.
\begin{array}{l}
\partial_tu_j(t,x)+a_j(x)\partial_xu_j(t,x)+
\sum_{k=1}^2b_{jk}(t,x)u_k(t,x)=0, \;j=1,2,\\
u_1(t,0)=r_1u_2(t,0),\;
u_2(t,1)=r_2u_1(t,1),\\
u(t+1,x)=u(t,x).
\end{array}
\right\}
\ee
Hence, the following assumption makes sense:
\be
\label{kersys}
\mbox{For any solution $u$ to \reff{linsys} there exists a constant $c\in \R$ such that $u=c \partial_tu^0$.}
\ee
Remark that, if  assumptions \reff{aass},
\reff{aassnot}
and \reff{unull} are satisfied, but neither \reff{nonressys} nor \reff{nonressys1}, then it may happen that $\partial_t^2u^0$ does not exist (cf. Remark \ref{counter}).
Further, we consider the  linear homogeneous problem adjoint to \reff{linsys}, namely
\be
\label{adsys}
\left.
\begin{array}{l}
-\partial_tu_j(t,x)-\partial_x(a_j(x)u_j(t,x))+
\sum_{k=1}^2b_{kj}(t,x)u_k(t,x)=0, \;j=1,2,\\
r_1a_1(0)u_1(t,0)+a_2(0)u_2(t,0)=0,\;
r_2a_2(1)u_2(t,1)+a_1(1)u_1(t,1)=0,\\
u(t+1,x)=u(t,x),
\end{array}
\right\}
\ee
and we suppose that
\be
\label{keradsys}
\mbox{there exists a solution $u=u^*$ to \reff{adsys} such that } 
\sum_{j=1}^2\int_0^1\int_0^1 \partial_tu_j^0 u_j^* \,dtdx=1.
\ee
It follows from assumption \reff{keradsys} that $\partial_tu^0$ is not the zero function, i.e. that $u^0$ is not constant with respect to time.
 Therefore, assumption \reff{kersys} yields that the space of all solutions to \reff{linsys} has dimension one, and, hence, the space of all solutions to \reff{adsys} has dimension one also. Moreover, assumption \reff{keradsys} means that these two one-dimensional spaces are not $L^2$-orthogonal. In other words: Zero is a geometrically and algebraically simple eigenvalue to the eigenvalue problem corresponding to \reff{linsys}.

In order to formulate our results concerning problem \reff{sys}, we introduce some further notation. We will work with the 1-periodic function $\Phi:\R\to\R$, which is defined by
\begin{eqnarray}
\label{Phidef}
\Phi(\vp)&:=&
-\sum_{j=1}^2\int_0^1\int_0^1f_j(t-\vp,x)u^*_j(t,x)\,dtdx\nonumber\\
&&-\int_0^1\left(a_1(0)
g_1(t-\vp)u_1^*(t,0)
-a_2(1)
g_2(t-\vp)u_2^*(t,1)\right)dt.
\end{eqnarray}
Further, we will work with the maximum norm
$$
\|u\|_\infty:=\max\{|u_j(t,x)|:\; (t,x) \in \R\times [0,1], \; j=1,2\}
$$
for continuous functions $u:\R\times [0,1] \to \R^2$, which are periodic with respect to $t$, and with shift operators $S_\vp$ (for $\vp \in \R$), which work on those functions as
\be 
\label{repdef}
[S_\vp u](t,x):=u(t+\vp,x).
\ee
And finally, for $\ve_0>0$ and $\tau_0 \in \R$ we consider triangles $K(\ve_0,\tau_0) \subset \R^2$, which are defined by
$$
K(\ve_0,\tau_0):=\{(\ve,1+\ve \tau) \in \R^2: \;
\ve \in (0,\ve_0),\; |\tau-\tau_0|<\ve_0\}.
$$

Theorem \ref{sys1} below concerns existence, local uniqueness and continuous dependence of families of solutions $u=u_{\ve,T}$ to \reff{sys} (with $(\ve,T) \in K(\ve_0,\tau_0)$ and with
certain $\ve_0>0$ and $\tau_0 \in \R$), and it describes the asymptotic behavior of $u_{\ve,T}$
for $\ve \to 0$:

\begin{theorem}
\label{sys1}
Suppose \reff{aass}--\reff{unull},
\reff{kersys} and \reff{keradsys}, and assume that one of the conditions \reff{nonressys} and
\reff{nonressys1} is satisfied.
Let $\vp_0,\tau_0 \in \R$ be given such that
\be
\label{beta}
\Phi(\vp_0)=\tau_0 \mbox{ and } \Phi'(\vp_0)\not=0.
\ee
Then the following is true:

(i)  {\bf existence and local uniqueness: }
There exist $\ve_0 >0$ and $\delta>0$ such that for all   $(\ve,T) \in K(\ve_0,\tau_0)$ there exists a unique solution $u=u_{\ve,T}$ 
to \reff{sys} with  
$\|u-S_{\vp_0}u^0\|_\infty<\delta$.

(ii) {\bf continuous dependence: } The map  $(\ve,T) \in K(\ve_0,\tau_0) \mapsto u_{\ve,T}$ 
is continuous with respect to $\|\cdot\|_\infty$.

(iii)  {\bf asymptotic behavior: } We have
$$
\sup_{(\ve,T) \in K(\ve_0,\tau_0)}\frac{1}{\ve}\;\inf_{\vp \in \R}\|u_{\ve,T}-S_\vp u^0\|_\infty<\infty.
$$

(iv) {\bf asymptotic phases: } There exists a continuous function $\tau \in (\tau_0-\ve_0,\tau_0+\ve_0) \mapsto
\vp_\tau \in \R$ with  $\vp_{\tau_0}=\vp_0$ and
\be
\label{asphase}
\Phi(\varphi_\tau)=\tau \mbox{ and }
\lim_{\ve \to 0}\left\|[u_{\ve,T}-S_\vp u^0]_{T=1+\ve \tau,\vp=\vp_\tau}\right\|_\infty=0 \mbox{ for all } \tau \in (\tau_0-\ve_0,\tau_0+\ve_0).
\ee
\end{theorem}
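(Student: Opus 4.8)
The plan is to follow the equivariant-bifurcation strategy outlined in Remark~\ref{abstractequi}. First I would recast the problem of $T$-periodic solutions to \reff{sys} as an abstract equation $\F(u,\ve,T)=0$ in a Banach space $U$ of $C^1$-functions on $\R\times[0,1]$ that are $1$-periodic in $t$ (together with the boundary conditions absorbed into the domain or into the map). The key structural facts are: (a) $\F(\cdot,0,1)$ is equivariant under the shift group $\{S_\vp:\vp\in\R\}$, so $S_\vp u^0$ is a one-parameter family of solutions at $(\ve,T)=(0,1)$; (b) by \reff{kersys} the kernel of $\partial_u\F(u^0,0,1)$ is spanned by $\partial_t u^0$, and by \reff{keradsys} together with the discussion after it, zero is an algebraically simple eigenvalue, so $\partial_u\F(u^0,0,1)$ is Fredholm of index zero with one-dimensional kernel and cokernel; (c) the cokernel is spanned (via the $L^2$-pairing) by the adjoint solution $u^*$, and the Fredholm/solvability statement is exactly what is furnished by the regularity results cited in Subsection~\ref{Regularity} and by \cite[Corollary 4.11]{KR4}, \cite[Theorem 1.2]{second}. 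The hyperbolicity condition \reff{aassnot} and one of the non-resonance conditions \reff{nonressys}/\reff{nonressys1} are what make (b) and the needed extra regularity of $u^0$ hold.

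Second, I would introduce Lyapunov--Schmidt-type coordinates in a tubular neighbourhood of the group orbit $\{S_\vp u^0:\vp\in\R\}$: write the unknown as $u=S_\vp(u^0+v)$ with $\vp\in\R$ near $\vp_0$ and $v$ in a fixed complement of $\mathrm{span}\{\partial_t u^0\}$ in $U$. Substituting into $\F(u,\ve,T)=0$ and using equivariance, the equation becomes $\F(u^0+v,\ve,T_\vp)=0$ where the $T$-dependence gets conjugated appropriately; projecting with the complementary projector $Q$ onto $\im\partial_u\F(u^0,0,1)$ and with $I-Q$ onto the cokernel splits this into a ``range'' equation and a one-dimensional ``bifurcation'' equation. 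The range equation is solved for $v=v(\vp,\ve,T)$ by the implicit function theorem, since $Q\partial_u\F(u^0,0,1)$ restricted to the complement is an isomorphism; this gives $v=O(\ve)$ uniformly for $\vp$ near $\vp_0$, $T$ near $1$. Plugging back, the bifurcation equation has the form $\ve\,\Psi(\vp,\ve,T)=0$ with $\Psi(\vp,0,1)=\Phi(\vp)-\tau$ up to the scaling $T=1+\ve\tau$; here the explicit formula \reff{Phidef} for $\Phi$ arises by pairing the forcing terms $f_j$, $g_j$ against $u^*$, and the $\tau$-term arises by differentiating the $\tfrac1T\partial_t$ coefficient at $T=1$ and pairing $\partial_t u^0$ against $u^*$, which by \reff{keradsys} gives the normalisation $1$.

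Third, after dividing by $\ve$, I would introduce the scaled parameter $\tau=(T-1)/\ve$ and solve $\Psi(\vp,\ve,1+\ve\tau)=0$ near $(\vp_0,0,\tau_0)$. Since $\Psi(\vp_0,0,\tau_0)=\Phi(\vp_0)-\tau_0=0$ and $\partial_\vp\Psi(\vp_0,0,\tau_0)=\Phi'(\vp_0)\neq0$ by \reff{beta}, the implicit function theorem yields a unique continuous (indeed $C^1$) function $(\ve,\tau)\mapsto\vp(\ve,\tau)$ with $\vp(0,\tau_0)=\vp_0$; composing with $v$ gives the solution family $u_{\ve,T}=S_{\vp(\ve,\tau)}(u^0+v(\vp(\ve,\tau),\ve,T))$. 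Local uniqueness in the $\|\cdot\|_\infty$-ball of radius $\delta$ follows because any solution that close to $S_{\vp_0}u^0$ must, after choosing the tubular-coordinate $\vp$, satisfy both the range equation (forcing $v=v(\vp,\ve,T)$ by uniqueness in the IFT) and the bifurcation equation (forcing $\vp=\vp(\ve,\tau)$); part (iii) is the quantitative $O(\ve)$ bound on $v$; and part (iv) is obtained by reading off $\vp_\tau:=\vp(0,\tau)$, noting $\Phi(\vp_\tau)=\tau$ and $v\to0$ as $\ve\to0$.

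The main obstacle I expect is not the abstract bifurcation machinery, which is standard once the Fredholm property is in hand, but rather justifying that the whole construction can be carried out in a $C^1$ (or $C^2$) setting for a hyperbolic PDE: establishing that $\F$ is well-defined and $C^1$ (or $C^2$) as a map between the chosen function spaces, that $u^0$ has the extra time-regularity needed for $\partial_t u^0$ to lie in $U$ and solve \reff{linsys}, and that $\partial_u\F(u^0,0,1)$ is genuinely Fredholm of index zero in $U$. All three points hinge on integration along characteristics turning \reff{sys} into a partial integral equation, on the strict hyperbolicity \reff{aassnot}, and critically on the non-resonance conditions \reff{nonressys}/\reff{nonressys1} that keep the relevant small divisors away from zero; without the latter, as the authors note in Remark~\ref{counter}, even $\partial_t^2u^0$ may fail to exist and the argument collapses. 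Handling the smoothness of the superposition operators $u\mapsto b_j(\cdot,u)$ between $C^1$-type spaces (which is why $b_j\in C^3$ is assumed) is the other delicate technical ingredient.
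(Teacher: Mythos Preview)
Your overall strategy---equivariant Lyapunov--Schmidt reduction in a tubular neighbourhood of the group orbit, leading to the scalar phase equation $\Phi(\vp)=\tau$---is the same as the paper's. But you misidentify where the real difficulty lies, and this leads to a genuine gap.

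You say the main obstacle is establishing that $\F$ is $C^1$ (or $C^2$) as a map between $C^1$-type function spaces. In fact the paper does \emph{not} work in $C^1$. It passes via integration along characteristics to the partial integral equation \reff{parint}/\reff{abstract} and works entirely in $C_{per}(\R\times[0,1];\R^2)$ with the sup norm. In that space the Fredholm property of the linearization (Lemma~\ref{Fredholm}) and the $C^2$-smoothness in $u$ and $\beta$ are unproblematic; the $b_j\in C^3$ assumption handles the superposition operators. What the paper explicitly flags (see the passage after \reff{abstract} and Remark~\ref{advant}) is that the operator families $T\mapsto C(T,\beta^0)R$ and $T\mapsto D(T,\beta^0)$ are \emph{not} norm-continuous in $T$---only strongly continuous. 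Differentiating \reff{parint} in $T$ costs a $t$-derivative of $u$, so the map $(\ve,T)\mapsto\F(\cdot,\ve,T)$ is not $C^1$ in the operator sense. Consequently your step ``the range equation is solved for $v=v(\vp,\ve,T)$ by the implicit function theorem, since $Q\partial_u\F(u^0,0,1)$ is an isomorphism'' fails: knowing invertibility at $(\ve,T)=(0,1)$ does not propagate to a neighbourhood, because the derivative does not depend continuously on $T$ in operator norm.

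The paper's remedy is threefold. First, an a~priori estimate (Lemma~\ref{apri}) shows directly, by contradiction and a diagonal compactness argument, that any solution near the orbit satisfies $|T-1|+\|v\|_\infty\le c\ve$; this justifies the scaling $T=1+\ve\tau$, $v=\ve w$ before any IFT is invoked. Second, the invertibility of $\partial_{(\vp,w)}F(\ve,\tau,\vp_0,w_0)$ is established \emph{for every} $(\ve,\tau)$ near $(0,\tau_0)$ separately (Lemmas~\ref{apri1} and~\ref{Fred1}), again by contradiction/compactness arguments rather than by perturbation from the single point $(0,\tau_0)$. Third, a tailored implicit function theorem (Theorem~\ref{app3}) is used which requires only \reff{ass1}--\reff{ass5}---uniform bounds and pointwise continuity in $\lambda$---not norm-continuity of $\partial_u\F$ in the parameter. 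Finally, the $C^1$-regularity of the resulting solutions (needed to return to classical solutions of \reff{sys}) is obtained a~posteriori in Lemma~\ref{wreg}. Your sequential scheme (solve range equation, then bifurcation equation) would have to confront the same $T$-continuity issue at the range-equation stage, and you have not indicated how.
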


The next theorem claims, roughly speaking, that almost any solution $u$ to \reff{sys} with $\ve \approx 0$, $T\approx 1$ and with $\inf_{\vp\in \R}\|u-S_\vp u^0\|_\infty \approx 0$ is described by one of the solution families from Theorem \ref{sys1}:

\begin{theorem}
\label{sys2}
Suppose \reff{aass}--\reff{unull},
\reff{kersys} and \reff{keradsys}, and  assume that  one of the conditions \reff{nonressys} and
\reff{nonressys1} is satisfied.
Let
$(\ve_k,T_k,u_k)$, $k \in \N$, be a sequence
of solutions to \reff{sys} such that
\be
\label{seqlim}
\lim_{k \to \infty}\left(|\ve_k|+|T_k-1|+\inf_{\vp \in \R}\|u_k-S_\vp u^0\|_\infty\right)=0,
\ee
and let $\vp_k \in [0,1]$, $k \in \N$, be a sequence such that
$\inf_{\vp \in \R}\|u_k-S_\vp u^0\|_\infty=
\|u_k-S_{\vp_k} u^0\|_\infty$.
Then
$$
\lim_{k \to \infty} \left(\Phi(\vp_k)-\frac{T_{k}-1}{\ve_{k}}\right)=0.
$$
In particular, if there exist 
$\vp_0 \in \R$ such that $\vp_k \to \vp_0$ for $k \to \infty$ and  $\Phi'(\vp_0)\not=0$, then for large $k$ we have
$u_k=u_{\ve_k,T_k}$,
where $u_{\ve,T}$ is the family of solutions to \reff{sys}, described by Theorem \ref{sys1}, corresponding to $\vp_0$ and $\tau_0=\Phi(\vp_0)$. 
\end{theorem}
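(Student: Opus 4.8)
The plan is to carry out, in a tubular neighbourhood of the group orbit $\{S_\vp u^0:\vp\in\R\}$, the same Lyapunov--Schmidt reduction that underlies Theorem~\ref{sys1}, to read off the resulting scalar bifurcation equation, and then to show that any solution sequence obeying \reff{seqlim} must satisfy this equation in the limit. Since the conclusion is a statement about a limit, it suffices to prove it along an arbitrary subsequence; as $\vp_k\in[0,1]$, I would pass to a subsequence with $\vp_k\to\vp_*\in[0,1]$, and I would also assume $\ve_k\ne0$ for all $k$ (otherwise $(T_k-1)/\ve_k$ is undefined). Then \reff{seqlim} together with $\vp_k\to\vp_*$ gives $\|u_k-S_{\vp_*}u^0\|_\infty\to0$.

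Next I would introduce coordinates near the orbit. For $s\approx0$ let $L_s$ be the differential operator on the left-hand side of \reff{linsys}, but with $b_{jk}(t,x)$ replaced by $\partial_{u_k}b_j(x,u^0(t+\vp_*+s,x))$, acting on $1$-periodic functions subject to the boundary conditions of \reff{linsys}. I would invoke the Fredholm theory worked out in the proof of Theorem~\ref{sys1} (which rests on \reff{aass}, \reff{aassnot}, \reff{kersys}, \reff{keradsys}, one of \reff{nonressys}, \reff{nonressys1}, the regularity of $u^0$ and $u^*$, and which is stable under the small perturbation $s\approx0$ of the coefficients): it yields that $L_s$ is Fredholm of index zero with $\ker L_s=\mathrm{span}\,S_{\vp_*+s}\partial_tu^0$ and with range equal to the annihilator, with respect to the $L^2$-pairing with the boundary terms appearing in \reff{adsys}, of $S_{\vp_*+s}u^*$. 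By the implicit function theorem every $u$ in a $\|\cdot\|_\infty$-neighbourhood of $S_{\vp_*}u^0$ can then be written uniquely as $u=S_{\vp_*+s}u^0+w$ with $|s|$ small and $w$ in the complement $Z_s=\{w:\langle w,S_{\vp_*+s}u^*\rangle=0\}$ (same pairing); this coordinate map is continuous and sends $S_{\vp_*}u^0$ to $(s,w)=(0,0)$, so for large $k$ one has $u_k=S_{\vp_*+s_k}u^0+w_k$ with $s_k\to0$ and $\|w_k\|_\infty\to0$.

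Then I would substitute $u=S_{\vp_*+s}u^0+w$ and $T=1+\mu$ into \reff{sys}. Using that $S_{\vp_*+s}u^0$ solves \reff{sys} with $\ve=\mu=0$, the equation for $w$ becomes $L_sw=\ve f-(\frac{1}{T}-1)\,S_{\vp_*+s}\partial_tu^0-(\frac{1}{T}-1)\partial_tw-N(w)$ with boundary data $\ve g_1,\ve g_2$, where $N(w)=O(\|w\|_\infty^2)$ collects the superlinear part of $b$. Projecting onto the range of $L_s$ and solving by the implicit function theorem gives $w=W(\ve,\mu,s)$ with $W(0,0,s)=0$, hence $\|W(\ve,\mu,s)\|_\infty\le C(|\ve|+|\mu|)$ locally uniformly in $s$; by uniqueness of the small solution, $w_k=W(\ve_k,\mu_k,s_k)$. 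Projecting onto $S_{\vp_*+s}u^*$ gives the scalar equation, and here the key facts are: $\langle\ve f,S_{\vp_*+s}u^*\rangle$ together with the boundary contributions of $\ve g_1,\ve g_2$ equals $-\ve\,\Phi(\vp_*+s)$ by the very definition \reff{Phidef}; $\langle S_{\vp_*+s}\partial_tu^0,S_{\vp_*+s}u^*\rangle=1$ by \reff{keradsys}; and $\frac{1}{T}-1=-\mu+O(\mu^2)$, while the $w$-dependent terms contribute only $O((|\ve|+|\mu|)\|w\|_\infty)=O((|\ve|+|\mu|)^2)$. Thus the scalar equation reads
\be
-\ve\,\Phi(\vp_*+s)+\mu+O\big((|\ve|+|\mu|)^2\big)=0,
\ee
with the error bound uniform for $s\approx0$; for our sequence, with $\mu_k=T_k-1$, it holds at $(\ve_k,\mu_k,s_k)$.

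Finally I would extract the conclusion. Since $\Phi$ is bounded, the displayed relation forces $|\mu_k|\le C|\ve_k|+C(|\ve_k|+|\mu_k|)^2$, hence $|\mu_k|\le C'|\ve_k|$ for large $k$; in particular $\tau_k:=\mu_k/\ve_k$ stays bounded, $(|\ve_k|+|\mu_k|)^2\le C''\ve_k^2$, and dividing the relation by $\ve_k$ gives $\tau_k-\Phi(\vp_*+s_k)\to0$. To replace $\vp_*+s_k$ by the minimizer $\vp_k$: from $\|u_k-S_{\vp_*+s_k}u^0\|_\infty=\|w_k\|_\infty\to0$ and $\|u_k-S_{\vp_k}u^0\|_\infty\le\|u_k-S_{\vp_*+s_k}u^0\|_\infty$ one gets $\|S_{\vp_k}u^0-S_{\vp_*+s_k}u^0\|_\infty\to0$; since $u^0$ is non-constant in $t$ (by \reff{keradsys}) --- and is a genuine $1$-periodic (e.g. limit-cycle) solution, not one of smaller period --- the map $\vp\in\R/\Z\mapsto S_\vp u^0$ is a homeomorphism onto its image, so $\vp_k-(\vp_*+s_k)\to0$ in $\R/\Z$ and therefore $\Phi(\vp_k)-\Phi(\vp_*+s_k)\to0$ by uniform continuity of $\Phi$. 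Adding, $\Phi(\vp_k)-(T_k-1)/\ve_k\to0$, which is the first assertion. For the ``in particular'' part I would assume in addition $\vp_k\to\vp_0$ with $\Phi'(\vp_0)\ne0$ and (to match the triangle $K$) $\ve_k>0$, put $\tau_0=\Phi(\vp_0)$, note $\|u_k-S_{\vp_0}u^0\|_\infty\le\|u_k-S_{\vp_k}u^0\|_\infty+\|S_{\vp_k}u^0-S_{\vp_0}u^0\|_\infty\to0$ and $\tau_k\to\tau_0$, $\ve_k\to0$, so that $(\ve_k,T_k)\in K(\ve_0,\tau_0)$ for large $k$, and conclude $u_k=u_{\ve_k,T_k}$ from the local uniqueness in Theorem~\ref{sys1}(i). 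The main difficulty, apart from importing the Fredholm/reduction machinery uniformly in the chart parameter $s$, is that the hypotheses control $\ve_k$ and $T_k-1$ only separately, so $(T_k-1)/\ve_k$ is not bounded a priori; this boundedness has to be read off from the scalar equation itself (using boundedness of $\Phi$ and $T_k\to1$) before one can pass to the limit, and one must also carefully match the $\|\cdot\|_\infty$-minimizer $\vp_k$ with the tube coordinate $\vp_*+s_k$.
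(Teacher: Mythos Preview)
Your approach is correct and runs parallel to the paper's, but the execution differs in two respects worth noting. First, the paper does not read off the boundedness of $(T_k-1)/\ve_k$ from the scalar equation; instead it invokes the a~priori estimate (Lemma~\ref{apri}), which already gives $|T_k-1|+\|v_k\|_\infty\le c\,\ve_k$ directly from the abstract equation \reff{abstract1}, so that the scaling $T_k=1+\ve_k\tau_k$, $v_k=\ve_k w_k$ with bounded $\tau_k,w_k$ is available before projecting. One then simply applies the functional $\phi$ to $F(\ve_k,\tau_k,\vp_k,w_k)=0$ (cf.\ \reff{abstract2}) and uses the identities \reff{ortho} and \reff{Phieq} together with the strong continuity \reff{conti} to conclude $\Phi(\vp_k)-\tau_k\to0$; no implicit-function step solving for $w$ is needed. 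Second, the paper works throughout in the partial integral equation framework \reff{abstract}, not with the differential operator $L_s$ on $C^1$-functions; this is precisely where the Fredholm property and the uniform-in-$T$ bounds are established, and your phrase ``invoke the Fredholm theory from the proof of Theorem~\ref{sys1}'' should be read as importing that framework. Your extra hypothesis that $u^0$ have minimal period~$1$ is not needed: if the minimal period is $1/n$ then the coefficients of \reff{adsys} and hence $u^*$ are $1/n$-periodic, so $\Phi$ itself is $1/n$-periodic and the matching $\vp_k-(\vp_*+s_k)\to0$ modulo $1/n$ still yields $\Phi(\vp_k)-\Phi(\vp_*+s_k)\to0$. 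On the other hand, your explicit matching of the minimiser $\vp_k$ with the tubular phase is a point the paper's short proof treats implicitly by using the same symbol for both.
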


\subsection{Results for second-order equations}
\label{Equations}

If we introduce a new scaled time and new scaled unknown functions $u:\R\times [0,1]\to \R$ as in Subsection \ref{Systems},
then the problem of time-periodic solutions with period $T$ to \reff{eqold} is transformed into the following one (with $(t,x) \in \R \times [0,1])$):
\be
\label{eq}
\left.
\begin{array}{l}
\displaystyle\frac{1}{T^2}\partial_t^2u-a(x)^2\partial_x^2u+b(x,u,\partial_tu/T,\partial_xu)=\ve f(t,x),\\
u(t,0)=\ve g_1(t),\;
\partial_xu(t,1)+\gamma u(t,1)=\ve g_2(t),\\
u(t+1,x)=u(t,x).
\end{array}
\right\}
\ee
Concerning the data of problem \reff{eq} we suppose that
\be
\label{aass1}
\left.
\begin{array}{l}
a \in C^2([0,1]),\; b \in C^3([0,1]\times \R^3),\; \gamma \in \R, \;
a(x)\not=0 \mbox{ for all } x \in [0,1], \\
f \in C^1(\R \times [0,1]), \;g_j \in C^1(\R),\\
f(t+1,x)=f(t,x) \mbox{ and } g_j(t+1)=g_j(t)\mbox{ for all } (t,x) \in \R \times [0,1].
\end{array}
\right\}
\ee
Speaking about solutions to \reff{eq} (or its linearizations), we mean classical solutions again, i.e. $C^2$-functions $u:\R\times [0,1]\to \R$.
We suppose that
\be
\label{unull1}
\mbox{there exists a solution $u^0$ to \reff{eq} with $\ve=0$
and $T=1$.}
\ee
Further, for $t\in \R$ and $x,y \in [0,1]$, we write
\be
\label{bjk1}
\al(x,y):=\int_x^y\frac{dz}{a(z)},\;
b_{j}(t,x):=\partial_jb(x,u^0(t,x),\partial_t u^0(t,x),
\partial_x u^0(t,x)) \mbox{ for } j=2,3,4,
\ee\mbox
where $\partial_jb$ is the partial derivative of the function $b$ with respect to its $j$th variable, i.e.
$\partial_2b$ is the derivative of the function $b$ with respect to the place of $u$,
$\partial_3b$ is the derivative of the function $b$ with respect to the place of $ \partial_tu/T$, and $\partial_4b$ is the derivative of the function $b$ with respect to the place of $\partial_x u$. 
We also write
\be
\label{betadef}
\left.
  \begin{array}{rcl}
\beta^0_1(t,x)&:=&\displaystyle\frac{1}{2}\left(a'(x)+b_3(t,x)+\frac{b_4(t,x)}{a(x)}\right),\;\\
\beta^0_2(t,x)&:=&\displaystyle\frac{1}{2}\left(-a'(x)+b_3(t,x)-\frac{b_4(t,x)}{a(x)}\right),
  \end{array}
  \right\}
  \ee
and we consider the conditions
\be
\label{nonreseq}
 \int_0^1\frac{\beta^0_{1}
(t+\al(x,1),x)+
\beta^0_{2}
(t-\al(x,1),x)}{a(x)}\,dx\not=0\;
\mbox{ for all } t \in \R
\ee
and
\be
\label{nonreseq1}
 \int_0^1\frac{\beta^0_{1}
(t-\al(0,x),x)+
\beta^0_{2}
(t+\al(0,x),x)}{a(x)}\,dx \not=0\;
\mbox{ for all } t \in \R.
\ee
In Subsection \ref{Transfo} we will show that, if one of the conditions \reff{nonreseq} and \reff{nonreseq1} is satisfied, then 
$u^0$ is not only $C^2$-smooth, but even 
the third partial derivatives $\de_t^3u^0$ and
$\de_t\de_x^2u^0$ exist and are continuous. Therefore, the function $u=\partial_tu^0$ solves the linear homogeneous problem
\be 
\label{lineq}
\left.
\begin{array}{l}
\partial^2_tu-a(x)^2\partial^2_xu+
b_{2}(t,x)u+b_{3}(t,x)\partial_tu
+b_{4}(t,x)\partial_xu
=0, \\
u(t,0)=0,\;
\partial_xu(t,1)+\gamma u(t,1)=0,\\
u(t+1,x)=u(t,x).
\end{array}
\right\}
\ee
We assume that
\be
\label{kereq}
\mbox{For any solution $u$ to \reff{lineq} there exists a constant $c\in \R$ such that $u=c \partial_tu^0$.}
\ee
Further, we consider the linear homogeneous problem adjoint to \reff{lineq}, namely
\be
\label{adeq}
\left.
\begin{array}{l}
\partial^2_tu-\partial^2_x(a(x)^2u)+
b_{2}(t,x)u-\partial_t(b_{3}(t,x)u)-\partial_x(b_{4}(t,x)u)
=0,\\
u(t,0)=0,\;
(a(1)(2a'(1)+\gamma a(1))+b_4(t,1))u(t,1)+a(1)^2\partial_xu(t,1)=0,\\
u(t+1,x)=u(t,x).
\end{array}
\right\}
\ee
We suppose that there exists a solution $u=u^*$ to \reff{adeq} such that
\be
\label{keradeq}
\int_0^1\int_0^1
\left(2\partial^2_tu^0(t,x)+b_3(t,x)\de_tu^0(t,x)\right) u^*(t,x)\, dtdx=1.
\ee
Remark that, if  neither \reff{nonreseq} nor \reff{nonreseq1} are satisfied, then it may happen that $\partial_t^3u^0$ does not exist  (cf. Remark \ref{countereq}).

In order to formulate our results concerning problem \reff{eq}, we introduce  further notation. We will work with the 1-periodic function $\Phi:\R\to\R$, which is defined by
\begin{eqnarray}
\label{Phidefeq}
&&\Phi(\vp):=
-\int_0^1\int_0^1f(t-\vp,x)u^*(t,x)\,dtdx
\nonumber\\
&&-\int_0^1\left(a(1)^2g_2(t-\vp)u^*(t,1)+
a(0)^2g_1(t-\vp)\de_xu^*(t,0)\right)dt.
\end{eqnarray}
And again, we will work with the maximum norm
$\|u\|_\infty:=\max\{|u(t,x)|:\; (t,x) \in \R\times [0,1]\}$
for continuous functions $u:\R\times [0,1] \to \R$, which are periodic with respect to $t$, and with shift operators $S_\vp$, which work on those functions as
$[S_\vp u](t,x):=u(t+\vp,x).$

The following two theorems are similar to Theorems \ref{sys1} and \ref{sys2}.

\begin{theorem}
\label{eq1}
Suppose \reff{aass1}, \reff{unull1}, 
\reff{kereq} and \reff{keradeq}, and assume  that  one of  the conditions \reff{nonreseq} and
\reff{nonreseq1} is satisfied.
Let $\vp_0,\tau_0 \in \R$ be given such that
$$
\Phi(\vp_0)=\tau_0 \mbox{ and } \Phi'(\vp_0)\not=0.
$$
Then the following is true:

(i)  {\bf existence and local uniqueness: }
There exist $\ve_0 >0$ and $\delta>0$ such that for all   $(\ve,T) \in K(\ve_0,\tau_0)$ there exists a unique solution $u=u_{\ve,T}$ 
to \reff{eq} with  
$\|u-S_{\vp_0}u^0\|_\infty<\delta$.

(ii) {\bf continuous dependence: } The map  $(\ve,T) \in K(\ve_0,\tau_0) \mapsto u_{\ve,T}$ 
is continuous with respect to $\|\cdot\|_\infty$

(iii)  {\bf asymptotic behavior: } We have
$$
\sup_{(\ve,T) \in K(\ve_0,\tau_0)}\frac{1}{\ve}\;\inf_{\vp \in \R}\|u_{\ve,T}-S_\vp u^0\|_\infty<\infty.
$$

(iv) {\bf asymptotic phases: } There exists a continuous function $\tau \in (\tau_0-\ve_0,\tau_0+\ve_0) \mapsto
\vp_\tau \in \R$ such that  $\vp_{\tau_0}=\vp_0$ and
$$
\Phi(\varphi_\tau)=\tau \mbox{ and }
\lim_{\ve \to 0}\left\|[u_{\ve,T}-S_\vp u^0]_{T=1+\ve \tau,\vp=\vp_\tau}\right\|_\infty=0 \mbox{ for all } \tau \in (\tau_0-\ve_0,\tau_0+\ve_0).
$$
\end{theorem}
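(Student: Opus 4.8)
The plan is to follow the equivariant-bifurcation machinery outlined in Remark \ref{abstractequi}, adapting to the second-order setting the scheme already used for Theorem \ref{sys1}. First I would rewrite problem \reff{eq} as a fixed-point/operator equation. Using integration along the two characteristic families $t \mapsto t \pm \al(0,x)$, the second-order equation is reduced to a system of partial integral equations for $u$, $\partial_t u$ and $\partial_x u$, in which the $T$-periodicity in $t$ is built in and the boundary conditions $u(t,0)=\ve g_1$, $\partial_xu(t,1)+\gamma u(t,1)=\ve g_2$ are incorporated. One of the non-resonance conditions \reff{nonreseq}, \reff{nonreseq1} guarantees (as asserted in Subsection \ref{Transfo}) that the relevant linear operators are invertible resp. Fredholm of index zero on the appropriate spaces of $1$-periodic $C^1$-in-$t$ functions, and in particular that $u^0$ is smooth enough that $\partial_t u^0$ solves \reff{lineq} and the adjoint problem \reff{adeq} is well posed.

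Next I would set up the problem near the group orbit $\{S_\vp u^0:\vp\in\R\}$. Write $T=1+\ve\tau$, introduce the splitting $u = S_\vp u^0 + w$ with a transversality (gauge) condition, e.g. $\sum\!\int\!\int (2\partial_t^2 u^0 + b_3 \partial_t u^0)(t+\vp,x)\, w(t+\vp,x)\,dt\,dx = 0$, chosen so that the linearization in $u^0$ restricted to the complement of $\mathrm{span}\{\partial_t u^0\}$ is an isomorphism (this uses \reff{kereq}, \reff{keradeq}: zero is an algebraically simple eigenvalue). Substituting into the operator equation and projecting with the Lyapunov–Schmidt decomposition along the one-dimensional kernel/cokernel spanned by $\partial_t u^0$ and $u^*$ respectively, one obtains: (a) a "regular" equation for $w$ in the complement, solvable by the implicit function theorem to give $w = w(\ve,\tau,\vp)$ with $w = O(\ve)$; and (b) a scalar bifurcation equation $F(\ve,\tau,\vp)=0$. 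The key computation is that, after dividing by $\ve$, $\frac{1}{\ve}F(\ve,\tau,\vp) \to \Phi(\vp) - \tau$ as $\ve\to 0$; the limit is exactly the pairing of the forcing terms $f$, $g_1$, $g_2$ (and the $\tau$-derivative of the principal part $\frac1{T^2}\partial_t^2 u$, which produces the normalization in \reff{keradeq}) against the adjoint solution $u^*$, which is how \reff{Phidefeq} arises, including the boundary contributions $a(1)^2 g_2 u^*(t,1) + a(0)^2 g_1 \partial_x u^*(t,0)$ coming from the adjoint boundary terms in \reff{adeq}.

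Finally I would apply the implicit function theorem to the rescaled equation $G(\ve,\tau,\vp):=\frac{1}{\ve}F(\ve,\tau,\vp)=0$ near $(\ve,\tau,\vp)=(0,\tau_0,\vp_0)$: the hypothesis $\Phi(\vp_0)=\tau_0$ gives $G(0,\tau_0,\vp_0)=0$, and $\partial_\vp G(0,\tau_0,\vp_0)=\Phi'(\vp_0)\neq 0$ gives solvability for $\vp=\vp_\tau(\ve)$, yielding (i) existence and local uniqueness (local uniqueness in the $\|\cdot\|_\infty$-ball follows by combining the IFT uniqueness in the new coordinates with the fact that $\delta$ small forces $w$ into the regular branch), (ii) continuity in $(\ve,T)$, (iii) the bound $\inf_\vp\|u_{\ve,T}-S_\vp u^0\|_\infty = O(\ve)$ from $w=O(\ve)$, and (iv) the asymptotic phases, where $\vp_\tau := \lim_{\ve\to 0}\vp_\tau(\ve)$ satisfies $\Phi(\vp_\tau)=\tau$ by passing to the limit in $G=0$. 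The main obstacle is step one together with the smoothness claims of Subsection \ref{Transfo}: showing that the second-order hyperbolic problem in one space dimension, once reduced along characteristics, gives linear operators that are Fredholm of index zero in the $C^1$-in-time periodic setting, and that the non-resonance condition \reff{nonreseq} (resp. \reff{nonreseq1}) is precisely what bounds the small divisors away from zero and upgrades the regularity of $u^0$ to $\partial_t^3 u^0, \partial_t\partial_x^2 u^0 \in C$ — this has no analogue for ODEs or parabolic PDEs and is where essentially all the difficulty of the hyperbolic case is concentrated; the subsequent Lyapunov–Schmidt/IFT part is then parallel to the treatment of Theorem \ref{sys1}.
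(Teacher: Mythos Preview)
Your outline is essentially correct and follows the same equivariant Lyapunov--Schmidt scheme as the paper. The one implementation difference worth noting is that, rather than integrating the second-order equation directly along its two characteristic families, the paper first passes to the first-order system \reff{FOS} in the Riemann-type variables $v_1=\partial_tu/T+a\partial_xu$, $v_2=\partial_tu/T-a\partial_xu$ (Lemma \ref{lem:FOS}); this allows the Fredholmness, tubular-neighborhood, a priori estimate, and IFT steps to be copied almost verbatim from Section \ref{proofssys}, at the cost of an extra translation lemma (Lemma \ref{vstern}) identifying the first-order adjoint $v^*$ with $u^*$ and $\tilde u$ from \reff{tildeu}, which is how the boundary terms in \reff{Phidefeq} are recovered. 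Your transversality condition is slightly off: the complement $V$ is defined by orthogonality to the adjoint $v^*$ (equivalently to $u^*$ after translation), not to $2\partial_t^2u^0+b_3\partial_tu^0$; the latter expression appears only in the normalization \reff{keradeq}.
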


\begin{theorem}
\label{eq2}
Suppose \reff{aass1}--\reff{unull1}, 
\reff{kereq} and \reff{keradeq}, and assume  that one of the conditions \reff{nonreseq} and
\reff{nonreseq1} is satisfied.
Let
$(\ve_k,T_k,u_k)$, $k \in \N$, be a sequence
of solutions to \reff{eq} such that
$$
\lim_{k \to \infty}\left(|\ve_k|+|\om_k-1|+\inf_{\vp \in \R}\|u_k-S_\vp u^0\|_\infty\right)=0,
$$
and let $\vp_k \in [0,1]$, $k \in \N$, be a sequence such that
$\inf_{\vp \in \R}\|u_k-S_\vp u^0\|_\infty=
\|u_k-S_{\vp_k} u^0\|_\infty$.
Then
$$
\lim_{k \to \infty} \left(\Phi(\vp_k)-\frac{T_{k}-1}{\ve_{k}}\right)=0.
$$
In particular, if there exist 
$\vp_0 \in \R$ such that $\vp_k \to \vp_0$ for $k \to \infty$, and if $\Phi'(\vp_0)\not=0$, then for large $k$ we have
$u_k=u_{\ve_k,T_k}$,
where $u_{\ve,T}$ is the family of solutions to \reff{eq}, described by Theorem \ref{eq1}, corresponding to $\vp_0$ and $\tau_0=\Phi(\vp_0)$. 
\end{theorem}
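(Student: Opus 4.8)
The plan is to run the same argument used for Theorem \ref{sys2}, now in the second-order setting of \reff{eq}; indeed the excerpt already announces that Theorems \ref{eq1}--\ref{eq2} are proved ``similarly'' to the first-order pair, so the strategy is to transfer the scheme and check that the second-order objects (the adjoint problem \reff{adeq}, the bilinear pairing in \reff{keradeq}, and the phase function \reff{Phidefeq}) play the roles that \reff{adsys}, \reff{keradsys} and \reff{Phidef} played before. Concretely, I would first fix the sequence $(\ve_k,T_k,u_k)$ and the minimizing phases $\vp_k\in[0,1]$ as in the statement. Passing to a subsequence if necessary, $\vp_k\to\vp_\infty$ for some $\vp_\infty\in[0,1]$ by compactness; also $u_k\to S_{\vp_\infty}u^0$ in $\|\cdot\|_\infty$. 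The heart of the matter is to expand the equation \reff{eq} satisfied by $u_k$ around the shifted unforced solution $S_{\vp_k}u^0$, writing $u_k=S_{\vp_k}u^0+v_k$ with $\|v_k\|_\infty\to0$, and to extract the solvability (Fredholm) condition that the linearized second-order operator imposes. Because the linearization of \reff{eq} at $u^0$ with $T=1$ is exactly the operator underlying \reff{lineq}, which by \reff{kereq} and \reff{keradeq} is Fredholm of index zero with one-dimensional kernel and cokernel spanned (in the dual sense) by $u^*$, the right-hand side generated by the $O(\ve_k)$ forcing and the $O(T_k-1)$ detuning must be $L^2$-orthogonal to $u^*$ up to $o(\ve_k)$.

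The key steps, in order. First I would record the three sources of inhomogeneity when \reff{eq} is linearized at $S_{\vp_k}u^0$: (a) the forcing terms $\ve_k f$, $\ve_k g_1$, $\ve_k g_2$; (b) the detuning terms coming from $\frac{1}{T_k^2}\partial_t^2-\partial_t^2 = (T_k^{-2}-1)\partial_t^2$ and from $b(x,u,\partial_tu/T_k,\partial_xu)-b(x,u,\partial_tu,\partial_xu)$, which to leading order contribute $-(T_k-1)\bigl(2\partial_t^2 u^0 + b_3\,\partial_t u^0\bigr)\circ S_{\vp_k}$ — precisely the expression appearing in \reff{keradeq}; (c) the quadratic-and-higher remainder $O(\|v_k\|_\infty^2)$, controlled by the $C^3$-smoothness of $b$ in \reff{aass1}. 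Second, I would apply the Fredholm alternative for the linear periodic second-order hyperbolic problem \reff{lineq}: solvability of the inhomogeneous problem requires the $L^2$-pairing of the full right-hand side (interior plus boundary contributions) with the adjoint eigenfunction $u^*$ of \reff{adeq} to vanish. Integration by parts, using exactly the boundary conditions built into \reff{adeq} — here the boundary terms $u(t,0)=\ve g_1$ and $\partial_x u(t,1)+\gamma u(t,1)=\ve g_2$ interact with the adjoint boundary conditions to produce the $g_1$- and $g_2$-terms of \reff{Phidefeq} with the coefficients $a(0)^2\partial_x u^*(t,0)$ and $a(1)^2u^*(t,1)$ — yields the identity
$$
\ve_k\,\Phi(\vp_k) \;=\; (T_k-1)\cdot 1 \;+\; o(\ve_k)+o(|T_k-1|),
$$
using the normalization \reff{keradeq} for the coefficient of $(T_k-1)$ and the definition \reff{Phidefeq} for the coefficient of $\ve_k$. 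Third, dividing by $\ve_k$ (legitimate since $\ve_k\neq0$ along solutions of \reff{eq} with $\ve=0$ excluded — if $\ve_k=0$ then by local uniqueness $u_k$ is a shift of $u^0$, forcing $\Phi(\vp_k)=0=(T_k-1)/\ve_k$ trivially, so that case is handled separately) and letting $k\to\infty$ gives $\Phi(\vp_k)-\frac{T_k-1}{\ve_k}\to0$. Finally, for the ``in particular'' clause: if $\vp_k\to\vp_0$ with $\Phi'(\vp_0)\neq0$, then $\tau_0:=\Phi(\vp_0)$ satisfies the hypotheses of Theorem \ref{eq1}, so for large $k$ the pair $(\ve_k,T_k)$ lies in the triangle $K(\ve_0,\tau_0)$ (because $\frac{T_k-1}{\ve_k}\to\tau_0$ by the displayed limit and $\ve_k\to0$), and $u_k$ lies in the $\delta$-ball around $S_{\vp_0}u^0$ (because $\|u_k-S_{\vp_k}u^0\|_\infty\to0$ and $S_{\vp_k}u^0\to S_{\vp_0}u^0$); the uniqueness part of Theorem \ref{eq1}(i) then forces $u_k=u_{\ve_k,T_k}$.

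The main obstacle I expect is making step two — the integration-by-parts that produces the exact solvability condition — fully rigorous at the regularity available: $u^0$ is only guaranteed to have $\partial_t^3 u^0$ and $\partial_t\partial_x^2 u^0$ continuous (from the non-resonance condition \reff{nonreseq} or \reff{nonreseq1}), while $u^*$ solves an adjoint hyperbolic problem whose solutions need not be smooth enough for naive integration by parts. The resolution, as in the first-order case, is to pass through the equivalent formulation as partial integral equations obtained by integrating along characteristics (invoked in the excerpt via \cite[Corollary 4.11]{KR4} and \cite[Theorem 1.2]{second}), where the Fredholm property and the duality pairing are established in the correct function spaces; the weak form of the solvability condition then translates back into the boundary-and-interior integral in \reff{Phidefeq}. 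A secondary technical point is uniformity of the remainder estimate (c) across the sequence, i.e. that the $O(\|v_k\|_\infty^2)$ bound has a constant independent of $k$; this follows from the $C^3$-bound on $b$ on the compact set traced out by $\{(x,u^0(t,x),\partial_tu^0(t,x),\partial_xu^0(t,x))\}$ together with the uniform smallness of $v_k$, but it must be stated carefully because the detuning also perturbs the third argument of $b$ by $\partial_t u_k(1/T_k-1)$, which is itself only $O(|T_k-1|)$ and not smaller.
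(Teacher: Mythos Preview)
Your proposal captures the heuristic correctly --- it is essentially the formal calculation the paper carries out in Subsection \ref{formal} --- but it differs from the paper's rigorous route and has one genuine gap.

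\textbf{What the paper does.} The paper does \emph{not} prove Theorem \ref{eq2} by working directly with the second-order problem \reff{eq}. Instead it first transforms \reff{eq} into the first-order integro-differential system \reff{FOS} via \reff{trafo} (Lemma \ref{lem:FOS}), rewrites that as the abstract partial integral equation \reff{abstracteq}, and then rebuilds the entire machinery of Section \ref{proofssys} for this system: Fredholmness (Lemma \ref{Fredholmeq}), kernel/image description via the adjoint system \reff{U} and the functional $\phi$ in \reff{phidefeq}, the connection between $v^*$ and $u^*$ (Lemma \ref{vstern}), and the phase identity \reff{a}. After all that, the proof of Theorem \ref{eq2} is literally one line: ``Theorem \ref{eq2} follows directly from Theorem \ref{sys2}'', meaning that the argument of Subsection \ref{Proofsys2} --- apply the functional $\phi$ to $F(\ve_k,\tau_k,\vp_k,w_k)=0$ and use \reff{besch}, \reff{conti} --- carries over verbatim to \reff{abstracteq}.

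\textbf{The gap in your argument.} Your expansion produces
\[
\ve_k\,\Phi(\vp_k) \;=\; (T_k-1) \;+\; O(\|v_k\|_\infty^2) \;+\; (\mbox{mixed terms}),
\]
and you want to divide by $\ve_k$ and let $k\to\infty$. But you only know $\|v_k\|_\infty\to 0$, not $\|v_k\|_\infty=O(\ve_k)$; so $\|v_k\|_\infty^2/\ve_k$ need not tend to zero, and likewise you have no a priori control on $|T_k-1|/\ve_k$. In the paper this is exactly the role of the a priori estimate Lemma \ref{apri} (and its unstated second-order analogue): it shows $|T_k-1|+\|v_k\|_\infty\le c\,\ve_k$, which legitimizes the scaling $T_k=1+\ve_k\tau_k$, $v_k=\ve_k w_k$ with bounded $\tau_k,w_k$, after which the remainder is manifestly $O(\ve_k)$ and vanishes upon division. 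You correctly flag the regularity obstacle and point toward the partial integral equation framework, but within that framework the crucial ingredient you are missing is not the integration by parts --- it is this a priori bound, whose proof (see Lemma \ref{apri}) is nontrivial and uses the Fredholm structure together with a compactness/diagonal-subsequence argument.
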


\begin{remark}
\label{remad}\rm
For all $u,v \in C_{per}^2(\R\times[0,1];\R)$ with $u$ satisfying the boundary conditions in \reff{lineq}, we have
\begin{eqnarray*}
&&\int_0^1\int_0^1\left(\de_t^2u-a^2\de_x^2u+b_2u+b_3\de_tu+b_4\de_xu\right)v\,dtdx\\
&&-\int_0^1\int_0^1\left(\de_t^2v-\de_x^2(a^2v)+b_2v-\de_t(b_3v)-\de_x(b_4v)\right)u\,dtdx\\
&&=\int_0^1\left[-a^2\de_xuv+(2aa'v+a^2\de_xv)u+b_4uv\right]_{x=0}^{x=1}\,dt\\
&&=\int_0^1\left(\left[((\gamma a^2
+2aa'+b_4)v+a^2\de_xv)u\right]_{x=1}+\left[a^2v\de_xu\right]_{x=0}\right)\,dt.
\end{eqnarray*}
Hence, if $\int_0^1\int_0^1\left(\de_t^2u-a^2\de_x^2u+b_2u+b_3\de_tu+b_4\de_xu\right)
v\,dtdx=0$
for certain $v$ and all $u$, then $v$ satisfies \reff{adeq}. This way one gets the structure of the adjoint boundary value problem \reff{adeq}.
\end{remark}

\subsection{Further remarks}
\label{Remarks}

First we formulate some remarks concerning problem \reff{sys}, but could be stated similarly for problem \reff{eq}.

\begin{remark}
\label{rel}\rm
If $(\ve,T) \in K(\ve_0,\tau_0)$, then $T=1+\ve \tau$ with $\ve \in (0,\ve_0)$ and $\tau \in (\tau_0-\ve_0,\tau_0+\ve_0)$, i.e.
$$
\tau=\frac{T-1}{\ve}
$$
is a scaled period deviation parameter. Hence, the so-called {\bf phase equation} $\Phi(\vp)=\tau$ describes the relationship between the scaled 
period deviation $\tau$ and the corresponding asymptotic phase $\vp=\vp_\tau$
(cf. \reff{asphase}) of the solution family $u_{\ve,T}$. More precisely: The phase equation answers, asymptotically for $\ve \to 0$, the main question of forced frequency locking, i.e. which phase shifts of $u^0$ (described by $\vp$) survive under which forcing (described by $\tau$). Remark that the phase equation, i.e. the function $\Phi$, depends surprisingly explicitly on the shapes of the forcing, i.e.
on the functions $f_j$ and $g_j$. The only data, which are needed for the computation of $\Phi$, but which are not explicitly given, is the solution $u^*$ to the adjoint linearized problem \reff{adsys}. There exists a huge literature about the question, how to compute (numerically or, sometimes, analytically) the solution to the adjoint linearized problem (in the ODE case $u^*$ is a time-periodic vector function, often called perturbation projection vector).
\end{remark}

\begin{remark}
\label{stability}\rm
We expect that the phase equation  $\Phi(\vp)=\tau$ describes not only  the relationship between the scaled period deviation $\tau=(T-1)/\ve$ and the corresponding 
asymptotic phase $\vp$, but also the stability of the locked periodic solutions  $u_{\ve,T}$. More precisely, we expect the following to be true: Let the assumptions of Theorem \ref{sys1} be satisfied, and let $u^0$ be  exponentially orbitally stable as a periodic solution to
$$
\begin{array}{l}
\partial_tu_j(t,x)+a_j(x)\partial_xu_j(t,x)+b_j(x,u(t,x))=0, \;j=1,2,\\
u_1(t,0)=r_1u_2(t,0),\;
u_2(t,1)=r_2u_1(t,1).
\end{array}
$$
Then for all $(\ve,T) \in K(\ve_0,\tau_0)$ with sufficiently small $\ve_0$ the periodic solution $u_{\ve,T}$ to
$$
\begin{array}{l}
\displaystyle \frac{1}{T}\partial_tu_j(t,x)+a_j(x)\partial_xu_j(t,x)+b_j(x,u(t,x))=\ve f_j(t,x), \;j=1,2,\\
u_1(t,0)=r_1u_2(t,0)+\ve g_1(t),\;
u_2(t,1)=r_2u_1(t,1)+\ve g_2(t)
\end{array}
$$
is exponentially stable if
$\Phi'(\vp_0)>0$ and unstable if $\Phi'(\vp_0)<0$. 
For ODEs those results are well-known, cf., e.g., \cite[Theorem 3]{L59} or \cite[Theorem 5.1]{RP96}. 
\end{remark}

\begin{remark}
\label{rel1}\rm
If there exist $\vp_0,\tau_0 \in \R$ with 
\reff{beta}, i.e.
$\Phi(\vp_0)=\tau_0$  and $\Phi'(\vp_0)\not=0$,
then there exist intervals $[\vp_-,\vp_+]$ and 
$[\tau_-,\tau_+]$ such that
\be 
\label{beta1}
\left.
\begin{array}{l}
\vp_-<\vp_0<\vp_+,\; 
\tau_-<\tau_0<\tau_+,\;
\Phi([\vp_-,\vp_+])=[\tau_-,\tau_+],\\
\Phi'(\vp) \not=0
\mbox{ for all } \vp \in [\vp_-,\vp_+],
\end{array}
\right\}
\ee
and Theorem \ref{sys1} works in any $\vp_0
\in [\vp_-,\vp_+]$ with corresponding
$\tau_0=\Phi(\vp_0)$. Moreover, the proof of Theorem \ref{sys1} shows that the constants $\ve_0$ and $\delta$ from assertion $(i)$ of Theorem \ref{sys1} 
may be chosen uniformly with respect to $\vp_0
\in [\vp_-,\vp_+]$. Hence, Theorem \ref{sys1}
can be generalized as follows: 

Suppose \reff{aass}--\reff{unull}, 
\reff{kersys} and \reff{keradsys}, and assume that  one of the conditions \reff{nonressys} and
\reff{nonressys1} is satisfied.
Let
intervals $[\vp_-,\vp_+]$ and 
$[\tau_-,\tau_+]$ be given with \reff{beta1}.
Then the following is true:

$(i)$  There exist $\ve_0,\delta>0$ such that for all   $\ve \in (0,\ve_0)$ and all 
$T>0$  with
$\ve\tau_- \le T-1 \le \ve\tau_+$
there exists a unique solution $u=u_{\ve,T}$ 
to \reff{sys} with  
$\inf \{\|u-S_{\vp}u^0\|_\infty: \,\vp \in [\vp_-,\vp_+]\}<\delta$.

$(ii)$ The map  $(\ve,T)  \mapsto u_{\ve,T}$ 
is continuous with respect to $\|\cdot\|_\infty$

$(iii)$  We have
$$
\sup\left\{\frac{1}{\ve}\;\inf_{\vp \in \R}\|u_{\ve,T}-S_\vp u^0\|_\infty: \; \ve \in (0,\ve_0), \;\tau_- \le \frac{T-1}{\ve} \le\tau_+
\right\}<\infty.
$$

$(iv)$ Let $\vp=\vp_\tau \in [\vp_-,\vp_+]$ be the unique solution to the phase equation $\Phi(\vp)=\tau$
with $\tau \in [\tau_-,\tau_+]$.
Then
\be
\label{straight}
\lim_{\ve \to 0}\left\|[u_{\ve,T}-S_\vp u^0]_{T=1+\ve \tau,\vp=\vp_\tau}\right\|_\infty=0 \mbox{ for all } \tau \in [\tau_-,\tau_+].
\ee
\end{remark}

\begin{remark}
\label{rem}\rm
Assertion \reff{straight} claims that $u_{\ve,1+\ve\tau}$ tends uniformly to 
$S_{\vp_\tau}u^0$ 
for $\ve \to 0$,  
and the limit $S_{\vp_\tau}u^0$ 
depends on $\tau$, in general.
In other words: If the point $(\ve,T)$
tends to the point $(0,1)$ along a straight line, then $u_{\ve,T}$ converges uniformly.
But, in general, $u_{\ve,T}$ does not converge if $(\ve,T)$ tends to $(0,1)$ not along a straight line.
\end{remark}

\begin{remark}
\label{fnonl}\rm
It is easy to show that Theorems \ref{sys1} and \ref{sys2} can be generalized to problems of the type
$$
\begin{array}{l}
\displaystyle\frac{1}{T}\partial_tu_j(t,x)+a_j(x)\partial_xu_j(t,x)+b_j(x,u(t,x))=\ve f_j(t,x,u(t,x)), \;j=1,2,\\
u_1(t,0)=r_1u_2(t,0)+\ve g_1(t,u(t,0)),\;
u_2(t,1)=r_2u_1(t,1)+\ve g_2(t,u(t,1)),\\
u(t+1,x)=u(t,x).
\end{array}
$$
In the definition of the function $\Phi$ one has to replace $g_1(t)$, $g_2(t)$ and $f_j(t,x)$ by
$g_1(t,u^0(t,0))$, $g_2(t,u^0(t,1))$
and $f_j(t,x,u^0(t,x))$, respectively.
\end{remark}

\begin{remark}
\label{quasi}\rm
We believe that Theorems \ref{sys1} and \ref{sys2} can be generalized
to quasilinear systems of the type
$$
\displaystyle\frac{1}{T}\partial_tu_j(t,x)+a_j(x,u(t,x))\partial_xu_j(t,x)+b_j(x,u(t,x))=\ve f_j(t,x,u(t,x)), \;j=1,2,
$$
but probably the proofs will essentially be more difficult than in the semilinear case.
\end{remark}

\begin{remark}
\label{nn}\rm
Theorems \ref{sys1} and \ref{sys2} can be generalized to problems for $n \times n$ first-order hyperbolic systems of the type
(with natural numbers $m<n$)
\be 
\label{nnsys}
\left.
\begin{array}{l}
\displaystyle\frac{1}{T}\partial_tu_j(t,x)+a_j(x)\partial_xu_j(t,x)+b_j(x,u(t,x))=\ve f_j(t,x), \;j=1,\ldots,n,\\
\displaystyle
u_j(t,0)=\sum_{k=m+1}^nr_{jk}u_k(t,0)+\ve g_j(t),\; j=1,\ldots,m,\\
\displaystyle
u_j(t,1)=\sum_{k=1}^mr_{jk}u_k(t,1)+\ve g_j(t),\, j=m+1,\ldots,n,\\
u(t+1,x)=u(t,x),
\end{array}
\right\}
\ee
where for all $x \in [0,1]$ it is supposed $a_j(x)\not=0$ and $a_j(x)\not=a_k(x)$ for $j\not=k$.
Here conditions \reff{nonressys} and
\reff{nonressys1} are replaced by the conditions
$$
\max_{s,t \in [0,1]}
\max_{1 \le j \le m}\sum_{k=m+1}^n\sum_{l=1}^m|r_{jk}r_{kl}|\exp\int_0^1
\left(\frac{\partial_{u_k}b_k(x,u^0(t,x))}{a_k(x)}-
\frac{\partial_{u_j}b_j(x,u^0(s,x))}{a_j(x)}\right)\,dx<1
$$
and
$$
\max_{s,t \in [0,1]}
\max_{m+1 \le j \le n}\sum_{k=1}^m\sum_{l=m+1}^n|r_{jk}r_{kl}|\exp\int_0^1
\left(\frac{\partial_{u_j}b_j(x,u^0(t,x))}{a_j(x)}-
\frac{\partial_{u_k}b_k(x,u^0(s,x))}{a_k(x)}
\right)\,dx<1.
$$
If one of these two conditions is satisfied, then the linearization of the system of partial integral equations, corresponding to \reff{nnsys}, is Fredholm of index zero (cf. \cite{KR4}).
\end{remark}

Finally we formulate two remarks which concern generalizations of  the phenomenon of forced frequency locking.
\begin{remark}
\label{twofreq}\rm
If the unforced autonomous evolution equation is equivariant under an action of a compact Lie group and if the solution to the unforced equation is a rotating wave  (relative equilibrium) or even a modulated wave (relative periodic orbit) and if the forcing is also of rotating wave type or of modulated wave type, then again phenomena of forced frequency locking type may appear. For ODEs this is described, e.g. in \cite{C00,Re98,RP96,RS11,RS12,SR05}.
For applications of those phenomena to the behavior of optical devices (self-pulsating semiconductor lasers)  see \cite{BandSand,petsand,Rad}.
\end{remark}

\begin{remark}
\label{forcedHopf}\rm
In many applications of forced frequency locking the $T^0$-periodic solution to the unforced autonomous equation is born in a Hopf bifurcation from a stationary solution, and it is natural to ask how this Hopf bifurcation interacts with small $T$-periodic forcing for $T\approx T^0$. In ODE cases this is done, e.g. in \cite{Gam,Scheurle,Zhang}.
\end{remark}

The remaining part of this paper is organized as follows:

Section \ref{proofssys} concerns the first-order system \reff{sys}, and Section \ref{proofeq} the second-order equation \reff{eq}.

In Subsection \ref{Char} we show by means of integration along characteristics that first-oder PDE system \reff{sys} is equivalent to system \reff{parint} of partial integral equations, we discuss the advantages and disadvantages of \reff{sys} and \reff{parint},
and we write \reff{parint} as abstract nonlinear operator equation \reff{abstract} in the function space $C_{per}(\R\times [0,1];\R^2)$.

In Subsections \ref{Invertibility} and \ref{Fredholmness} we show that the
linearization in $\ve=0$, $T=1$ and $u=u^0$ of the abstract nonlinear operator equation \reff{abstract} is Fredholm of index zero in $C_{per}(\R\times [0,1];\R^2)$. In Subsection \ref{kerim} we describe the kernel and the image of this Fredholm operator. This is needed for the Liapunov-Schmidt-like procedure used later.

In Subsection \ref{Regularity} we use the abstract solution regularity result Theorem \ref{app1} of E. N. Dancer in order to show that the solution $u^0$ to the unforced problem has some additional regularity in time (and, hence, in space also).

In Subsection \ref{Ansatz} we introduce new coordinates in a tubular neighborhood of the set $O(u^0)$ of all phase shifts of $u^0$ according to the A. Vanderbauwhede's Theorem \ref{appl2}.

In Subsection \ref{Apriori}
we show that all solutions to
the abstract nonlinear operator equation \reff{abstract} with $\ve \approx 0$, $T\approx 1$ and $u \approx O(u^0)$ can be scaled as $|T-1|\sim \ve$ and dist\,$(u;O(u^0))\sim \ve$.
This is used in Subsection \ref{Proofsys1} in order to derive the scaled abstract nonlinear operator equation \reff{abstract2}. By means of this equation Theorem \ref{sys1} can be proved (using the Implicit Function Theorem 
\ref{app3}) as well as Theorem \ref{sys2}. In particular, formula \reff{Phidef} for the phase equation will be derived.

In Section \ref{proofeq} the rigorous way from problem \reff{eq} to  formula \reff{Phidefeq} of the phase equation is even one step longer: From the second-oder equation \reff{eq} to  first-order system \reff{FOS} (this is done in Subsection \ref{Transfo}), then to  
nonlinear operator equation \reff{abstracteq} and finally 
to scaled nonlinear operator equation \reff{epsnulleq1}. Therefore, in Subsection \ref{formal}
 we show  how formula \reff{Phidefeq} of the phase equation can be derived  directly from 
second-oder equation \reff{eq} by formal calculations.

Finally, in Appendices \ref{appendix1} and \ref{appendix2} we present Theorems \ref{app1}, \ref{appl2} and \ref{app3} from abstract  nonlinear analysis.

\section{Proofs for first-order systems}
\label{proofssys}
\setcounter{equation}{0}
\setcounter{theorem}{0}
In this section we will prove  Theorems \ref{sys1} and \ref{sys2}. Hence, we will suppose that all assumptions of these theorems are satisfied. For the sake of clearness, in all lemmas we will list the assumptions which are  used in the lemma.

We will work with the function space
\begin{eqnarray*}
&&C_{per}(\R\times[0,1];\R^2)\\
&&:=\{u \in C(\R\times[0,1];\R^2):\; u(t+1,x)=u(t,x) \mbox{ for all } (t,x) \in \R \times [0,1]\},
\end{eqnarray*}
which is equipped and complete with the maximum norm $\|\cdot\|_\infty$.

\subsection{Integration along characteristics}
\label{Char}
In this subsection we will show that  problem \reff{sys} is equivalent to a system of two partial integral equations. By tactical reasons, which will  be useful later, we add artificial linear terms $\beta_j(t,x)
u_j(t,x)$ to the left-hand and the right-hand sides of the PDEs in \reff{sys}, i.e.
\begin{eqnarray}
\label{arti}
&&\frac{1}{T}\partial_t u_j(t,x)+a_j(x)\partial_x u_j(t,x)+\beta_j(t,x)u_j(t,x)\nonumber\\
&&=\beta_j(t,x)u_j(t,x)-b_j(x,u(t,x))+\ve f_j(t,x).
\end{eqnarray}
We show that the differential operator on the left-hand side of \reff{arti} has a right inverse, which is a partial integral operator. This procedure is well-known as "integration along characteristics".

In order to simplify notation we introduce the nonlinear superposition operator $B$ from 
$C_{per}(\R\times[0,1];\R^2)^2$ into $C_{per}(\R\times[0,1];\R^2)$,
 defined by
\be
\label{Bdef}
[B_j(\beta,u)](t,x):=\beta_j(t,x)u_j(t,x)-b_j(x,u(t,x)).
\ee
Because of assumption $b_j \in C^3([0,1]\times \R^3)$ (cf. \reff{aass}), the superposition operator $B$ is $C^3$-smooth.
The right-hand side of \reff{arti} is the value of
the function $B_j(\beta,u)+\ve f_j$ in the point $(t,x)$. 

Further, for $j=1,2$, $t \in \R$, $x,y \in [0,1]$, $T>0$ and $\beta \in  
C_{per}(\R\times[0,1];\R^2)$, we write
\be
\label{aldef}
 c_j(t,x,y,T,\beta):=\exp\int_x^y
\frac{\beta_j(t+\al_j(x,z)/T,z)}{a_j(z)}\,dz,
\ee
where $\al_j(x,z)$ is defined in \reff{bjk}.
Then the system of partial integral equations, for $(t,x) \in \R \times [0,1]$), reads
\be
\label{parint}
\left.
\begin{array}{l}
\displaystyle u_1(t,x)-c_1(t,x,0,T,\beta)[r_1u_2(s,0)+\ve g_1(s)]_{s=t+\al_1(x,0)/T}\\
=\displaystyle \int_0^x\frac{c_1(t,x,y,T,\beta)}{a_1(y)}[B_1(\beta,u)+\ve f_1](t+\al_1(x,y)/T,y)\,dy,\\
\displaystyle u_2(t,x)-c_2(t,x,1,T,\beta)[r_2u_1(s,1)+\ve g_2(s)]_{s=t+\al_2(x,1)/T}\\
=\displaystyle -\int_x^1\frac{c_2(t,x,y,T,\beta)}{a_2(y)}[B_2(\beta,u)+\ve f_2](t+\al_2(x,y)/T,y)\,dy.
\end{array}
\right\}
\ee

\begin{lemma}
\label{equiv}
Suppose \reff{aass} and \reff{fass}. Then
for all $\ve>0$, $T>0$ and $\beta \in C_{per}(\R\times[0,1];\R^2)$ the following is true:

(i) Any solution to \reff{sys} is a solution to \reff{parint}.

(ii) If $u \in C_{per}(\R\times[0,1];\R^2)$ is a solution to \reff{parint} and if the partial derivatives $\partial_tu$ and $\partial_t\beta$ exist and are continuous, then $u$ is $C^1$-smooth and is a solution to \reff{sys}.
\end{lemma}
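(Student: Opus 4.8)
The plan is to prove the two directions separately, in both cases by the method of characteristics applied to the regularized equations \reff{arti}, exploiting the fact that the artificial terms $\beta_j(t,x)u_j(t,x)$ were inserted precisely so that the factor $c_j(t,x,y,T,\beta)$ defined in \reff{aldef} becomes an integrating factor along characteristics. For part (i), fix $j=1$ and a point $(t,x)\in\R\times[0,1]$, and consider the characteristic curve $y\mapsto(t+\al_1(x,y)/T,\,y)$ of the operator $\frac1T\partial_t+a_1(x)\partial_x$ through $(t,x)$. For a $C^1$-solution $u$ of \reff{sys}, put $v(y):=u_1(t+\al_1(x,y)/T,y)$; the chain rule together with \reff{arti} shows that $v$ satisfies the linear scalar ODE $a_1(y)v'(y)+\beta_1(t+\al_1(x,y)/T,y)v(y)=[B_1(\beta,u)+\ve f_1](t+\al_1(x,y)/T,y)$. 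Multiplying by the integrating factor $c_1(t,x,y,T,\beta)/a_1(y)$ (recall $c_1(t,x,x,T,\beta)=1$), integrating in $y$ from $0$ to $x$, and inserting the boundary condition $u_1(s,0)=r_1u_2(s,0)+\ve g_1(s)$ at $s=t+\al_1(x,0)/T$, one obtains the first equation of \reff{parint}. The second equation follows in exactly the same way, now with the characteristic run from $x=1$ down to $x$ (which produces the minus sign) and the boundary condition used at $x=1$.

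For part (ii), the first task is a regularity bootstrap, and this is the step I expect to be the main obstacle. Assuming $u\in C_{per}(\R\times[0,1];\R^2)$ solves \reff{parint} and that $\partial_tu$ and $\partial_t\beta$ exist and are continuous, I would differentiate the right-hand sides of \reff{parint} with respect to $x$. The point to check carefully is that differentiation under the integral sign is legitimate and that every term produced by the chain rule on the shifted arguments $t+\al_j(x,y)/T$ and inside $c_j(t,x,y,T,\beta)$ involves only derivatives already assumed continuous, namely $\partial_t\beta$ and $\partial_tu$, together with the prescribed smoothness $a_j\in C^1$ (so that $\al_j\in C^1$) and $b_j\in C^3$ — so that no hidden $\partial_xu$ is required before it is known to exist. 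This is exactly why the terms $\beta_ju_j$ were added symmetrically in \reff{arti}: it keeps $c_j$ and $B_j$ dependent only on the $t$-regularity of their data. Granting this, the right-hand sides of \reff{parint} are continuously differentiable in $x$, hence $\partial_xu$ exists and is continuous and $u\in C^1$.

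With $u\in C^1$ in hand, I would recover \reff{sys} by applying the transport operator $L_j:=\frac1T\partial_t+a_j(x)\partial_x$ to \reff{parint}. The two identities that make the computation collapse are $L_j\bigl(t+\al_j(x,y)/T\bigr)=0$ and $L_jc_j(t,x,y,T,\beta)=-\beta_j(t,x)\,c_j(t,x,y,T,\beta)$, both immediate from $\partial_x\al_j(x,z)=-1/a_j(x)$; differentiating the integral term in addition yields the endpoint contribution $a_j(x)^{-1}[B_j(\beta,u)+\ve f_j](t,x)$ coming from the variable limit $y=x$, using $c_j(t,x,x,T,\beta)=1$. Collecting all terms and using \reff{parint} once more to re-identify $u_j$, the $\beta_ju_j$ contributions cancel and one is left precisely with $\frac1T\partial_tu_j+a_j(x)\partial_xu_j+b_j(x,u)=\ve f_j$; finally, setting $x=0$ in the first equation of \reff{parint} and $x=1$ in the second recovers the two boundary conditions. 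Apart from the regularity bootstrap, every step is routine bookkeeping with the fundamental theorem of calculus and the chain rule.
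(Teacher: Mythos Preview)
Your proposal is correct and follows essentially the same route as the paper. In part~(i) you phrase the computation as solving a scalar ODE along the characteristic and using $c_1$ as integrating factor, whereas the paper writes $u_1(t,x)-c_1(t,x,0,T,\beta)u_1(t+\al_1(x,0)/T,0)$ as $\int_0^x\partial_y[c_1\,u_1]\,dy$ and expands; these are the same calculation. In part~(ii) the paper does exactly what you outline --- recovers the boundary conditions by evaluating \reff{parint} at $x=0$ and $x=1$, applies $\frac1T\partial_t+a_j(x)\partial_x$ using the two identities $L_j(t+\al_j(x,y)/T)=0$ and $L_jc_j=-\beta_j(t,x)c_j$, and then re-inserts \reff{parint}; your regularity bootstrap is just a spelled-out version of the paper's one-line ``it follows directly from \reff{parint} that also $\partial_xu$ exists and is continuous.'' (One small aside: the reason for the artificial $\beta_ju_j$ terms is not the bootstrap --- that works for any $\beta$ with $\partial_t\beta$ continuous --- but the later Fredholm argument, cf.\ Remark~\ref{changerem}.)
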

\begin{proof}
Let 
$\ve>0$, $T>0$ and $\beta \in C_{per}(\R\times[0,1];\R^2)$ 
be fixed.

$(i)$ Let a $C^1$-function 
$u:\R\times[0,1]\to\R^2$  be given. Because of $\al_j(x,x)=0$ and $c_j(t,x,x,T,\beta)=1$, we have 
\begin{eqnarray*}
&&u_1(t,x)-c_1(t,x,0,T,\beta)u_1(t+\al_1(x,0)/T,0)\\
&&=\int_0^x\partial_y[c_1(t,x,y,T,\beta)u_1(t+\al_1(x,y)/T,y)]\,dy\\
&&=\int_0^x\partial_yc_1(t,x,y,T,\beta)u_1(t+\al_1(x,y)/T,y)\,dy\\
&&+\int_0^xc_1(t,x,y,T,\beta)[\partial_tu_1(s,y),y)\partial_y\al_1(x,y)/T+\partial_xu_1(s,y)]_{s=t+\al_1(x,y)/T}\,dy.
\end{eqnarray*}
From $\partial_yc_1(t,x,y,T,\beta)=\beta_1(t+\al_1(x,y)/T,y)c_1(t,x,y,T,\beta)/a_1(y)$ and $\partial_y\al_1(x,y)=1/a_1(y)$ it follows that
\begin{eqnarray*}
&&u_1(t,x)-c_1(t,x,0,T,\beta)u_1(t+\al_1(x,0)/T,0)\\
&&=\int_0^x\frac{c_1(t,x,y,T,\beta)}{a_1(y)}\left[\frac{1}{T}\partial_tu_1(s,y)+a_1(y)\partial_xu_1(s,y)+\beta_1(s,y)u_1(s,y)\right]_{s=t+\al_1(x,y)/T}\,dy.
\end{eqnarray*}
Hence, if $u$ is a solution to \reff{sys}, i.e. to \reff{arti}, then it solves the first equation in \reff{parint}. Similarly one shows that it solves the second equation in \reff{parint}. 

$(ii)$ Let $u \in  C_{per}(\R\times[0,1];\R^2)$ be a solution to \reff{parint}. Then the first equation in \reff{parint} with $x=0$ yields
$$
u_1(t,0)=c_1(t,0,0,T,\beta)[r_1u_2(s,0)+\ve g_1(s)]_{s=t+\al_1(0,0)/T}=r_1u_2(t,0)+\ve g_1(t),
$$
i.e. the first boundary condition in \reff{sys} is satisfied. Similarly one shows that second boundary condition in \reff{sys} is satisfied also. 

Moreover, if $\partial_tu$ and $\partial_t\beta$ exist and are continuous, then it follows directly from \reff{parint}
that also $\partial_xu$ exists and and is continuous, i.e. $u$ is $C^1$-smooth.

Now let us show that the first differential equation in \reff{sys} is satisfied. For that we use that
\be
\label{chara}
\left(\frac{1}{T}\partial_t+a_1(x)\partial_x\right)\phi(t+\al_1(x,y)/T,y)=0 \mbox{ for all } \phi \in C^1(\R).
\ee
Therefore, \reff{aldef} yields
\be
\label{chara1}
\left(\frac{1}{T}\partial_t+a_1(x)\partial_x\right)
c_1(t,x,y,T,\beta)=-\beta_1(t,x)c_1(t,x,y,T,\beta).
\ee
Applying the differential operator $\frac{1}{T}\partial_t+a_1(x)\partial_x$ to the first equation in \reff{parint} and using \reff{chara} and \reff{chara1}, we get
\begin{eqnarray*}
&&\frac{1}{T}\partial_tu_1(t,x)+a_1(x)\partial_xu_1(t,x)
+\beta_1(t,x)c_1(t,x,0,T,\beta)[r_1u_2(s,0)+\ve g_1(s)]_{s=t+\al_1(x,0)/T}\\
&&=[\ve f_1+B_1(\beta,u)](t,x)-\beta_1(t,x)\int_0^x\frac{c_1(t,x,y,T,\beta)}{a_1(y)}[\ve f_1+
B_1(\beta,u)](t+\al_1(x,y)/T,y)\,dy.
\end{eqnarray*}
Using the first equation in \reff{parint} again, we get the first equation in \reff{arti}, i.e. the first equation in \reff{sys}. Similarly one shows that also the second equation in \reff{sys} is satisfied.
\end{proof}

\begin{remark}
\label{advant}\rm
Roughly speaking, the advantages and disadvantages of systems \reff{sys} and \reff{parint} are the following: System \reff{sys} depends smoothly on the control parameter $T$ uniformly in  the state parameter $u$. This is not the case for system \reff{parint} because, if one differentiates equations in \reff{parint} with respect to $T$, then $u$ loses one degree of differentiability with respect to $t$.
On the other hand, the linearizations of \reff{parint} with respect to $u$ are Fredholm of index zero on a $T$-independent function space (see Subsection \ref{Fredholmness}), what is not the case for \reff{sys}.
\end{remark}

Let us write system \reff{parint} in an  abstract form. In order to do so, we introduce the function space
$$
C_{per}(\R;\R^2):=\{u \in C(\R;\R^2):\; u(t+1,x)=u(t,x) \mbox{ for all } (t,x) \in \R \times [0,1]\},
$$
which is equipped and complete with the maximum norm, which will be denoted by $\|\cdot\|_\infty$  again, i.e.
$\|w\|_\infty:=\max\{|w_j(t)|: \; j=1,2,\; t\in\R\}$ for $w \in C_{per}(\R;\R^2)$.
Further, we introduce the linear bounded operator $R:C_{per}(\R\times[0,1];\R^2)\to C_{per}(\R;\R^2)$,  defined by 
\be 
\label{Rdef}
[R(u_1,u_2)](t):=(r_1u_2(t,0),r_2u_1(t,1)).
\ee
Finally, for $T>0$ and $\beta \in C_{per}(\R\times[0,1];\R^2)$, we introduce linear bounded operators
$C(T,\beta):C_{per}(\R;\R^2)\to C_{per}(\R\times[0,1];\R^2)$
and $D(T,\beta):C_{per}(\R\times [0,1];\R^2)\to C_{per}(\R\times[0,1];\R^2)$,  defined by
\be
\label{Cdef}
\left.
\begin{array}{l}
~[C_1(T,\beta)w](t,x):=c_1(t,x,0,T,\beta)w_1(t+\al_1(x,0)/T),\\
~[C_2(T,\beta)w](t,x):=c_2(t,x,1,T,\beta)w_2(t+\al_2(x,1)/T)
\end{array}
\right\}
\ee
and
\be
\label{Ddef}
\left.
\begin{array}{l}
~\displaystyle[D_1(T,\beta)u](t,x):=\int_0^x\frac{c_1(t,x,y,T,\beta)}{a_1(y)}u_1(t+\al_1(x,y)/T,y)\,dy,\\
~\displaystyle[D_2(T,\beta)u](t,x):=-\int_x^1\frac{c_2(t,x,y,T,\beta)}{a_2(y)}u_2(t+\al_2(x,y)/T,y)\,dy.
\end{array}
\right\}
\ee
Then problem \reff{parint} can be written as
\be
\label{abstract}
u=C(T,\beta)(Ru+\ve g) +D(T,\beta)(B(\beta,u)+\ve f).
\ee
Note that the variable $\beta$ in \reff{abstract} is still artificial, i.e.
if $u$ satisfies \reff{abstract}, then $u$ satisfies \reff{abstract} with $\beta$ replaced by any other $\widetilde\beta\in C_{per}(\R\times[0,1];\R^2)$ also.
Therefore, we have the following:
\begin{eqnarray}
\label{artif}
&&\mbox{If \reff{abstract} is satisfied, then}\nonumber\\
&&\partial_\beta C(T,\beta)(Ru+\ve g)+\partial_\beta D(T,\beta)(B(\beta,u)+\ve f)
+D(T,\beta)\partial_\beta B(\beta,u)=0.
\end{eqnarray}

From the definitions \reff{aldef}, \reff{Cdef} and \reff{Ddef} it follows that the maps $\beta \mapsto C(T,\beta)R$ and 
$\beta \mapsto D(T,\beta)$ are $C^\infty$-smooth from $C_{per}(\R\times[0,1];\R^2)$ into
${\cal L}(C_{per}(\R\times[0,1];\R^2))$ (with respect to the uniform operator norm).
Similarly, from \reff{Bdef} and from the smoothness assumption
$b_j \in C^3([0,1]\times \R)$ (cf. \reff{aass})
it follows that the map $(\beta,u)\mapsto B(\beta,u)$ is $C^2$-smooth from
$C_{per}(\R\times[0,1];\R^2)^2$ into $C_{per}(\R\times[0,1];\R^2)$.

Unfortunately, the maps $T \mapsto C(T,\beta)R$ and $T \mapsto D(T,\beta)$ are not continuous
from $(0,\infty)$ into
${\cal L}(C_{per}(\R\times[0,1];\R^2))$
(with respect to the uniform operator norm).
This causes some technical difficulties in the analysis of equation \reff{abstract}. But we have 
\be 
\label{besch}
\sup\{\|C(T,\beta)Ru\|_\infty+\|D(T,\beta)u\|_\infty:\; T>0,\; \|\beta\|_\infty\le c, \;\|u\|_\infty\le 1\}<\infty 
\ee
for any $c>0$ and, moreover, we have
\be 
\label{conti}
(T,\beta,u)\mapsto (C(T,\beta)Ru,D(T,\beta)u)
\mbox{ is continuous} 
\ee
from $(0,\infty)\times
C_{per}(\R\times[0,1];\R^2)^2$ into 
$C_{per}(\R\times[0,1];\R^2)^2$.

\subsection{Invertibility of \boldmath$I-C(T,\beta^0)R$\unboldmath}
\label{Invertibility}
In what follows, we introduce a function $\beta^0 \in C_{per}(\R\times [0,1];\R^2)$  by (cf. \reff{bjk})
\be 
\label{betanulldef}
\beta^0(t,x):=(b_{11}(t,x),b_{22}(t,x))=\left(\partial_{u_1}b_1(x,u^0(t,x)),
\partial_{u_2}b_2(x,u^0(t,x))\right).
\ee
\begin{lemma}\label{Invert}
Suppose \reff{aass}, \reff{aassnot}  and \reff{unull}, and assume that one of the conditions \reff{nonressys}
and \reff{nonressys1} is satisfied. Then
there exists $\ve_0 \in (0,1)$ such that for all 
$T$ with $|T-1|<\ve_0$ and all $f \in C_{per}(\R\times [0,1];\R^2)$ 
there exists a unique solution $u=u_{T,f} \in 
C_{per}(\R\times [0,1];\R^2)$ to the equation $u=C(T,\beta^0)Ru+f$, and, moreover,
$$
\sup\left\{\|u_{T,f}\|_\infty:\; |T-1|<\ve_0,\;
  f \in C_{per}(\R\times [0,1];\R^2),\; \|f\|_\infty \le 1  \right\}<\infty.
$$
\end{lemma}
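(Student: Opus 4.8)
The plan is to reduce the invertibility of $I-C(T,\beta^0)R$ on the big space $C_{per}(\R\times[0,1];\R^2)$ in two steps: first to the invertibility of $I-RC(T,\beta^0)$ on the small space $C_{per}(\R;\R^2)$, and then to the invertibility of a scalar weighted shift operator on $C_{per}(\R;\R)$, for which the nonresonance condition \reff{nonressys} (or \reff{nonressys1}) shows up as a pointwise inequality driving a uniform Neumann series.

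First I would pass to the small space. Since $C(T,\beta^0)$ maps $C_{per}(\R;\R^2)$ into $C_{per}(\R\times[0,1];\R^2)$ while $R$ goes the other way, the elementary identity $(I-AB)^{-1}=I+A(I-BA)^{-1}B$ shows that $I-C(T,\beta^0)R$ is invertible as soon as $I-RC(T,\beta^0)$ is, with $(I-C(T,\beta^0)R)^{-1}=I+C(T,\beta^0)(I-RC(T,\beta^0))^{-1}R$; since $\|C(T,\beta^0)\|$ is bounded uniformly in $T>0$ (immediate from \reff{aldef}, \reff{Cdef}; cf. also \reff{besch}) and $\|R\|$ is a constant, a uniform bound on $(I-RC(T,\beta^0))^{-1}$ yields the asserted uniform bound. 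Next, from \reff{Rdef} and \reff{Cdef} one checks that $RC(T,\beta^0)$ is anti-diagonal on $C_{per}(\R;\R^2)$ (it interchanges the two components), so its square is diagonal, $(RC(T,\beta^0))^2=\diag(K_1(T),K_2(T))$, where each $K_j(T)$ is a scalar weighted shift $[K_j(T)w](t)=m_j(t,T)w(t+\theta_j(T))$ with $m_j(\cdot,T)\in C_{per}(\R;\R)$ and $\theta_j(T)\in\R$ depending continuously on $T$, and with $m_j$ bounded uniformly for $t\in\R$ and $T$ near $1$. Because $RC(T,\beta^0)$ commutes with itself, $I-(RC(T,\beta^0))^2$ invertible implies $I-RC(T,\beta^0)$ invertible, with $(I-RC(T,\beta^0))^{-1}=(I+RC(T,\beta^0))(I-(RC(T,\beta^0))^2)^{-1}$; and since $K_1(T)$ and $K_2(T)$ are conjugate through the off-diagonal block of $RC(T,\beta^0)$, which is a weighted shift with non-vanishing weight whenever $r_1r_2\ne0$ (and $(RC(T,\beta^0))^2=0$ if $r_1r_2=0$), it suffices to invert $I-K_1(T)$ with a uniform bound.

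The heart of the matter is the scalar problem. For $T=1$ a direct computation of $m_1(t,1)$ from \reff{aldef}, \reff{Cdef}, together with a shift of the time variable (harmless because the conditions are imposed ``for all $t$''), identifies the inequality ``$|m_1(t,1)|\ne1$ for all $t\in\R$'' precisely with \reff{nonressys}; likewise \reff{nonressys1} amounts to ``$|m_2(t,1)|\ne1$ for all $t$''. Since $|m_1(\cdot,1)|$ is continuous and $1$-periodic, its image is a compact subinterval of $[0,\infty)$ avoiding $1$, so either $|m_1(t,1)|\le1-\delta_0$ for all $t$ or $|m_1(t,1)|\ge1+\delta_0$ for all $t$, with some $\delta_0>0$. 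By uniform continuity of $(t,T)\mapsto m_1(t,T)$ there is $\ve_0\in(0,1)$ such that for $|T-1|<\ve_0$ the same alternative holds with $\delta:=\delta_0/2$. In the first case $\|K_1(T)^n\|=\sup_t\prod_{k=0}^{n-1}|m_1(t+k\theta_1(T),T)|\le(1-\delta)^n$, so $I-K_1(T)$ is invertible by the Neumann series with $\|(I-K_1(T))^{-1}\|\le1/\delta$; in the second case $K_1(T)$ itself is boundedly invertible (its weight never vanishes), $\|K_1(T)^{-n}\|\le(1+\delta)^{-n}$, and from $I-K_1(T)=-K_1(T)(I-K_1(T)^{-1})$ one gets $\|(I-K_1(T))^{-1}\|\le1/\delta$ again. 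Either way the bound is uniform; if it is \reff{nonressys1} that holds rather than \reff{nonressys}, run the same argument with $K_2(T)$. Unwinding these estimates through the two reductions gives the uniform bound on $(I-C(T,\beta^0)R)^{-1}$, and the existence and uniqueness of $u_{T,f}$ is just the bijectivity contained in this invertibility.

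The main obstacle is conceptual rather than computational: by Remark \ref{advant} the map $T\mapsto C(T,\beta^0)R$ is \emph{not} norm-continuous, so one cannot simply argue that invertibility at $T=1$ is an open condition; instead the uniform estimate has to be extracted from the explicit Neumann series for the scalar weighted shift $K_1(T)$, and this is exactly where one needs the nonresonance hypothesis in its pointwise (hence perturbation-stable) form ``$|m_1(t,1)|\ne1$ for all $t$''. A secondary, bookkeeping-type nuisance is tracking the time shifts buried in $\al_j$ when matching the explicit weight $m_1(\cdot,1)$ with the left-hand side of \reff{nonressys}.
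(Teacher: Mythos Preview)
Your proposal is correct and arrives at the same scalar weighted-shift problem as the paper, solved the same way (Neumann series, with the two cases $|m_1|<1$ vs.\ $|m_1|>1$), and you correctly identify the key obstacle that $T\mapsto C(T,\beta^0)R$ is only strongly continuous.

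The packaging differs slightly from the paper. The paper writes the system $(I-C(T)R)u=f$ out componentwise and substitutes one equation into the other to obtain a scalar functional equation for $u_2(\cdot,0)$ on $C_{per}(\R)$; you instead use the operator identity $(I-AB)^{-1}=I+A(I-BA)^{-1}B$ to pass from $I-C(T,\beta^0)R$ to $I-RC(T,\beta^0)$ on $C_{per}(\R;\R^2)$, then square the anti-diagonal operator $RC(T,\beta^0)$ to diagonalise. These are equivalent manoeuvres --- the paper's $\widetilde C(T)$ is exactly your $K_1(T)$, with the same weight $r_1r_2\,c_1(t+\alpha_2(0,1)/T,1,0,T)\,c_2(t,0,1,T)$ and the same shift $(\alpha_1(1,0)+\alpha_2(0,1))/T$. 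Your route is marginally cleaner in that the conjugacy $K_2=BK_1B^{-1}$ (valid whenever $r_1r_2\neq0$) explains transparently why inverting one block suffices, whereas the paper treats the two cases \reff{nonressys} and \reff{nonressys1} by writing down two separate substituted equations; conversely the paper's explicit computation makes the matching of $|m_1(\cdot,1)|\neq1$ with \reff{nonressys} completely transparent, which you leave as a ``bookkeeping nuisance''.
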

\begin{proof}
Because of $\beta=\beta^0$ is fixed, we will skip the dependence on $\beta$ in the notation
$$
C(T):=C(T,\beta^0) \mbox{ and } c_j(t,x,y,T):=c_j(t,x,y,T,\beta^0).
$$
Remark that the map $T \mapsto C(T)$ is not uniformly continuous, but only strongly continuous. Hence, for proving the invertibility of the operator $I-C(T)R$ for $T\approx 1$ it is not sufficient to prove the invertibility of the operator $I-C(1)R$.

Take $f \in C_{per}(\R \times [0,1];\R^2)$. We have to show that  for $T \approx 1$ there exists a unique solution $u \in C_{per}(\R \times [0,1];\R^2)$ to the equation
  \be
  \label{1a}
  (I-C(T)R)u=f
  \ee
and that $\|u\|_\infty \le \mbox{const}\|f\|_\infty$, 
where the constant does not depend on $T$ and $f$.
  Equation \reff{1a} is satisfied if and only if for 
  all $t \in \R$ and $x \in [0,1]$ we have
  \begin{eqnarray}
    \label{2a}
    u_1(t,x)&=&r_1c_1(t,x,0,T)u_2(t+\al_1(x,0)/T,0)+f_1(t,x),\\
     \label{3a}
    u_2(t,x)&=&r_2c_2(t,x,1,T)u_1(t+\al_2(x,1)/T,1)+f_2(t,x).
    \end{eqnarray}
 System \reff{2a}--\reff{3a} is satisfied if and only 
 if \reff{2a} is true and if 
  \begin{eqnarray}
     \label{4a}
    u_2(t,x)&=&r_1r_2c_1(t+\al_2(x,1)/T,1,0,T)
    c_2(t,x,1,T)u_2(t+(\al_1(1,0)+\al_2(x,1))/T,0)
    \nonumber\\
    &&+r_2c_2(t,x,1,T)f_1(t+\al_2(x,1)/T,x)+
    f_2(t,x).
  \end{eqnarray}
 Put $x=0$ in \reff{4a} and get
   \begin{eqnarray}
     \label{5a}
    u_2(t,0)&=&r_1r_2c_1(t+\al_2(0,1)/T,1,0,T)
    c_2(t,0,1,T)u_2(t+(\al_1(1,0)+\al_2(0,1))/T,0)
    \nonumber\\
    &&+r_2c_2(t,x,1,T)f_1(t+\al_2(0,1)/T,0)+f_2(t,0).
   \end{eqnarray}
Similarly, system \reff{2a}--\reff{3a} 
is satisfied if and only 
 if \reff{3a} is true and 
  \begin{eqnarray}
     \label{4b}
    u_1(t,x)&=&r_1r_2c_2(t+\al_1(x,0)/T,0,1,T)
    c_1(t,x,0,T)u_1(t+(\al_2(0,1)+\al_1(x,0))/T,0)
    \nonumber\\
    &&+r_2c_2(t+\al_1(x,0)/T,0,1,T)f_2(t+\al_1(x,0)/T,x))+
    f_1(t,x),
  \end{eqnarray}
  i.e. if and only if  \reff{3a} and \reff{4b} are true and 
   \begin{eqnarray}
     \label{5b}
    u_2(t,0)&=&r_1r_2c_1(t+\al_2(0,1)/T,1,0,T)
    c_2(t,0,1,T)u_2(t+(\al_1(1,0)+\al_2(0,1))/T,0)
    \nonumber\\
    &&+r_1c_1(t+\al_2(0,1)/T,1,0,T)f_1(t+\al_2(0,1)/T,0))+f_2(t,0).
   \end{eqnarray}
   
Let us consider equation \reff{5a}. It is a functional equation for the unknown function $u_2(\cdot,0)$. In order to solve it, let us denote
   by $C_{per}(\R)$ the Banach space of all $1$-periodic continuous functions $\tilde{u}:\R \to\R$ with the norm
   $\|\tilde{u}\|_\infty:=\max\{|\tilde{u}(t)|: \; t \in \R\}$.  Equation \reff{5a} is an equation in  $C_{per}(\R)$ of the type
   \be
   \label{6a}
   (I-\widetilde{C}(T))\tilde{u}=\tilde{f}(T)
   \ee
   with $\tilde{u},\tilde{f}(T)\in C_{per}(\R)$ defined by $\tilde{u}(t):=u_2(t,0)$ and
   $$
   [\tilde{f}(T)](t):=r_1c_1(t+\al_2(0,1)/T,1,0,T)
   f_1(t+\al_2(0,1)/T,0))+f_2(t,0)
   $$
   and with $\widetilde{C}(T)\in {\cal L}(C_{per}(\R))$ defined by
   $$
   [\widetilde{C}(T)\tilde{u}](t):=
 r_1r_2c_1(t+\al_2(0,1)/T,1,0,T)
    c_2(t,0,1,T)u_2(t+(\al_1(1,0)+\al_2(0,1))/T,0).
   $$
   From the definitions of the functions $c_1$ 
   and $c_2$ (cf.  \reff{aldef}) it follows that
   \begin{eqnarray*}
   &&c_1(t+\al_2(0,1)/T,1,0,T)
    c_2(t,0,1,T)\\
   &&=\exp \int_0^1\left(\frac{b_{22}(t+\al_2(0,x)/T,x)}{a_2(x)}-
 \frac{b_{11}(t+(\al_1(1,x)+\al_2(0,1))/T,x)}{a_1(x)}\right)\,dx
\end{eqnarray*}
 and, hence,
\begin{eqnarray*}
   &&c_1(s+\al_2(0,1)/T,1,0,T)
    c_2(s,0,1,T)|_{T=1,s=t-\al_2(0,1)}\\
   &&=\exp \int_0^1\left(\frac{b_{22}(t-\al_2(x,1),x)}{a_2(x)}-
 \frac{b_{11}(t-\al_1(x,1),x)}{a_1(x)}\right)\,dx,  
\end{eqnarray*}
Consequently, if assumption \reff{nonressys} is satisfied, then
$|r_1r_2c_1(t+\al_2(0,1),1,0,1)
    c_2(t,0,1,1)|\not=1$ for all $t \in \R$.

First, let us consider the case that 
$$
c_+:=\max\{|r_1r_2c_1(t+\al_2(0,1),1,0,1)
    c_2(t,0,1,1)|:\; t \in \R\}<1.
$$
 Then there exists $\ve_0 \in (0,1)$ such that for $|T-1|<\ve_0$
   we have
   $$|r_1r_2c_1(t+\al_2(0,1)/T,1,0,T)
    c_2(t,0,1,T)|
   \le \frac{1+c_+}{2}<1 \quad\mbox{for all } t \in \R.$$
   Therefore,
   $$
   \|\widetilde{C}(T)\|_{ {\cal L}(C_{per}(\R))}\le \frac{1+c_+}{2}<1 \quad\mbox{for  }  |T-1|<\ve_0.
   $$
   Hence, for $|T-1|<\ve_0$ the operator $I-\widetilde{C}(T)$ is an isomorphism on $C_{per}(\R)$, and, moreover,
   $$
   \|(I-\widetilde{C}(T))^{-1}\|_{ {\cal L}(C_{2\pi}(\R))}\le \frac{1}{1-\frac{1+c_+}{2}}=\frac{2}{1-c_+}.
   $$
   Therefore, for $|T-1|<\ve_0$
   there exists a unique solution $u_2(\cdot,0)\in C_{per}(\R)$
   to \reff{5a}, and
   $
   \|u_2(\cdot,0)\|_\infty \le \mbox{const}\|\tilde{f}(T)\|_\infty\le \mbox{const}\|f\|_\infty,
   $
   where the constants do not depend on $T$ and $f$.
   Inserting this solution into the right-hand sides of \reff{4a} 
   and \reff{2a}, we get the unique solution $u=(u_1,u_2) \in  C_{per}(\R\times [0,1];\R^2)$ to \reff{2a}--\reff{3a}.
   Moreover, this solution satisfies the a priori estimate
   $\|u\|_\infty \le \mbox{const}\|f\|_\infty$,   where the constant does not depend on $T$ and $f$.

    Now, let us consider the case that 
 $$
c_-:=\min\{|r_1r_2c_1(t+\al_2(0,1),1,0,1)
    c_2(t,0,1,1)|:\; t \in \R\}>1.
$$   
Then there exists $\ve_0>0$ such that for all  $|T-1|<\ve_0$
   we have
   $$|r_1r_2c_1(t+\al_2(0,1)/T,1,0,T)
    c_2(t,0,1,T)|
   \ge \frac{1+c_-}{2}\ge 1 \quad\mbox{ for all } t \in \R.$$ 
Equation \reff{5a} is equivalent to
 \begin{eqnarray*}
      &&u_2(t,0)=
    \frac{u_2(t-(\al_1(1,0)+\al_2(0,1))/T,0)}
    {r_1r_2c_1(t-\al_1(1,0)/T,1,0,T)
    c_2(t-(\al_1(1,0)+\al_2(0,1))/T,0,1,T)}\\
    &&+\frac{f_1(t-\al_1(1,0)/T,0)}{r_2c_1(t-(\al_1(1,0)+\al_2(0,1))/T,1,0,T)}\\
    &&+\frac{f_2(t-(\al_1(1,0)+\al_2(0,1))/T,0)}{r_1r_2c_1(t-\al_1(1,0)/T,1,0,T)
    c_2(t-(\al_1(1,0)+\al_2(0,1))/T,0,1,T)}.
 \end{eqnarray*}
 This equation is of the type \reff{6a} again, but now with 
 $$
 \|\widetilde{C}(T)\|_{ {\cal L}(C_{per}(\R))}\le\frac{2}{1+c_-}<1.
 $$
Hence, we can proceed as above.

Similarly one deals with the case if condition \reff{nonressys1} is satisfied.  Then equation \reff{5b} is uniquely solvable with corresponding estimate, and so is equation \reff{4b} and, hence, system \reff{2a}--\reff{3a}.
\end{proof}

\subsection{Fredholmness of \boldmath$I-C(T,\beta^0)R-D(T,\beta^0)\partial_uB(\beta^0,u^0)$\unboldmath}
\label{Fredholmness}
In this subsection we prove the following Fredholmness result.
\begin{lemma}
\label{Fredholm}
Suppose \reff{aass}, \reff{aassnot} and \reff{unull}, and assume that  one of the conditions \reff{nonressys}
and \reff{nonressys1} is satisfied. Take $\ve_0>0$ from Lemma \ref{Invert}. Then, for all 
$T$ with $|T-1|<\ve_0$, the operator $I-C(T,\beta^0)R-D(T,\beta^0)\partial_uB(\beta^0,u^0)$
is Fredholm of index zero from $C_{per}(\R\times [0,1];\R^2)$ into itself.
\end{lemma}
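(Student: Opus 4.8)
The plan is to reduce the Fredholm property, which lives on the two-dimensional domain $\R\times[0,1]$, to a Riesz--Schauder argument on the one-dimensional domain $\R$, by integrating the underlying linear equation along characteristics down to the boundary, just as in the proof of Lemma~\ref{Invert}. As a first simplification one uses Lemma~\ref{Invert}: since $I-C(T,\beta^0)R$ is boundedly invertible for $|T-1|<\ve_0$, the operator in question equals $(I-C(T,\beta^0)R)\bigl(I-(I-C(T,\beta^0)R)^{-1}D(T,\beta^0)\de_uB(\beta^0,u^0)\bigr)$, so only the term $D(T,\beta^0)\de_uB(\beta^0,u^0)$ needs to be understood. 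Here the choice \reff{betanulldef} of $\beta^0$ is essential: it makes $\de_uB(\beta^0,u^0)$ \emph{purely off-diagonal}, $[\de_uB(\beta^0,u^0)v]_j=-\sum_{k\neq j}b_{jk}v_k$, i.e. the diagonal part of the linearization has been shifted into the integrating factors $c_j$ of \reff{aldef}, whose products along the closed characteristic loops are precisely the weights appearing in the non-resonance conditions \reff{nonressys} and \reff{nonressys1}.

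A direct Riesz--Schauder argument on $C_{per}(\R\times[0,1];\R^2)$ fails: $D(T,\beta^0)$ integrates only in $x$ and does not smooth in $t$, so $D(T,\beta^0)\de_uB(\beta^0,u^0)$ is not compact there, and $T\mapsto D(T,\beta^0)$ is only strongly, not uniformly, continuous (cf.\ \reff{conti}), so the index cannot be pinned down by a norm perturbation either. Instead I would integrate the equation $u=C(T,\beta^0)Ru+D(T,\beta^0)\de_uB(\beta^0,u^0)u+h$ (with arbitrary right-hand side $h$) along the two characteristic families down to the boundary, as in the passage from \reff{2a}--\reff{3a} to \reff{5a}; the strict hyperbolicity \reff{aassnot} (the characteristic speeds $a_1,a_2$ are nowhere equal, and none vanishes) makes the substitutions that express interior values through boundary traces diffeomorphisms, and turns the $t$-translations occurring in $D(T,\beta^0)\de_uB(\beta^0,u^0)$ into genuine integration variables. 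The equation then becomes equivalent to one for the boundary traces $\widetilde u:=(u_1(\cdot,1),u_2(\cdot,0))\in C_{per}(\R;\R^2)$ of the form $(I-\widetilde C(T)-\widetilde K(T))\widetilde u=\widetilde L(T)h$, where $\widetilde C(T)$ is the same weighted shift operator as in Lemma~\ref{Invert} (an isomorphism of $C_{per}(\R;\R^2)$ for $|T-1|<\ve_0$ by \reff{nonressys} or \reff{nonressys1}), $\widetilde K(T)$ is an integral operator with continuous kernel, hence compact, and $\widetilde L(T)$ is bounded. Consequently $I-\widetilde C(T)-\widetilde K(T)=(I-\widetilde C(T))\bigl(I-(I-\widetilde C(T))^{-1}\widetilde K(T)\bigr)$ is Fredholm of index zero on $C_{per}(\R;\R^2)$, and, the trace map and the reconstruction map $\widetilde u\mapsto u$ being bounded, this transfers to $C_{per}(\R\times[0,1];\R^2)$ with the index unchanged. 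Equivalently, one may simply quote the abstract Fredholmness criterion \cite[Corollary 4.11]{KR4} (or \cite[Theorem~1.2]{second}), whose hypotheses hold under \reff{aass}, \reff{aassnot}, \reff{unull} together with one of \reff{nonressys}, \reff{nonressys1}.

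The step I expect to be the main obstacle is the compactness of $\widetilde K(T)$: one must check that, after the reduction to the boundary, every term coming from $D(T,\beta^0)\de_uB(\beta^0,u^0)$ produces an integral operator on $C_{per}(\R;\R^2)$ with a genuinely \emph{continuous} kernel, which is where the off-diagonal structure of $\de_uB(\beta^0,u^0)$, the regularity $b_j\in C^3$ and the strict hyperbolicity \reff{aassnot} all enter; one must also verify that all constants, in particular $\ve_0$, can be chosen uniformly for $T$ in a neighbourhood of $1$, since both Lemma~\ref{Invert} and the invertibility of $\widetilde C(T)$ hold only there. That the index is zero (and not merely finite) then costs nothing: $I-\widetilde C(T)$ is invertible and $(I-\widetilde C(T))^{-1}\widetilde K(T)$ is compact, so Riesz--Schauder gives index zero on $C_{per}(\R;\R^2)$, and this is inherited by the operator of Lemma~\ref{Fredholm}.
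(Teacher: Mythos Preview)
Your approach differs from the paper's, and your explicit sketch has a gap. The paper never reduces to boundary traces; it stays on $C_{per}(\R\times[0,1];\R^2)$ and invokes Nikolskii's criterion (if $K^2$ is compact then $I-K$ is Fredholm of index zero) with $K=(I-CR)^{-1}DB^0$. Expanding $K^2=(I-CR)^{-1}\bigl((DB^0)^2+DB^0CR(I-CR)^{-1}DB^0\bigr)$, it suffices to show that $DB^0D$ and $DB^0CR$ are compact on the two-dimensional space. The off-diagonal structure of $B^0$ that you correctly identify makes, for instance, $[D_1B^0Du](t,x)$ a double integral in which $u_2$ appears only as $u_2(t+(\al_1(x,y)+\al_2(y,z))/T,z)$; since $\partial_y(\al_1(x,y)+\al_2(y,z))=1/a_1(y)-1/a_2(y)\neq 0$ by \reff{aassnot}, one changes the inner integration variable $y$ to this time shift, and the result is $C^1$ in $(t,x)$ with derivatives bounded by $\mbox{const}\,\|u\|_\infty$, whence compactness by Arzel\`a--Ascoli. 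One squaring suffices; no iteration and no one-dimensional reduction are needed.

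The gap in your boundary-trace reduction is this: the passage from \reff{2a}--\reff{3a} to \reff{5a} in Lemma~\ref{Invert} closes up after one substitution precisely because $DB^0$ is absent there. With $DB^0$ present, $[D_1B^0u](t,1)$ still involves $u_2(\cdot,y)$ at \emph{all} interior $y\in[0,1]$, and substituting the second equation for those values brings in $[D_2B^0u]$, which involves $u_1$ at interior points again; no finite number of substitutions yields a closed equation for $\widetilde u=(u_1(\cdot,1),u_2(\cdot,0))$ alone. Your claim that one arrives at $(I-\widetilde C-\widetilde K)\widetilde u=\widetilde L h$ with $\widetilde K$ ``an integral operator with continuous kernel'' after this single reduction is therefore not justified by the sketch; one would need to organise and control an infinite iteration, which is in effect what \cite{KR4} does. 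Citing \cite{KR4} is a legitimate shortcut, but then the lemma is proved by reference rather than by the argument you outline. The paper's Nikolskii-plus-change-of-variables route avoids this entirely: the single squaring $K\mapsto K^2$ replaces the would-be infinite boundary iteration, and compactness is established directly on $C_{per}(\R\times[0,1];\R^2)$.
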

\begin{proof}
	Because of $T$ and  $\beta=\beta^0$ are fixed, in the notations below we will not mention the dependence on $T$ and $\beta$, i.e. 
\be
\label{DTdef}
C:=C(T,\beta^0),\; D:=D(T,\beta^0),\; B^0:=\partial_u B(\beta^0,u^0)
\ee
and (cf. the definitions of the functions $c_1$,$c_2$ and $\beta^0$ in \reff{aldef} and \reff{betanulldef})
$$
c_j(t,x,y):=c_j(t,x,y,T,\beta^0)
=\exp\int_x^y\frac{b_{jj}(t+\al_j(x,z)/T,z)}{a_j(z)}\,dz.
$$

On account of  Lemma~\ref{Invert},  the operator $I-CR-DB^0$ is Fredholm of index zero if and only if
 the operator $I-(I-CR)^{-1}DB^0$ is Fredholm of index zero. 
Hence, it suffices to show that  the operator $((I-CR)^{-1}DB^0)^2$ is compact. This is a consequence of the following Fredholmness criterion of S. M. Nikolskii (cf. e.g.  \cite[Theorem XIII.5.2]{Kant}):
   If a Banach space $U$ and a linear bounded operator $K:U\to U$ are given such that $K^2$ is compact, then
   the operator $I-K$ is Fredholm of index zero.
   
A straightforward calculation shows that
\be\label{prod}
((I-CR)^{-1}DB^0)^2
=(I-CR)^{-1}
\left((DB^0)^2+DB^0CR(I-CR)^{-1}DB^0\right).
\ee
Using this and Lemma \ref{Invert} again, we get that it suffices to show that the operators $DB^0D$ and $DB^0CR$ are compact.

Let us show that $DB^0D$ is compact from $C_{per}(\R\times [0,1];\R^2)$ into itself. Here the key point is, that the partial integral operator $D$ is not compact, but the "full" integral operator $DB^0D$ is compact.

Take $u \in C_{per}(\R\times [0,1];\R^2)$. From \reff{Bdef}
 and \reff{DTdef} it follows that 
\be 
\label{anti}
[B^0u](t,x)=-(b_{12}(t,x)u_2(t,x),b_{21}(t,x)u_1(t,x)).
\ee
Hence, the definitions \reff{Ddef} and \reff{DTdef} imply that the first component of 
$[DB^0u](t,x)$ is
\be
\label{D1B}
[D_1B^0u](t,x)=-\int_0^x\frac{c_1(t,x,y)}{a_1(x)}
\left[b_{12}(s,y)u_2(s,y)\right]_{s=t+\al_1(x,y)/T}\,dy.
\ee
Using  \reff{Ddef} and \reff{DTdef}, we get
 \be
\label{D1BD}
[D_1B^0Du](t,x)=\int_0^x\int_y^1d(t,x,y,z) u_2(t+(\al_2(x,y)+\al_1(y,z))/T,z)\,dzdy,
\ee
where
$$
d(t,x,y,z):=-\frac{c_1(t,x,y)}{a_1(y)a_2(z)}
\left[b_{12}(s,y)c_2(s,z)\right]_{s=t+\al_1(x,y)/T}.
$$
Now we change the order of integration in \reff{D1BD} according to
\be
\label{change}
\int_0^xdy\int_y^1dz=\int_0^xdz\int_0^zdy+\int_x^1dz\int_0^xdy.
\ee

Let us consider the first summand in the right-hand side of \reff{change}, it is
\be
\label{firsts}
{\cal I}(t,x,u):=\int_0^x\int_0^zd(t,x,y,z) u_2(t+(\al_2(x,y)+\al_1(y,z))/T,z)\,dydz.
\ee
In the inner integral in \reff{firsts} we replace the integration variable $y$ by  $\eta$ according to
$$
\eta=\widehat{\eta}(t,x,y,z):=t+(\al_2(x,y)+\al_1(y,z))/T,z)=t+\frac{1}{T}\left(\int_x^y\frac{d\xi}{a_2(\xi)}+\int_y^z\frac{d\xi}{a_1(\xi)}\right).
$$
Here we used the definitions \reff{bjk} of $\al_j$. Because of assumption $a_1(x)\not=a_2(x)$ for all $x \in [0,1]$ (cf. \reff{aass}) we have
$$
\partial_y\widehat{\eta}(t,x,y,z)=\frac{1}{T}\left(\frac{1}{a_2(y)}-\frac{1}{a_1(y)}\right)\not=0
\mbox{ for all } y \in [0,1],
$$
i.e. the function $y \mapsto \widehat{\eta}(t,x,y,z)$ is strictly monotone. Let us denote its inverse function by $\eta \mapsto \widehat{y}(t,x,\eta,z)$. Then
$$
\partial_\eta \widehat{y}(t,x,\eta,z)=
\left(\partial_y\widehat{\eta}(t,x,\widehat{y}(t,x,\eta,z),z)\right)^{-1}=
T\left(\frac{1}{a_2(\widehat{y}(t,x,\eta,z))}-\frac{1}{a_1(\widehat{y}(t,x,\eta,z))}\right)^{-1},
$$
and 
$$
{\cal I}(t,x,u)=\int_0^x\int_{\widehat{\eta}(t,x,0,z)}^{\widehat{\eta}(t,x,z,z)}
d(t,x,\widehat{y}(t,x,\eta,z),z)u_2(\eta,z)
\partial_\eta \widehat{y}(t,x,\eta,z)d\eta \,dz.
$$
The smoothness assumptions in \reff{aass} and \reff{kersys} on the data $a_j$, $b_j$ and $u^0$ yield that the functions $d$, $\widehat{\eta}$, $\widehat{y}$ and $\partial_\eta \widehat{y}$
are $C^1$-smooth with bounded partial derivatives. Therefore,
$$
|{\cal I}(t,x,u)|+|\partial_t{\cal I}(t,x,u)|+|\partial_x{\cal I}(t,x,u)|\le \mbox{const}\,\|u\|_\infty,
$$
where the constant does not depend on $t$, $x$ and $u$.

Similarly one shows that also the second summand in the right-hand side of \reff{change}, which is
$$
\int_x^1\int_0^xd(t,x,y,z) u_2(t+(\al_2(x,y)+\al_1(y,z))/T,z)\,dydz,
$$ 
is $C^1$-smooth with respect to $t$ and $x$ and that its partial derivatives can be estimated by $\mbox{const}\,\|u\|_\infty$,
where the constant does not depend on $t$, $x$ and $u$. Similarly one shows that also $D_2B^0Du$ is $C^1$-smooth with respect to $t$ and $x$ and that its partial derivatives can be estimated by $\mbox{const}\,\|u\|_\infty$, where the constant does not depend on $t$, $x$ and $u$. This way we get
$
\|\partial_t DB^0Du\|_\infty+\|\partial_x DB^0Du\|_\infty \le \mbox{const}\,\|u\|_\infty,
$
where the constant does not depend on $u$.
Hence, the Arzela-Ascoli Theorem yields that the operator $DB^0D$ is compact from $C_{per}(\R\times [0,1];\R^2)$ into itself.

Finally, let us show that the operator $DB^0CR$ 
is compact from $C_{per}(\R\times [0,1];\R^2)$ into itself.
From \reff{Rdef}, \reff{Cdef} and \reff{D1B} it follows that
$$
[D_1B^0CRu](t,x)=\int_0^xd(t,x,y,) u_1(t+(\al_2(x,y)+\al_1(y,1))/T,1)\,dy
$$
where
$$
d(t,x,y):=\frac{r_2c_1(t,x,y)}{a_1(y)}
\left[b_{12}(s,y)c_2(s,z)\right]_{s=t+\al_1(x,y)/T}.
$$
Now we change the integration variable $y$ to $\eta=t+(\al_2(x,y)+\al_1(y,1))/T$ and proceed as above. This way  we get the desired bound
$\|\partial_t DB^0CRu\|_\infty+\|\partial_x DBCRu\|_\infty \le \mbox{const}\,\|u\|_\infty$,
where the constant does not depend on $u$.
\end{proof}

\begin{remark}
\label{changerem}\rm
In the proof above 
one can see the reason for inserting the artificial term $\beta_j(t,x)u_j(t,x)$ into \reff{arti} and for the choice $\beta=\beta^0$ (cf. \reff{betanulldef}):
It leads to the linear operator $B^0$, which has the property that the first component $[B^0_1u](t,x)$ does not depend on $u_1$ and the second component $[B^0_2u](t,x)$ does not depend on $u_2$. This leads to \reff{D1BD} and, hence, to  the change of integration variables $y \mapsto \eta$. If one would not introduce the artificial term, i.e. if one would choose $\beta=0$, then the operator $DB^0D$ would not be compact from $C_{per}(\R\times [0,1];\R^2)$ into itself, in general.
\end{remark}

\subsection{Kernel and image of \boldmath$I-C(1,\beta^0)R-D(1,\beta^0)\partial_uB(\beta^0,u^0)$\unboldmath}
\label{kerim}
In this subsection we describe the kernel and the image of the Fredholm operator $I-C(1,\beta^0)R-D(1,\beta^0)\partial_uB(\beta^0,u^0)$. For that we need some more notation.

We denote by $C^1_{per}(\R\times[0,1];\R^2)$ the space of all $C^1$-smooth  
$u \in C_{per}(\R\times[0,1];\R^2)$. Further, for $T>0$ we consider linear operators $\A(T)$  from $C^1_{per}(\R\times[0,1];\R^2)$ into $C_{per}(\R\times[0,1];\R^2)$ with components 
\be 
\label{Adef}
[\A_j(T)u](t,x):=\frac{1}{T}\partial_tu_j(t,x)+a_j(x)\partial_xu_j(t,x)+
\partial_{u_j}b_j(x,u^0(t,x))u_j(t,x).
\ee
We also consider the linear bounded functional $\phi:C_{per}(\R\times[0,1];\R^2)
\to \R$, which is defined 
for  $u \in C^1_{per}(\R\times[0,1];\R^2)$
by
\begin{eqnarray} 
\label{phidef}
\phi(u)&:=&\int_0^1\int_0^1
\left([\A_1(1)u](t,x)u_1^*(t,x)+[\A_2(1)u](t,x)u_2^*(t,x)\right)\,dtdx\nonumber\\
&&+\int_0^1\left(a_1(0)u_1(t,0)u_1^*(t,0)
-a_2(1)u_2(t,1)u_2^*(t,1)\right)dt.
\end{eqnarray}
Because of $u^*$ is $C^1$-smooth, we have 
$\sup\{|\phi(u)|:\; u \in  C^1_{per}(\R\times[0,1];\R^2), \; \|u\|_\infty\le 1\}<\infty$.
Hence, although \reff{phidef} defines the values of $\phi$ on a dense subspace only, it
really determines a linear bounded functional on $C_{per}(\R\times[0,1];\R^2)$.
\begin{lemma}
\label{lemker}
Suppose \reff{aass}, \reff{aassnot}, \reff{unull}, \reff{kersys} and
\reff{keradsys}, and assume that one of the conditions \reff{nonressys}
and \reff{nonressys1} is satisfied.
Then the following is true:

(i) $\ker\left[I-C(1,\beta^0)R-D(1,\beta^0)\partial_uB(\beta^0,u^0)\right]=\mbox{span}\{\de_tu^0\}$

(ii) $\im\left[I-C(1,\beta^0)R-D(1,\beta^0)\partial_uB(\beta^0,u^0)\right]=\ker \phi$

(iii) $C_{per}(\R\times[0,1];\R^2)
=\ker \phi \oplus
\mbox{span}\{D(1,\beta^0)\de_tu^0\}$
\end{lemma}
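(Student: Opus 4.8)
The plan is to establish the three assertions of Lemma~\ref{lemker} in order, exploiting the Fredholm-of-index-zero property already proved in Lemma~\ref{Fredholm} and the equivalence between the partial integral equation and the PDE from Lemma~\ref{equiv}. Throughout I write $L:=I-C(1,\beta^0)R-D(1,\beta^0)\de_uB(\beta^0,u^0)$ for brevity.

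For part (i), I would first note that $L u=0$ means exactly that $u$ solves the linear partial integral system obtained by linearizing \reff{parint} at $\ve=0$, $T=1$, $u=u^0$. A solution $u\in C_{per}(\R\times[0,1];\R^2)$ of $Lu=0$ satisfies $u=C(1,\beta^0)Ru+D(1,\beta^0)\de_uB(\beta^0,u^0)u$; the right-hand side, being built from the partial integral operator $D$, is automatically $C^1$-smooth in $t$ provided $\de_tu$ exists and is continuous. To close this bootstrap I would argue first that any kernel element is $C^1$ in $t$: differentiate the fixed-point relation formally in $t$, observe that $\de_tu$ again satisfies an equation of the same type $(I-C(1,\beta^0)R)(\de_tu)=(\text{known } C^0 \text{ data})$, and invoke the invertibility from Lemma~\ref{Invert} to see $\de_tu$ exists and is continuous. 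Once $u$ is known to be $C^1$, Lemma~\ref{equiv}(ii) converts it into a classical solution of the homogeneous boundary value problem \reff{linsys}. Assumption \reff{kersys} then forces $u=c\,\de_tu^0$ for some $c\in\R$, so $\ker L\subseteq\mathrm{span}\{\de_tu^0\}$; the reverse inclusion is immediate because $\de_tu^0$ solves \reff{linsys} (regularity of $u^0$ guaranteed by Subsection~\ref{Regularity} under the non-resonance hypothesis) and hence, by Lemma~\ref{equiv}(i), solves the homogeneous partial integral system, i.e. lies in $\ker L$. Note $\de_tu^0\neq0$ by \reff{keradsys}, so $\dim\ker L=1$.

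For part (ii), since $L$ is Fredholm of index zero with one-dimensional kernel, its image is a closed subspace of codimension one, so it suffices to show $\im L\subseteq\ker\phi$ and that $\phi\not\equiv0$; equality then follows by comparing codimensions. To show $\im L\subseteq\ker\phi$ I would take $f=Lu$ with $u\in C_{per}$, first upgrade $u$ and $f$ to $C^1$ (for the computation one may restrict to $f\in C^1_{per}$, which is dense, and pass to the limit using boundedness of $\phi$), and then translate via Lemma~\ref{equiv} into: $u$ is a $C^1$ solution of the inhomogeneous version of \reff{linsys} with right-hand side related to $f$, precisely $[\A_j(1)u](t,x)+\sum_k b_{jk}(t,x)u_k(t,x)-\de_{u_j}b_j\,u_j = $ the $j$th component of an explicit expression in $f$ — more cleanly, $\A_j(1)u + (\text{off-diagonal } b_{jk}) = (\text{expression in } f)$. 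Then I would pair this identity with $u^*$, integrate over $[0,1]^2$, and integrate by parts in $t$ and $x$: the bulk terms cancel because $u^*$ solves the adjoint problem \reff{adsys}, and the boundary terms at $x=0,1$ and the periodicity in $t$ are exactly engineered so that what survives is $\phi$ evaluated appropriately — this is precisely the computation behind the definition \reff{phidef} of $\phi$, analogous to Remark~\ref{remad} in the second-order case. The upshot is $\phi(f)=0$ for all $f\in\im L$. That $\phi\not\equiv0$: evaluate $\phi$ on a function realizing $\de_tu^0$ as right-hand side, or more directly observe $\phi(D(1,\beta^0)\de_tu^0)\neq0$, which is what part (iii) needs anyway.

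For part (iii), I would compute $\phi(D(1,\beta^0)\de_tu^0)$ and show it is nonzero; by the normalization \reff{keradsys} and the adjoint pairing this quantity equals $\sum_j\int_0^1\int_0^1 \de_tu_j^0\,u_j^*\,dtdx=1$ (up to checking the boundary terms vanish, which they do because $\de_tu^0$ satisfies the homogeneous boundary conditions in \reff{linsys} and $u^*$ the adjoint ones). Hence $D(1,\beta^0)\de_tu^0\notin\ker\phi=\im L$, and since $\ker\phi$ has codimension one, $C_{per}(\R\times[0,1];\R^2)=\ker\phi\oplus\mathrm{span}\{D(1,\beta^0)\de_tu^0\}$, which is (iii). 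I expect the main obstacle to be the regularity bootstrap and the careful integration-by-parts identity in part (ii): one must justify differentiating the fixed-point equation (handled via Lemma~\ref{Invert} applied to the $t$-derivative), keep track of all boundary contributions at $x=0$ and $x=1$ so that they reassemble exactly into the boundary part of \reff{phidef}, and handle the density argument to extend the identity $\phi\circ L=0$ from $C^1_{per}$ to all of $C_{per}$ using continuity of $\phi$.
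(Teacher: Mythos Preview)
Your overall architecture matches the paper's: part~(i) via Lemma~\ref{equiv} and assumption~\reff{kersys}, part~(ii) by pairing with $u^*$ and integrating by parts to get $\phi\circ L=0$ on a dense subspace, and part~(iii) from $\phi(D(1,\beta^0)\de_tu^0)=1$. Parts~(ii) and~(iii) are essentially as in the paper, and your computation for~(iii) is exactly the identity \reff{ortho} the paper proves.

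The genuine gap is your regularity bootstrap in~(i). You propose to ``differentiate the fixed-point relation formally in $t$'' and then invoke Lemma~\ref{Invert} to conclude $\de_tu$ exists. This is circular: the equation you would write down for $\de_tu$ contains $\de_tu$ on the right through the term $D(1,\beta^0)\de_uB(\beta^0,u^0)\de_tu$, so it is not of the form $(I-C(1,\beta^0)R)(\de_tu)=(\text{known }C^0\text{ data})$ unless you already know $\de_tu$ exists. And even if you move that term to the left, the resulting operator is $L$ itself, which has nontrivial kernel and cannot simply be inverted. The paper avoids this entirely: in the proof of Lemma~\ref{Fredholm} it was shown that $\bigl[(I-C(1,\beta^0)R)^{-1}D(1,\beta^0)\de_uB(\beta^0,u^0)\bigr]^2$ maps $C_{per}$ into $C^1$-smooth functions (this is precisely the compactness argument via the change of variables~\reff{change}). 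Any kernel element satisfies $u=(I-CR)^{-1}DB^0u$, hence $u=\bigl[(I-CR)^{-1}DB^0\bigr]^2u$, and is therefore $C^1$ immediately --- no differentiation of the equation is needed. You should replace your bootstrap by this observation.
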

\begin{proof} $(i)$
In the proof of Lemma \ref{Fredholm} we showed that, if \reff{aass}, \reff{aassnot}  and \reff{unull} and  one of the conditions \reff{nonressys}
and \reff{nonressys1} are satisfied,
then for any $u \in C_{per}(\R\times [0,1];\R^2)$ the element
$
\left[(I-C(1,\beta^0)R)^{-1}D(1,\beta^0)\partial_uB(\beta^0,u^0)\right]^2u
$
is $C^1$-smooth. Especially, any solution 
$u \in C_{per}(\R\times [0,1];\R^2)$ to
\be 
\label{line}
u-C(1,\beta(u^0))Ru =D(1,\beta^0)\partial_u B(\beta^0,u^0)u,
\ee
is $C^1$-smooth. Therefore, Lemma \ref{equiv}
(with $\ve=0$, $T=1$, $\beta=\beta^0$ and $B(\beta,u)$ replaced by $\partial_uB(\beta(u^0),u^0)u$) yields that any 
solution to \reff{line}
is a solution to \reff{linsys}, and vice versa.
Hence, assumption \reff{kersys} yields
assertion $(i)$ of the lemma.

$(ii)$ In Lemma \ref{equiv} (with $\ve=0$ and with
$\beta$ and $B(\beta,u)$ replaced by $\beta^0$ and $\partial_u(\beta^0,u^0)$, respectively) we have shown that
\be 
\label{AC}
\A(T)C(T,\beta^0)Ru= 
\A(T)D(T,\beta^0)u-u=0
\ee
for all $T>0$ and 
$u \in  C^1_{per}(\R\times[0,1];\R^2)$.
Therefore,
\begin{eqnarray*}
&&\sum_{j=1}^2\int_0^1\int_0^1
[\A_j(1)(I-C(1,\beta^0)R-D(1,\beta^0)\partial_u(\beta^0,u^0))u](t,x)
u^*_j(t,x)\,dtdx\\
&&=\sum_{j=1}^2\int_0^1\int_0^1
[\A_j(1)-\partial_uB_j(\beta^0,u^0))u](t,x)u^*_j(t,x)\,dtdx\\
&&=
\sum_{j,k=1}^2\int_0^1\int_0^1
\left((\de_ch_j(t,x)+a_j(x)\de_cu_j(t,x)+
b_{j}(t,x)u_k(t,x)\right)u^*_j(t,x)\,dtdx\\&&=\sum_{j=1}^2\left[a_j(x)\int_0^1u_j(t,x)u^*_j(t,x)\,dt
\right]_{x=0}^{x=1}\\
&&= \int_0^1(a_1(0)(r_1u_2(t,0)-u_1(t,0))u_1^*(t,0)+a_2(1)(u_2(t,1)-r_2u_1(t,1))u_2^*(t,1))\,dt
\end{eqnarray*}
for all $u \in  C^1_{per}(\R\times[0,1];\R^2)$.
Here, along with the integration by parts, we used the definition \reff{bjk} of the coefficients $b_{j}$, the definitions \reff{Bdef} and \reff{Adef} of the operators $B_j$ and $\A_j(T)$, the properties \reff{anti} 
of the operators $B^0_j=\partial_uB_j(\beta^0,u^0)$ and \reff{AC} of $\A_j(T)$ and the fact
that $u^*$ is a solution to \reff{adsys}.
If we write $v:=(I-C(1,\beta^0)R-D(1,\beta^0)\partial_u(\beta^0,u^0))u$, then $v_1(t,0)=
u_1(t,0)-r_1u_2(t,0)$ and $v_2(t,1)=u_2(t,1)-r_2u_1(t,1)$, and we get 
$$
\sum_{j=1}^2\int_0^1\int_0^1
[\A_j(1)v(t,x)u^*_j(t,x)\,dtdx
=\int_0^1(a_2(1)v_2(t,1)u_2^*(t,1)-
a_1(0)v_1(t,0)u_1^*(t,0))\,dt.
$$
Therefore, $\phi((I-C(1,\beta^0)R-D(1,\beta^0)\partial_u(\beta^0,u^0))u)=0$ for all  $u \in  C^1_{per}(\R\times[0,1];\R^2)$ and, hence,
for all $u \in  C_{per}(\R\times[0,1];\R^2)$.
In other words,
$$
\im(I-C(1,\beta^0)R-D(1,\beta^0)\partial_u(\beta^0,u^0))\subseteq
\ker \phi.
$$
On the other side, it follows from Lemma \ref{Fredholm} and from assertion $(i)$ that 
\begin{eqnarray*}
&&\codim\im(I-C(1,\beta^0)R-D(1,\beta^0)\partial_u(\beta^0,u^0))\\
&&=\dim \ker
(I-C(1,\beta^0)R-D(1,\beta^0)\partial_u(\beta^0,u^0))=1.
\end{eqnarray*}
Hence, for proving assertion $(ii)$, it remains to show that $\codim\ker \phi=1$, i.e. that $\phi\not=0$. In order to show this, we 
do the following calculations: Let $v^0:=D(1,\beta^0)\de_tu^0$.
Then $v^0_1(t,0)=v^0_2(t,1)=0$ (cf. \reff{AC}) and $\A v^0=\de_tu^0$.
Hence,
\be  
\label{ortho}
\phi(D(1,\beta^0)\de_tu^0)
=\int_0^1\int_0^1
(\de_tu^0_1u_1^*+\de_tu_2^0u_2^*)\,dtdx=1.
\ee
Here we used assumption \reff{keradsys}.
Finally, assertion $(iii)$ follows from \reff{ortho}.
\end{proof}

\begin{remark}
\label{P}\rm
It follows from \reff{ortho} that the projection, corresponding to the algebraic sum in assertion $(iii)$ of Lemma \ref{lemker}, is
$Pu:=\phi(u)D(1,\beta^0)\de_tu^0$, i.e.
$$
\ker P=\im(I-C(1,\beta^0)R-D(1,\beta^0)\partial_uB(\beta^0,u^0)),\; \im P=\mbox{span}\{D(1,\beta^0)\de_tu^0\}.
$$
\end{remark}

\subsection{Additional regularity of \boldmath$u^0$\unboldmath}
\label{Regularity}
In this subsection we will show that, if conditions  
\reff{aass}, \reff{aassnot} and \reff{unull}  and  one of the conditions \reff{nonressys} and \reff{nonressys1} are satisfied, then
not only the first partial derivatives $\partial_tu^0$ and $\partial_xu^0$ exist and are continuous (this is by assumption \reff{unull}), but also the second partial derivative $\partial_t^2u^0$ exists and is continuous. For that we use Theorem \ref{app1}.

Let us introduce the setting of Theorem \ref{app1} as follows: The Banach space $U$ is the function space $C_{per}(\R \times [0,1];\R^2)$ with its norm $\|\cdot\|_\infty$. 
The map ${\cal F}:U \to U$ is defined by
$$
{\cal F}(u):= 
u-C(1,\beta(u))Ru-D(1,\beta(u))B(\beta(u),u),
$$
where
\be 
\label{betaudef}
 [\beta(u)](t,x):=
(\partial_{u_1}b_1(x,u(t,x),\partial_{u_2}b_2(x,u(t,x)).
\ee
The element $u^0 \in U$ in Theorem \ref{app1}
is our function $u^0 \in C_{per}(\R \times [0,1];\R^2)$
from assumption \reff{unull}.
Because of \reff{unull} and of Lemma \ref{equiv}
we have ${\cal F}(u^0)=0$.  

The Lie group 
$\Gamma$ is the rotation group 
SO(2)$=S^1=\{e^{2\pi i \vp}: \vp \in \R\}$, 
and the representation is $S_\vp$ defined by \reff{repdef}.
It is easy to verify that
\be 
\label{invar}
\left.
\begin{array}{l}
S_\vp \beta(u)=\beta(S_\vp u),\;
S_\vp B(\beta,u)=B(S_\vp \beta,S_\vp u),\\
S_\vp C(T,\beta)R=C(T,S_\vp \beta)RS_\vp,
S_\vp D(T,\beta)=D(T,S_\vp \beta)S_\vp. 
\end{array}
\right\}
\ee
Therefore, we have $S_\vp\F(u)=\F(S_\vp u)$
for all $u \in U$.

In order to state a formula for 
$\F'(u)$, one has to use the definition of the map $\F$ and the chain rule. But, if $u$ is a solution to the equation $\F(u)=0$, then the formula 
for $\F'(u)$
is surprisingly simple. Indeed, from \reff{artif} it follows that,
\be 
\label{Fder}
\mbox{if $\F(u)=0$, then } \F'(u)=
I-C(1,\beta(u))R -D(1,\beta(u))\partial_u B(\beta(u),u).
\ee
In particular, $\F'(u^0)=
I-C(1,\beta^0)R -D(1,\beta^0)\partial_u B(\beta^0,u^0)$. Here we used that $\beta(u^0)=\beta^0$ (cf. \reff{betanulldef}).
Hence, Lemma \ref{Fredholm} claims that ${\cal F}'(u^0)$ is Fredholm of index zero from $U$ into $U$.
Because the map $\F$ is $C^2$-smooth, Theorem  \ref{app1} yields also that  the map $\vp \in \R \mapsto S_\vp u^0\in U$
is $C^2$-smooth, i.e. $\partial_t^2u^0$ exists and belongs to $C_{per}(\R \times [0,1];\R^2)$.
\begin{remark}
\label{counter}\rm
The following example shows that, if  
\reff{aass}, \reff{aassnot} and \reff{unull} are satisfied but neither \reff{nonressys} nor \reff{nonressys1}, then it may happen that $\de_t^2u^0$ does not exist. We take
$a_1(x)\equiv 1$, $a_2(x)\equiv -1$, $b_j(x,u)\equiv0$, $r_1=r_2=1$
and
$$
u^0_1(t,x)=\Psi(t-x), \; u^0_2(t,x)=\Psi(t+x),
$$
where  $\Psi \in C^1(\R)\setminus C^2(\R)$ is 1-periodic.
\end{remark}

\subsection{Ansatz for the unknown function \boldmath$u$\unboldmath}
\label{Ansatz}
Using Theorem \ref{appl2},
in this section we introduce new coordinates   
close to the orbit
$$
{\cal O}(u^0):=\{S_\vp u^0 \in C_{per}(\R \times [0,1];\R^2)  : \vp \in \R\}
$$
of $u^0$.
We introduce the setting of Theorem \ref{appl2} as follows: The Banach space $U$ is again $C_{per}(\R \times [0,1];\R^2)$ with its norm $\|\cdot\|_\infty$. The Lie group 
$\Gamma$ is the rotation group 
$SO(2)=S^1=\{e^{2\pi i \vp}: \vp \in \R\}$, 
with the representation  $S_\vp$ defined in \reff{repdef}. Hence, the tangential space to 
${\cal O}(u^0)$ at the point $u^0$ is (cf. Lemma \ref{lemker} $(i)$)
\be 
\label{kerT}
T_{u^0}{\cal O}(u^0)=\mbox{span} \{\partial_t u^0\}=\ker (I-C(1,\beta^0)R-D(1,\beta^0)\partial_uB(\beta^0,u^0)).
\ee
We define
\be 
\label{Vdef}
V:=\left\{u \in C_{per}(\R \times [0,1];\R^2):\;
\sum_{j=1}^2\int_0^1\int_0^1u_j(t,x)u_j^*(t,x)\,dtdx=
0\right\}.
\ee
Then $V$ is a closed subspace in $C_{per}(\R \times [0,1];\R^2)$, and assumption \reff{keradsys} yields that
\be 
\label{oplus}
C_{per}(\R \times [0,1];\R^2)
=T_{u^0}{\cal O}(u^0) \oplus V,
\ee
Hence, Theorem \ref{appl2}  implies that
there exists an open neighborhood ${\cal V}_0 \subset V$ of zero such that $\{S_\gamma(u^0+v)\in U:\, \gamma \in \Gamma, \; v \in  
{\cal V}_0\}$ is an open neighborhood of ${\cal O}(u^0)$.

Since Theorems \ref{sys1} and \ref{sys2} are about solutions to \reff{sys} such that $u\approx {\cal O}(u^0)$ in the sense of $\|\cdot\|_\infty$, 
we are allowed to make the following ansatz in \reff{abstract}:
\be 
\label{ansatzeq}
u=S_\vp(u^0+v), \; v \in V.
\ee
Moreover, in what follows we choose the artificial $\beta$ in \reff{abstract} depending on the new state parameter $\vp$ as 
(cf. notion \reff{betanulldef})
\be 
\label{betavpdef}
\beta=S_\vp\beta^0,
\mbox{ i.e. }
\beta(t,x)=\left(\partial_{u_1}b_1(x,u^0(t+\vp,x),
\partial_{u_1}b_1(x,u^0(t+\vp,x)))\right).
\ee
We insert \reff{ansatzeq} and \reff{betavpdef} into \reff{abstract}, apply $S_{\vp}^{-1}$ on the resulting equation, use
\reff{invar} and get 
$$
u^0+v=C(T,\beta^0)(R(u^0+v)+\ve S_\vp^{-1}g)
+D(T,\beta^0)(B(\beta^0,u^0+v)+\ve S_\vp^{-1}f).
$$
Because of $u^0=C(1,\beta^0)Ru^0
+D(1,\beta^0)(B(\beta^0,u^0)$
and the Taylor formulas 
\begin{eqnarray*}
B(\beta^0,u^0+v)&=&B(\beta^0,u^0)+\partial_uB(\beta^0,u^0)v+
\int_0^1(1-s)\partial^2_uB(\beta^0,u^0+sv)(v,v)\,ds,\\
C(T,\beta^0)w&=&C(1,\beta^0)w+
(T-1)\int_0^1\partial_TC(1+s(T-1),\beta^0)w\,ds,\\
D(T,\beta^0)u&=&
D(1,\beta^0)u+
(T-1)\int_0^1\partial_TD(1+s(T-1),\beta^0)u\,ds,
\end{eqnarray*}
this is equivalent to
\begin{eqnarray}
\label{abstract1}
v&=&C(T,\beta^0)Rv+D(T,\beta^0)\left(
\partial_uB(\beta^0,u^0)v+
\int_0^1(1-s)
\partial^2_uB(\beta^0,u^0+sv)(v,v)ds\right)\nonumber\\
&&+(T-1)\int_0^1\left(\partial_TC(1+s(T-1),\beta^0)Ru^0+\partial_TD(1+s(T-1),\beta^0)B(\beta^0,u^0)\right)ds\nonumber\\
&&+\ve\left(C(T,\beta^0)S_\vp^{-1}g
+D(T,\beta^0)S_\vp^{-1}f\right).
\end{eqnarray}
Here we use the notion $S_\vp$ also for the shift operators on the space $C_{per}(\R;\R^2)$, i.e. $[S_\vp^{-1}g](t):=g(t-\vp)$
for $\vp \in \R$ and $g \in C_{per}(\R;\R^2)$.

The notions $\partial_TC(1+s(T-1),\beta^0)Ru^0$ and 
$\partial_TD(1+s(T-1),\beta^0)B(\beta^0,u^0)$
in \reff{abstract1} have to be used with some care: The map $T\mapsto C(T,\beta)$ is not continuous from $(0,\infty)$ into ${\cal L}\left(C_{per}(\R;\R^2);C_{per}(\R\times[0,1];\R^2)\right)
$ (in the sense of the uniform operator norm).
But the map $T\mapsto C(T,\beta)w$ is $C^1$-smooth from $(0,\infty)$ into 
$C_{per}(\R\times[0,1];\R^2)$
if $\beta \in C_{per}(\R\times[0,1];\R^2)$ and $w \in C_{per}(\R;\R^2)$ are both $C^1$-smooth. This follows from the definitions \reff{aldef} and \reff{Cdef}, which lead, for example, for the first component, to
$$
[C_1(T,\beta)w](t,x)=\exp\left(-\int_0^x
\frac{\beta_1(t+\al_1(x,0)/T,y)}{a_1(y)}\,dy\right)w(t+\al_1(x,0)/T).
$$
In particular, $\beta^0$ and $u^0$ are $C^1$-smooth, hence the limit
$$
\lim_{\tau \to 0}\frac{C(T+\tau,\beta^0)Ru^0-
C(T,\beta^0)Ru^0}{\tau}
$$
exists in $C_{per}(\R\times[0,1];\R^2)$, and this is what we denote by $\partial_TC(T,\beta^0)Ru^0$, for the sake of shortness (and similarly for
$\partial_TD(T,\beta^0)B(\beta^0,u^0)$).

We end up with the following: Solutions $u$ to \reff{sys}  with $u\approx {\cal O}(u^0)$ in the sense of $\|\cdot\|_\infty$ and with \reff{ansatzeq}
are solutions to \reff{abstract1} and vice versa.

\begin{remark}
\label{sub}\rm
If the minimal time-period of $u^0$ is $1/n$ with $n \in N$, then any solution $(\vp,v)$ to \reff{abstract1} also $(\vp+1/n,v)$,
$(\vp+2/n,v)$ etc. are solutions to  \reff{abstract1}, but as solutions to 
\reff{sys}  via
\reff{ansatzeq} they are not different.

If the minimal time-period of the forcings $f$ and $g$ is $1/m$ with $m \in N$, $m>1$, then again along with any solution $(\vp,v)$ to \reff{abstract1} also $(\vp+1/m,v)$,
$(\vp+2/m,v)$ etc. are solutions to  \reff{abstract1}, and as solutions to 
\reff{sys}  via
\reff{ansatzeq} these solutions now are  various, in general.
\end{remark}

\subsection{A priori estimate}
\label{Apriori}
In this subsection we prove the following statement.
\begin{lemma}
\label{apri}
Suppose \reff{aass}, \reff{aassnot}, \reff{unull}, \reff{kersys},
\reff{keradsys}, and
assume that  one of the conditions \reff{nonressys}
and \reff{nonressys1} is satisfied.
Then there exist $\ve_0>0$ and $c>0$ such that for all solutions $(\vp,v) \in \R \times V$ to \reff{abstract1}  with
$\ve+|T-1|+\|v\|_\infty \le \ve_0$
we have
$$
|T-1|+\|v\|_\infty \le c\ve.
$$
\end{lemma}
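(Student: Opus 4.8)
The plan is to prove the a priori estimate by applying the projection $P$ from Remark \ref{P} and its complement $I-P$ to equation \reff{abstract1}, and to use a compactness/contradiction argument to capture the scaling $|T-1|+\|v\|_\infty\le c\ve$. First I would rewrite \reff{abstract1} schematically as $v = L(T)v + (T-1)\,m(T) + \ve\,n(\vp,T) + N(v)$, where $L(T):=C(T,\beta^0)R + D(T,\beta^0)\partial_uB(\beta^0,u^0)$ so that $I-L(1) = I - C(1,\beta^0)R - D(1,\beta^0)\partial_uB(\beta^0,u^0)$ is the Fredholm operator of Lemma \ref{Fredholm}, $m(T)$ collects the $\partial_TC$- and $\partial_TD$-terms evaluated along $[1,T]$, $n(\vp,T)$ collects the $\ve$-forcing terms, and $N(v) = D(T,\beta^0)\int_0^1(1-s)\partial_u^2B(\beta^0,u^0+sv)(v,v)\,ds$ is the quadratic remainder, so $\|N(v)\|_\infty \le \mathrm{const}\,\|v\|_\infty^2$. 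The bounds \reff{besch} and the continuity \reff{conti} give that $L(T)$, $m(T)$, $n(\vp,T)$ are uniformly bounded for $|T-1|$ small, $\ve$ small, $\vp\in\R$.

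Next I would argue by contradiction: suppose there is a sequence of solutions $(\vp_k,v_k)$ with parameters $(\ve_k,T_k)$ such that $\ve_k+|T_k-1|+\|v_k\|_\infty\to 0$ but $(|T_k-1|+\|v_k\|_\infty)/\ve_k\to\infty$. Put $\mu_k:=|T_k-1|+\|v_k\|_\infty$ and rescale: $\hat v_k:=v_k/\mu_k\in V$ with $\|\hat v_k\|_\infty\le 1$, $\hat\sigma_k:=(T_k-1)/\mu_k\in[-1,1]$. Dividing the rewritten equation by $\mu_k$ gives
\[
\hat v_k = L(T_k)\hat v_k + \hat\sigma_k\, m(T_k) + \frac{\ve_k}{\mu_k}\,n(\vp_k,T_k) + \frac{1}{\mu_k}N(v_k).
\]
Here $\ve_k/\mu_k\to 0$ by assumption, and $\|N(v_k)\|_\infty/\mu_k \le \mathrm{const}\,\|v_k\|_\infty^2/\mu_k \le \mathrm{const}\,\|v_k\|_\infty\to 0$. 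Passing to subsequences, $\hat\sigma_k\to\hat\sigma\in[-1,1]$ and $T_k\to 1$. The point where I would invoke compactness is that $\hat v_k = L(1)\hat v_k + (\text{small})$ modulo the non-uniform-continuity of $T\mapsto L(T)$: since $(I-L(1))$ is Fredholm, $L(1)$ is not compact, but from the proof of Lemma \ref{Fredholm} we know $((I-C(1,\beta^0)R)^{-1}D(1,\beta^0)\partial_uB(\beta^0,u^0))^2$ is compact, so applying $(I-C(1,\beta^0)R)^{-1}$ (Lemma \ref{Invert}) and iterating once, the image of $\hat v_k$ under a compact operator plus a vanishing remainder converges; hence a subsequence of $\hat v_k$ converges in $C_{per}$ to some $\hat v\in V$ satisfying $(I-L(1))\hat v = \hat\sigma\, m(1)$, with $|\hat\sigma|+\|\hat v\|_\infty$ still attaining the value $1$ in the limit (the combined norm is continuous under the rescaling and the convergence).

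Now I would apply the functional $\phi$ from \reff{phidef}: since $\im(I-L(1))=\ker\phi$ (Lemma \ref{lemker}(ii)), applying $\phi$ to $(I-L(1))\hat v=\hat\sigma\,m(1)$ yields $\hat\sigma\,\phi(m(1))=0$. The key computation — and I expect this to be the main obstacle — is to show $\phi(m(1))\ne 0$, i.e. that $\phi$ does not annihilate the $T$-derivative term $m(1)=\int_0^1(\partial_TC(1,\beta^0)Ru^0+\partial_TD(1,\beta^0)B(\beta^0,u^0))\,ds = \partial_TC(1,\beta^0)Ru^0+\partial_TD(1,\beta^0)B(\beta^0,u^0)$; differentiating the identity $\A(T)(C(T,\beta^0)Ru^0+D(T,\beta^0)B(\beta^0,u^0))=u^0$ in $T$ and using \reff{AC} together with $\A(T)=\A(1)-(1-1/T)\partial_t$ should identify $(I-L(1))^{-1}$-preimage structure and reduce $\phi(m(1))$ to $-\int_0^1\int_0^1(\partial_t^2u^0_j)u^*_j$ up to the boundary terms, which by the normalization \reff{keradsys} (in the form involving $\partial_tu^0$ and $u^*$, after an integration by parts) is nonzero. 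Granting $\phi(m(1))\ne 0$, we get $\hat\sigma=0$; then $(I-L(1))\hat v=0$, so $\hat v\in\ker(I-L(1))=\mathrm{span}\{\partial_tu^0\}$ by Lemma \ref{lemker}(i), but $\hat v\in V$ and $V$ is complementary to $\mathrm{span}\{\partial_tu^0\}$ (by \reff{oplus}, after noting $\partial_tu^0\notin V$ since $\sum_j\int\int\partial_tu^0_j u^*_j=1\ne0$ and more precisely the projection argument), hence $\hat v=0$. But then $|\hat\sigma|+\|\hat v\|_\infty=0$, contradicting that this limit equals $1$. This contradiction establishes the existence of $\ve_0>0$ and $c>0$ with $|T-1|+\|v\|_\infty\le c\ve$.
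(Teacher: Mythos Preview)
Your overall strategy---contradiction, rescaling by $\mu_k=|T_k-1|+\|v_k\|_\infty$, extracting a convergent subsequence via compactness, passing to the limit equation $(I-L(1))\hat v=\hat\sigma\,m(1)$, applying $\phi$, and using $V\cap\ker(I-L(1))=\{0\}$---is exactly the paper's approach, and the endgame (steps with $\phi$, $\hat\sigma=0$, $\hat v=0$) is correct.

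The one genuine gap is the compactness step. You write $\hat v_k=L(1)\hat v_k+(\text{small})$ ``modulo the non-uniform continuity of $T\mapsto L(T)$'' and then invoke compactness of $\bigl((I-C(1,\beta^0)R)^{-1}D(1,\beta^0)\partial_uB(\beta^0,u^0)\bigr)^2$. But $(L(T_k)-L(1))\hat v_k$ is \emph{not} small in general---that is precisely what non-uniform continuity means---so you cannot replace $L(T_k)$ by $L(1)$ in the equation for $\hat v_k$, and compactness of the operator at $T=1$ does not directly yield a convergent subsequence. The paper avoids this by never passing to $T=1$ before extracting convergence: it works with the operators at $T_k$ throughout, uses Lemma~\ref{Invert} to get $(I-C(T_k,\beta^0)R)^{-1}$ uniformly bounded, rewrites the equation so that $\bigl(I-((I-C(T_k,\beta^0)R)^{-1}D(T_k,\beta^0)\partial_uB(\beta^0,u^0))^2\bigr)w_k$ converges, and then uses that for \emph{each fixed} $k$ the squared operator is compact together with a diagonal subsequence argument to extract $w_k\to w_0$. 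Only after $w_k$ itself converges does strong continuity \reff{conti} let you pass to $T=1$ in the limit equation.

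A minor correction: the identity you differentiate should be $\A(T)\bigl(C(T,\beta^0)Ru^0+D(T,\beta^0)B(\beta^0,u^0)\bigr)=B(\beta^0,u^0)$ (from \reff{AC}), not $=u^0$; the paper's computation of $\phi(m(1))=1$ uses this together with $\partial_T\A(T)|_{T=1}=-\partial_t$ and the normalization \reff{keradsys} directly, without an extra integration by parts.
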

\begin{proof}
Suppose the contrary. Then there exists a sequence $(\ve_k,T_k,\vp_k,v_k) \in (0,\infty) \times
\R^2\times V$, $k \in \N$, of solutions to \reff{abstract1} such that
$$
\ve_k+|T_k-1|+\|v_k\|_\infty+
\frac{\ve_k}{|T_k-1|+\|v_k\|_\infty}
\to 0 \mbox{ for } k \to \infty.
$$
Without loss of generality, we may assume that $\vp_k \in [0,1]$ for all $k$. Hence,  we may assume (by choosing an appropriate subsequence) that there exists $\vp_0$ such that
$$
\vp_k \to \vp_0 \mbox{ for } k \to \infty.
$$
Similarly, without loss of generality we may assume that there exists $\tau_0$ such that
$$
\tau_k:=\frac{T_k-1}{|T_k-1|+\|v_k\|_\infty} \to \tau_0 \mbox{ for } k \to \infty.
$$ 
If we  divide equation \reff{abstract1} by $|T_k-1|+\|v_k\|_\infty$ and set  $w_k:=v_k/(|T_k-1|+\|v_k\|_\infty)$, then we get
\begin{eqnarray*}
&&w_k-C(T_k,\beta^0)Rw_k\\
&&=D(T_k,\beta^0)
\left(\partial_uB(\beta^0,u^0)w_k+
\int_0^1(1-s)
\partial^2_uB(\beta^0,u^0+sv_k)(v_k,w_k)ds\right)\nonumber\\
&&+\tau_k\int_0^1\left(\partial_TC(1+s(T_k-1),\beta^0)Ru^0+\partial_TD(1+s(T_k-1),\beta^0)B(\beta^0,u^0)\right)ds\nonumber\\
&&+\frac{\ve_k}{|T_k-1|+\|v_k\|_\infty}\left(C(T_k,\beta^0)S_{\vp_k}^{-1}g
+D(T_k,\beta^0)S_{\vp_k}^{-1}f\right).
\end{eqnarray*}
Hence, for $k \to \infty$ we get the convergence
$$
(I-C(T_k,\beta^0)R-D(T_k,\beta^0)\partial_uB(\beta^0,u^0))w_k\to
\tau_0\left(\partial_TC(1,\beta^0)Ru^0+\partial_TD(1,\beta^0)B(\beta^0,u^0)\right)
$$
in $C_{per}(\R\times[0,1];\R^2)$.
From Lemma \ref{Invert} it follows that 
$$
\Big(I-(I-C(T_k,\beta^0))^{-1}D(T_k,\beta^0)\partial_uB(\beta^0,u^0)\Bigr)w_k \mbox{ converges in } C_{per}(\R\times[0,1];\R^2).
$$
Multiplying by $I+(I-C(T_k,\beta^0))^{-1}D(T_k,\beta^0)\partial_uB(\beta^0,u^0)$, we get that also
\be 
\label{conve}
\Big(I-((I-C(T_k,\beta^0))^{-1}D(T_k,\beta^0)\partial_uB(\beta^0,u^0))^2\Big)w_k \mbox{ converges in } C_{per}(\R\times[0,1];\R^2).
\ee
Here we used that $\|(I-C(T_k,\beta^0))^{-1}u\|_\infty \le \mbox{const}\|u\|_\infty$ (cf. Lemma \ref{Invert}) and \reff{besch}.

But for any $k$ the operators $((I-C(T_k,\beta^0))^{-1}D(T_k,\beta^0)\partial_uB(\beta^0,u^0))^2$ are compact in $C_{per}(\R\times[0,1];\R^2)$
(cf. the proof of Lemma \ref{Fredholm}).
Therefore, there exists a subsequence $w^1_1,w^1_2,\ldots$ of the sequence $w_1,w_2,\ldots$ such that
$$
\Big((I-C(T_1,\beta^0))^{-1}D(T_1,\beta^0)\partial_uB(\beta^0,u^0)\Big)^2w^1_k \mbox{ converges in } C_{per}(\R\times[0,1];\R^2).
$$
Similarly, there exists a subsequence $w^2_1,w^2_2,\ldots$ of the sequence $w^1_1,w^1_2,\ldots$ such that
$$
\Big((I-C(T_2,\beta^0))^{-1}D(T_2,\beta^0)\partial_uB(\beta^0,u^0)\Big)^2w^2_k \mbox{ converges in } C_{per}(\R\times[0,1];\R^2).
$$
Proceeding this way and taking the "diagonal" subsequence $w^1_1,w^2_2,\ldots$, which is also a subsequence of $w_1,w_2,\ldots$, we get that
$$
\Big((I-C(T_k,\beta^0))^{-1}D(T_k,\beta^0)\partial_uB(\beta^0,u^0)\Big)^2w^k_k \mbox{ converges in } C_{per}(\R\times[0,1];\R^2).
$$
Hence, \reff{conve} yields that without loss of generality we may assume 
that there exists $w_0 \in V$ such that  $\|w_k-w_0\|_\infty \to 0$, and, hence,
$\|C(T_k,\beta^0)Rw_k-C(1,\beta^0)Rw_0\|_\infty \to 0$ and $\|D(T_k,\beta^0)\partial_uB(\beta^0,u^0)w_k-D_k(1,\beta^0)\partial_uB(\beta^0,u^0)w_0\|_\infty \to 0$
for $k \to \infty$ (cf. \reff{conti}).
It follows that
\begin{eqnarray} 
\label{contra1}
&&(I-C(1,\beta^0)R-D(1,\beta^0)\partial_uB(\beta^0,u^0))w_0\nonumber\\
&&=
\tau_0\left(\partial_TC(1,\beta^0)Ru^0+\partial_TD(1,\beta^0)B(\beta^0,u^0)\right)
\end{eqnarray}
but 
\be 
\label{contra2}
|\tau_0|+\|w_0\|_\infty=1.
\ee
If we apply the functional $\phi$ to both sides of \reff{contra1}, then Lemma \ref{lemker} $(ii)$ 
implies that
\be 
\label{norma1}
\tau_0 \phi(\partial_TC(1,\beta^0)Ru^0 +\partial_TD(1,\beta^0)B(\beta^0,u^0))=0.
\ee
In order to calculate
$\phi\left[\partial_TC(1,\beta^0)Ru^0 +\partial_TD(1,\beta^0)B(\beta^0,u^0)\right]$,
we proceed as follows:
The definitions \reff{Bdef} and \reff{Rdef}--\reff{Ddef} yield that
$
[C_1(T,\beta^0)Ru^0 +D_1(T,\beta^0)B(\beta^0,u^0)](t,0)=r_2u^0_2(t,0).
$
Hence, we have $[\de_TC_1(1,\beta^0)Ru^0 +\de_TD_1(1,\beta^0)B(\beta^0,u^0)](t,0)=0$. Similarly one shows that
$[\de_TC_2(1,\beta^0)Ru^0 +\de_TD_2(1,\beta^0)B(\beta^0,u^0)](t,1)=0$.
Therefore,
\begin{eqnarray*}
&&\phi\left[\de_TC_1(1,\beta^0)Ru^0 +\de_TD_1(1,\beta^0)B(\beta^0,u^0)\right]\\
&&=\sum_{j=1}^2\int_0^1\int_0^1\A_j(1)(
\partial_TC(1,\beta^0)Ru^0 +\partial_TD(1,\beta^0)B(\beta^0,u^0)u_j^* \,dtdx.
\end{eqnarray*}
On the other hand, the identity \reff{AC} implies 
$$
\de_T\A(1)C(1,\beta^0)Ru+\A(1)\de_TC(1,\beta^0)Ru=
\de_T\A(1)D(1,\beta^0)u+\A(1)\de_TD(1,\beta^0)u
=0
$$
for all $u \in  C_{per}(\R\times[0,1];\R^2)$ such that $\de_t^2u$ exist and is continuous. 
Therefore,
\begin{eqnarray*}
&&\phi\left[\de_TC_1(1,\beta^0)Ru^0 +\de_TD_1(1,\beta^0)B(\beta^0,u^0)\right]\\
&&=-\sum_{j=1}^2\int_0^1\int_0^1\A'_j(1)\left[
C(1,\beta^0)Ru^0 +D(1,\beta^0)B(\beta^0,u^0\right]u_j^* \,dtdx
\nonumber\\
&&=\sum_{j=1}^2\int_0^1\int_0^1\de_t\left[
C_j(1,\beta^0)Ru^0 +D_j(1,\beta^0)B(\beta^0,u^0)\right]u_j^*\, dtdx.
\end{eqnarray*}
But \reff{artif} implies that
$$
\de_t\left[C(1,\beta^0)Ru^0 +D(1,\beta^0)B(\beta^0,u^0)\right]=
C(1,\beta^0)R\de_tu^0 +D(1,\beta^0)\de_uB(\beta^0,u^0)\de_tu^0=\de_tu^0,
$$
hence
\be 
\label{norma}
\phi\left[\de_TC_1(1,\beta^0)Ru^0 +\de_TD_1(1,\beta^0)B(\beta^0,u^0)\right]
=1.
\ee
Here we used assumption \reff{keradsys}.

From \reff{norma1} and \reff{norma} it follows 
that $\tau_0=0$, and, therefore, \reff{contra1} implies that
$[I-C(1,\beta^0)R-D(1,\beta^0)\partial_uB(\beta^0,u^0)]w_0=0$.
Hence, \reff{kerT} yields $w_0 \in T_{u^0}{\cal O}(u^0)$. But $v \in V$, hence \reff{oplus} yields $v=0$, which contradicts to
\reff{contra2}.
\end{proof}

\subsection{Proof of Theorem \ref{sys1}}
\label{Proofsys1}
Theorems \ref{sys1} and \ref{sys2} concern solutions 
$(\ve,T,u) \in (0,\infty)^2\times
 C^1_{per}(\R\times[0,1];\R^2)$ 
to \reff{sys} with $\ve \approx 0$ and $T\approx 1$ and $u \approx {\cal O}(u^0)$, i.e. solutions 
$(\ve,T,\vp,v) \in  (0,\infty)^2\times [0,1]\times V$ 
to  \reff{abstract1} with $\ve \approx 0$, $T\approx 1$ and $v \approx 0$. Hence, in order to prove Theorems \ref{sys1} and \ref{sys2}, Lemma \ref{apri} allows to make the following ansatz in \reff{abstract1}:
\be 
\label{ansatz2}
T=1+\ve \tau,\; v=\ve w \mbox{ with } \tau \in \R, \; w \in V.
\ee
Inserting \reff{ansatz2} into \reff{abstract1} and dividing by $\ve$, we get the following equation:
\begin{eqnarray}
\label{abstract2}
F(\ve,\tau,\vp,w)&:=&\left(I-C(1+\ve \tau,\beta^0)R-D(1+\ve \tau,\beta^0)
\partial_uB(\beta^0,u^0)\right)w\nonumber\\
&&-\ve D(1+\ve \tau,\beta^0)\int_0^1(1-s)
\partial^2_uB(\beta^0,u^0+s\ve w)(w,w)\,ds\nonumber\\
&&-\tau\int_0^1\left(\partial_TC(1+s\ve \tau,\beta^0)Ru^0+\partial_TD(1+s\ve \tau,\beta^0)B(\beta^0,u^0)\right)ds\nonumber\\
&&-C(1+\ve \tau,\beta^0)S_\vp^{-1}g
-D(1+\ve \tau,\beta^0)S_\vp^{-1}f=0.
\end{eqnarray}
In particular, for $\ve=0$ we have
\begin{eqnarray}
\label{epsnulleq}
&&(I-C(1,\beta^0)R-D(1,\beta^0)\partial_uB(\beta^0,u^0))w\nonumber\\
&&=\tau\left(\partial_TC(1,\beta^0)Ru^0
+\partial_TD(1,\beta^0)
B(\beta^0,u^0)\right)\nonumber\\
&&\,\;\;\;+C(1,\beta^0)S_\vp^{-1}g
+D(1,\beta^0)S_\vp^{-1}f.
\end{eqnarray}
Because of Lemma \ref{lemker} $(ii)$, this equation is equivalent to the two equations
$PF(0,\tau,\vp,w)=0$ and $(I-P)F(0,\tau,\vp,w)=0$, 
and because of \reff{norma},
the first equation is equivalent to the scalar equation
\be 
\label{IPeg}
\tau+\phi(C(1,\beta^0)S_\vp^{-1}g+D(1,\beta^0)S_\vp^{-1}f)=0.
\ee
In order to determine $\phi(C(1,\beta^0)S_\vp^{-1}g+D(1,\beta^0)S_\vp^{-1}f)$,
we calculate as above. We have
\begin{eqnarray*}
&&[C_1(1,\beta^0)S_\vp^{-1}g+D_1(1,\beta^0)S_\vp^{-1}f](t,0)=g_1(t-\vp),\\
&&[C_2(1,\beta^0)S_\vp^{-1}g+D_2(1,\beta^0)S_\vp^{-1}f](t,1)=g_2(t-\vp).
\end{eqnarray*}
Therefore,
\begin{eqnarray*}
&&\phi(C(1,\beta^0)S_\vp^{-1}g+D(1,\beta^0)S_\vp^{-1}f)\\
&&+\int_0^1\left(
a_2(1)g_2(t-\vp)u_2^*(t,1)
-a_1(0)g_1(t-\vp)u_1^*(t,0)\right)dt\\
&&=\sum_{j=1}^2\int_0^1\int_0^1[\A_j(C(1,\beta^0)S_\vp^{-1}g+D(1,\beta^0)S_\vp^{-1}f)](t,x)u_j^*(t,x)\,dtdx\\
&&=\sum_{j=1}^2\int_0^1\int_0^1
f_j(t-\vp,x)u_j^*(t,x)dtdx,
\end{eqnarray*}
i.e. (cf. \reff{Phidef})
\be 
\label{Phieq}
\phi(C(1,\beta^0)S_\vp^{-1}g+D(1,\beta^0)S_\vp^{-1}f)=-\Phi(\vp).
\ee
Hence, equation \reff{IPeg} is just the phase equation $\Phi(\vp)=\tau$.

In order to prove Theorem \ref{sys1}, we  fix 
 $(\tau,\vp)=(\tau_0,\vp_0)$ to be a solution to the phase equation \reff{IPeg}, and insert it into $(I-P)F(0,\tau,\vp,w)=0$. This way we get 
\begin{eqnarray*}
&&(I-C(1,\beta^0)R-D(1,\beta^0)\partial_uB(\beta^0,u^0))w\\
&&=(I-P)\left(\tau_0\left(\partial_TC(1,\beta^0)Ru^0+\partial_TD(1,\beta^0)B(\beta^0,u^0)\right)+C(1,\beta^0)S_{\vp_0}^{-1}g
+D(1,\beta^0)S_{\vp_0}^{-1}f\right).
\end{eqnarray*}
This equation has a unique solution $w=w_0 \in V$. Indeed, the linear operator $I-C(1,\beta^0)-D(1,\beta^0)\partial_uB(\beta^0,u^0)$ is injective on $V$ because of
\reff{kerT} and
\reff{oplus}, and it is surjective from $V$ onto $\im P$, again because of  \reff{kerT}
and \reff{oplus}.

Let us summarize: Equation \reff{abstract2} has a solution $\ve=0$, $\tau=\tau_0$, $\vp=\vp_0$, $w=w_0$, and we are going to solve
\reff{abstract2} for $\ve\approx 0$, $\tau\approx \tau_0$, $\vp\approx\vp_0$ and $w\approx w_0$ by means of Theorem \ref{app3}.
To this end, we put \reff{abstract2} into the setting of Theorem \ref{app3}. The Banach spaces $U_1$ and $U_2$ are $U_1=\R\times V$ and  $U_2=C_{per}(\R\times[0,1];\R^2)$ with  norms $\|(\vp,w)\|_1=|\vp|+\|w\|_\infty$ and $\|\cdot\|_2=\|\cdot\|_\infty$, respectively. The  vector space $\Lambda=\R^2$ is normed by $\|(\ve,\tau)\|_2=|\ve|+|\tau|$. The map
$\F$ from Theorem \ref{app3}  is just the map $F$ , which is defined in  \reff{abstract2}.
The  control parameter
is $(\ve,\tau)$, and  the state parameter is $(\vp,w)$.
The starting solution $(\lambda_0,u_0)$ is
$(0,\tau_0,\ve_0,w_0)$, i.e. we are going to solve the equation $F(\ve,\tau,\vp,w)=0$ with $(\ve,\tau)\approx (0,\tau_0)$ with respect to
$(\vp,w)\approx (\vp_0,w_0)$.

It is easy to verify that conditions \reff{ass1} and \reff{ass2} of Theorem \ref{app3}  are satisfied. In order to verify the remaining conditions \reff{ass3}--\reff{ass5}, we calculate the partial derivatives
\begin{eqnarray}
\label{dw}
\label{wder}
\partial_wF(\ve,\tau,\vp,w)u&=&\left(I-C(1+\ve \tau,\beta^0)R-D(1+\ve \tau,\beta^0)
\partial_uB(\beta^0,u^0)\right)u\nonumber\\
&&-2\ve D(1+\ve \tau,\beta^0)\int_0^1(1-s)
\partial^2_uB(\beta^0,u^0+s\ve w)(u,w)\,ds\nonumber\\
&&-\ve^2D(1+\ve \tau,\beta^0)\int_0^1(1-s)s
\partial^3_uB(\beta^0,u^0+s\ve w)(w,w,u)\,ds,\\
\label{vphder}
\partial_\vp F(\ve,\tau,\vp,w)
&=&C(1+\ve \tau,\beta^0)S_\vp^{-1}g'+
D(1+\ve \tau,\beta^0)S_\vp^{-1}\partial_tf
\end{eqnarray}
and 
\begin{eqnarray*}
	\partial_w\partial_\vp F(\ve,\tau,\vp,w)u
	&=&0,\nonumber\\
	\partial^2_\vp F(\ve,\tau,\vp,w)
	&=&-C(1+\ve \tau,\beta^0)S_\vp^{-1}g''-
	D(1+\ve \tau,\beta^0)
	S_\vp^{-1}\partial^2_tf,\\
	\partial^2_wF(\ve,\tau,\vp,w)(u,v)&=&
-2\ve D(1+\ve \tau,\beta^0)\int_0^1(1-s)
\partial^2_uB(\beta^0,u^0+s\ve w)(u,v)\,ds\nonumber\\
&&-3\ve^2D(1+\ve \tau,\beta^0)\int_0^1(1-s)s
\partial^3_uB(\beta^0,u^0+s\ve w)(w,u,v)\,ds\\
&&-\ve^3D(1+\ve \tau,\beta^0)\int_0^1(1-s)s^2
\partial^4_uB(\beta^0,u^0+s\ve w)(w,w,u,v)\,ds.
\end{eqnarray*}
Here we used that $\partial_\vp[S_\vp^{-1}g](t)=
\partial_\vp g(t-\vp)=-g'(t-\vp)=-[S_\vp^{-1}g'](t)$ and similarly for $\partial^2_\vp[S_\vp^{-1}g]$, $\partial_\vp[S_\vp^{-1}f]$ and 
$\partial^2_\vp[S_\vp^{-1}f]$.
Using these formulas, it is easy to verify
that the assumption \reff{ass5} of Theorem \ref{app3}  is satisfied.

It remains to verify conditions \reff{ass3} and \reff{ass4} of Theorem \ref{app3}. Both these assumptions concern the operators
\begin{eqnarray*}
\label{dw1}
&&\partial_{(\vp,w)}F(\ve,\tau,\vp_0,w_0)(\vp,w)\\
&&=
\vp\partial_\vp F(\ve,\tau,\vp_0,w_0)+
\partial_wF(\ve,\tau,\vp_0,w_0)w\\
&&=\vp\left(C(1+\ve \tau,\beta^0)S_{\vp_0}^{-1}g'+
D(1+\ve \tau,\beta^0)S_{\vp_0}^{-1}\partial_tf\right)\\
&&\;\;\;\;\;\;+\left(I-C(1+\ve \tau,\beta^0)R-D(1+\ve \tau,\beta^0)
\partial_uB(\beta^0,u^0)\right)w\nonumber\\
&&\;\;\;\;\;\;-2\ve D(1+\ve \tau,\beta^0)\int_0^1(1-s)
\partial^2_uB(\beta^0,u^0+s\ve w_0)(w_0,w)\,ds\nonumber\\
&&\;\;\;\;\;\;-\ve^2D(1+\ve \tau,\beta^0)\int_0^1(1-s)s
\partial^3_uB(\beta^0,u^0+s\ve w_0)(w_0,w_0,w)\,ds
\end{eqnarray*}
with $(\ve,\tau)\approx (0,\tau_0)$.
In order to prove, that these conditions are satisfied for all $(\ve,\tau)\approx (0,\tau_0)$, unfortunately,
it is not sufficient to prove that these assumptions are satisfied for $\ve=0$ and $\tau=\tau_0$. The reason for that is that the operator $\partial_{w}F(\ve,\tau,\vp_0,w_0)$ does not depend continuously (with respect to the uniform operator norm) on $\ve$ and $\tau$.

First, let us verify condition \reff{ass4} of Theorem \ref{app3}.
\begin{lemma}
\label{apri1}
Suppose \reff{aass}, \reff{aassnot}, \reff{unull}, \reff{kersys},
\reff{keradsys}, \reff{beta}, and assume
that one of the conditions \reff{nonressys}
and \reff{nonressys1} is satisfied.
Then there exist $\ve_0>0$ and $c>0$ such that for all $(\ve,\tau,\vp,w) \in [0,\infty)\times\R^2 \times V$ with
$
\ve+|\tau-\tau_0| \le \ve_0
$
we have
$$
\|\partial_{(\vp,w)}F(\ve,\tau,\vp_0,w_0)(\vp,w)\|_\infty \ge c(|\vp|+\|w\|_\infty).
$$
\end{lemma}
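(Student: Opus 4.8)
The plan is to argue by contradiction, staying as close as possible to the proof of Lemma~\ref{apri}. Assume the assertion is false; then there exists a sequence $(\ve_k,\tau_k,\vp_k,w_k)\in[0,\infty)\times\R^2\times V$ with $\ve_k+|\tau_k-\tau_0|\to0$, with $|\vp_k|+\|w_k\|_\infty=1$ for all $k$, and with $\|\partial_{(\vp,w)}F(\ve_k,\tau_k,\vp_0,w_0)(\vp_k,w_k)\|_\infty\to0$. Write $T_k:=1+\ve_k\tau_k\to1$. Since $|\vp_k|\le1$, after passing to a subsequence we may assume $\vp_k\to\vp_*$ for some $\vp_*\in\R$. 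In the explicit expression for $\partial_{(\vp,w)}F(\ve_k,\tau_k,\vp_0,w_0)(\vp_k,w_k)$ displayed above, all summands carrying an explicit factor $\ve_k$ or $\ve_k^2$ tend to zero in $\|\cdot\|_\infty$, because $D(T_k,\beta^0)$ is uniformly bounded by \reff{besch}, $w_0$ is fixed, $\{w_k\}$ is bounded, and $\partial^2_uB$, $\partial^3_uB$ are bounded on the relevant set; moreover $\vp_k\bigl(C(T_k,\beta^0)S_{\vp_0}^{-1}g'+D(T_k,\beta^0)S_{\vp_0}^{-1}\partial_tf\bigr)\to\vp_*\bigl(C(1,\beta^0)S_{\vp_0}^{-1}g'+D(1,\beta^0)S_{\vp_0}^{-1}\partial_tf\bigr)$ by \reff{conti} and the continuity of $T\mapsto C(T,\beta^0)w$ for fixed $w$. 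Hence
\be
\label{apri1conv}
\bigl(I-C(T_k,\beta^0)R-D(T_k,\beta^0)\partial_uB(\beta^0,u^0)\bigr)w_k\longrightarrow-\vp_*\bigl(C(1,\beta^0)S_{\vp_0}^{-1}g'+D(1,\beta^0)S_{\vp_0}^{-1}\partial_tf\bigr)
\ee
in $C_{per}(\R\times[0,1];\R^2)$.

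Now I would reproduce the compactness argument from the proof of Lemma~\ref{apri} word for word: apply $(I-C(T_k,\beta^0)R)^{-1}$, which is uniformly bounded by Lemma~\ref{Invert}; multiply the resulting convergent sequence by $I+(I-C(T_k,\beta^0)R)^{-1}D(T_k,\beta^0)\partial_uB(\beta^0,u^0)$; use that $\bigl((I-C(T_k,\beta^0)R)^{-1}D(T_k,\beta^0)\partial_uB(\beta^0,u^0)\bigr)^2$ is compact (proof of Lemma~\ref{Fredholm}) together with a diagonal-subsequence extraction, and conclude that along a subsequence $w_k\to w_*$ for some $w_*\in V$ ($V$ being closed). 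Passing to the limit in \reff{apri1conv} gives
\be
\label{apri1lim}
\bigl(I-C(1,\beta^0)R-D(1,\beta^0)\partial_uB(\beta^0,u^0)\bigr)w_*=-\vp_*\bigl(C(1,\beta^0)S_{\vp_0}^{-1}g'+D(1,\beta^0)S_{\vp_0}^{-1}\partial_tf\bigr),
\ee
while $|\vp_*|+\|w_*\|_\infty=1$. Applying the functional $\phi$ to \reff{apri1lim} and using Lemma~\ref{lemker} $(ii)$, which says that the left-hand side lies in $\ker\phi$, we obtain $\vp_*\,\phi\bigl(C(1,\beta^0)S_{\vp_0}^{-1}g'+D(1,\beta^0)S_{\vp_0}^{-1}\partial_tf\bigr)=0$.

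To evaluate the coefficient I would differentiate the identity \reff{Phieq} with respect to $\vp$: the map $\vp\mapsto C(1,\beta^0)S_\vp^{-1}g+D(1,\beta^0)S_\vp^{-1}f$ is $C^1$ from $\R$ into $C_{per}(\R\times[0,1];\R^2)$, $\phi$ is a bounded linear functional, and $\partial_\vp S_\vp^{-1}g=-S_\vp^{-1}g'$, $\partial_\vp S_\vp^{-1}f=-S_\vp^{-1}\partial_tf$; hence $\phi\bigl(C(1,\beta^0)S_\vp^{-1}g'+D(1,\beta^0)S_\vp^{-1}\partial_tf\bigr)=\Phi'(\vp)$ for every $\vp$. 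Therefore $\vp_*\Phi'(\vp_0)=0$, and assumption \reff{beta} forces $\vp_*=0$. Then \reff{apri1lim} reduces to $\bigl(I-C(1,\beta^0)R-D(1,\beta^0)\partial_uB(\beta^0,u^0)\bigr)w_*=0$, so by \reff{kerT} we have $w_*\in\mbox{span}\{\partial_tu^0\}=T_{u^0}{\cal O}(u^0)$; since also $w_*\in V$, the direct sum decomposition \reff{oplus} yields $w_*=0$. This contradicts $|\vp_*|+\|w_*\|_\infty=1$, which completes the argument.

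The step I expect to be the real obstacle is again the extraction of a convergent subsequence of $\{w_k\}$: because $T\mapsto C(T,\beta^0)R$ and $T\mapsto D(T,\beta^0)$ fail to be norm-continuous, one cannot simply invert the limit operator, and one must run the Nikolskii/compactness machinery with $T$-dependent operators exactly as in the proof of Lemma~\ref{apri}. Everything else is a routine passage to the limit together with the bookkeeping with $\phi$.
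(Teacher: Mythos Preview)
Your proof is correct and follows essentially the same approach as the paper's own proof: argue by contradiction, pass to a subsequence with $\vp_k\to\vp_*$, use the diagonal-subsequence/compactness argument from Lemma~\ref{apri} to extract $w_k\to w_*\in V$, apply $\phi$ to the limiting identity and invoke \reff{Phieq} (differentiated in $\vp$) to force $\vp_*=0$, and then use \reff{kerT} and \reff{oplus} to conclude $w_*=0$. Your exposition is in fact slightly more explicit than the paper's at the point where $\Phi'(\vp_0)$ appears.
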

\begin{proof}
We proceed as in the proof of Lemma \ref{apri}.
Suppose the contrary. Then there exists a sequence $(\ve_k,\tau_k,\vp_k,w_k) \in (0,\infty) \times
\R^2\times V$, $k \in \N$, with
$|\vp_k|+\|w_k\|_\infty=1$ for all $k$,
but 
\be
\label{lim}
\|\vp_k\partial_\vp F(\ve_k,\tau_k,\vp_0,w_0)+
\partial_wF(\ve_k,\tau_k,\vp_0,w_0)w_k\|_\infty \to 0 \mbox{ for } k\to \infty.
\ee
Without loss of generality we may assume that there exists $\vp_*$ such that
$\vp_k \to \vp_*$ and, hence, 
$$
\vp_k\partial_\vp F(\ve_k,\tau_k,\vp_0,w_0)
\to
\vp_*\partial_\vp F(0,\tau_0,\vp_0,w_0)=
\vp_*\left(C(1,\beta^0)S_{\vp_0}^{-1}g'+
D(1,\beta^0)S_{\vp_0}^{-1}\partial_tf\right).
$$
Then \reff{lim} yields that
$\partial_wF(\ve_k,\tau_k,\vp_0,w_0)w_k$ 
converges in $C_{per}(\R\times[0,1];\R^2)$,
and \reff{besch} and \reff{wder} imply that
$$
(I-C(1+\ve_k \tau_k,\beta^0)R-D(1+\ve_k \tau_k,\beta^0)
\partial_u B(\beta^0,u^0))w_k
\mbox{ converges in } C_{per}(\R\times[0,1];\R^2).
$$ 
As in the proof of Lemma \ref{apri}, by using a "diagonal" subsequence, without loss of generality we may assume that there exists $w_* \in V$ such that $\|w_k-w_*\|_\infty \to 0$ for $k \to \infty$. Hence, \reff{wder}, \reff{vphder} and \reff{lim} imply that
\be
\label{cont1}
\vp_*\left(C(1,\beta^0)S_{\vp_0}^{-1}g'+
D(1,\beta^0)S_{\vp_0}^{-1}\partial_tf\right)
+(I-C(1,\beta^0)-D(1,\beta^0)
\partial_u B(\beta^0,u^0))w_*=0,
\ee
but
\be 
\label{cont2}
|\vp_*|+\|w_*\|_\infty=1.
\ee
We apply the functional $\phi$ (cf. \reff{phidef}) to \reff{cont1} and use \reff{Phieq}
in order to get $\vp_* \Phi'(\vp_0)=0$, i.e. $\vp_*=0$, i.e.
$
(I-C(1,\beta^0)-D(1,\beta^0)
\partial_u B(\beta^0,u^0))w_*=0.
$
But the operator $I-C(1,\beta^0)-D(1,\beta^0)
\partial_u B(\beta^0,u^0)$ is injective on $V$, hence $w_*=0$. This way we get  a contradiction to condition \reff{cont2}.
\end{proof}

Finally, let us verify condition \reff{ass3} of
Theorem \ref{app3}, i.e. the condition that for $\ve \approx 0$ and $\tau \approx \tau_0$ the operator
$\partial_{(\vp,w)}F(\ve,\tau,\vp_0,w_0)$
is Fredholm of index zero from $\R \times V$ into $C_{per}(\R \times[0,1];\R^2)$.
Because of Lemma \ref{apri1}, this is equivalent to the condition that for $\ve \approx 0$ and $\tau \approx \tau_0$ the operator
$\partial_{(\vp,w)}F(\ve,\tau,\vp_0,w_0)$
is an isomorphism from $\R \times V$ onto $C_{per}(\R \times[0,1];\R^2)$.
\begin{lemma}
\label{Fred1}
Suppose \reff{aass}--\reff{unull}, \reff{kersys},
\reff{keradsys}, \reff{beta}, and assume  that  one of the conditions \reff{nonressys}
and \reff{nonressys1} is satisfied.
Then there exists $\ve_0>0$ such that, for all $(\ve,\tau) \in [0,\infty)\times\R$ with
$
\ve+|\tau-\tau_0| \le \ve_0,
$
the operator
$\partial_{(\vp,w)}F(\ve,\tau,\vp_0,w_0)$
is an isomorphism from $\R \times V$ onto $C_{per}(\R \times[0,1];\R^2)$.
\end{lemma}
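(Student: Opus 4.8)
The plan is to derive the isomorphism property from Lemma~\ref{Fredholm} and Lemma~\ref{apri1} together with the splitting \reff{oplus}, \emph{without} appealing to any continuity of the family $(\ve,\tau)\mapsto\partial_{(\vp,w)}F(\ve,\tau,\vp_0,w_0)$ in the uniform operator norm (which does not hold). First I would use \reff{oplus} and \reff{kerT} to write $C_{per}(\R\times[0,1];\R^2)=\mbox{span}\{\de_tu^0\}\oplus V$, so that $\iota(\vp,w):=\vp\,\de_tu^0+w$ is a Banach space isomorphism from $\R\times V$ onto $C_{per}(\R\times[0,1];\R^2)$; denote by $\ell$ the associated $\mbox{span}\{\de_tu^0\}$-coordinate functional, so that $\iota^{-1}u=(\ell(u),u-\ell(u)\de_tu^0)$. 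It then suffices to show that $M(\ve,\tau):=\partial_{(\vp,w)}F(\ve,\tau,\vp_0,w_0)\circ\iota^{-1}$ is an isomorphism of $C_{per}(\R\times[0,1];\R^2)$ for $\ve+|\tau-\tau_0|$ small.

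Next, using the explicit formulas \reff{wder} and \reff{vphder} and writing $C_{\ve,\tau}:=C(1+\ve\tau,\beta^0)$, $D_{\ve,\tau}:=D(1+\ve\tau,\beta^0)$, $B^0:=\partial_uB(\beta^0,u^0)$, I would decompose
$$M(\ve,\tau)=\bigl(I-C_{\ve,\tau}R-D_{\ve,\tau}B^0\bigr)+R(\ve,\tau)-\ve E(\ve,\tau),$$
where $E(\ve,\tau)$ collects the two integral terms of \reff{wder} (acting on $u-\ell(u)\de_tu^0$), and $R(\ve,\tau)u:=\ell(u)\bigl[\partial_\vp F(\ve,\tau,\vp_0,w_0)-(I-C_{\ve,\tau}R-D_{\ve,\tau}B^0)\de_tu^0+\ve E(\ve,\tau)\de_tu^0\bigr]$ is a rank-one operator. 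By \reff{besch} and the $C^3$-smoothness of $b_j$, both $\sup\|R(\ve,\tau)\|$ and $\sup\|E(\ve,\tau)\|$ are finite over $(\ve,\tau)$ near $(0,\tau_0)$.

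The argument then proceeds in three short steps. (i) By Lemma~\ref{Fredholm}, $I-C_{\ve,\tau}R-D_{\ve,\tau}B^0$ is Fredholm of index zero for every $(\ve,\tau)$ with $|\ve\tau|<\ve_0$; since $R(\ve,\tau)$ is compact (rank one), so is $N(\ve,\tau):=\bigl(I-C_{\ve,\tau}R-D_{\ve,\tau}B^0\bigr)+R(\ve,\tau)$. (ii) By Lemma~\ref{apri1}, transported through the isomorphism $\iota$, there is $c>0$ with $\|M(\ve,\tau)u\|_\infty\ge c\|u\|_\infty$, hence $\|N(\ve,\tau)u\|_\infty\ge(c-\ve\sup\|E(\ve,\tau)\|)\|u\|_\infty\ge(c/2)\|u\|_\infty$ once $\ve$ is small; thus $N(\ve,\tau)$ is an injective Fredholm operator of index zero, i.e.\ a bijection, with $\|N(\ve,\tau)^{-1}\|\le 2/c$ uniformly in $(\ve,\tau)$. (iii) Finally $M(\ve,\tau)=N(\ve,\tau)\bigl(I-\ve N(\ve,\tau)^{-1}E(\ve,\tau)\bigr)$, and for $\ve$ so small that $\ve\,(2/c)\sup\|E(\ve,\tau)\|<1$ the second factor is invertible by a Neumann series; hence $M(\ve,\tau)$, and with it $\partial_{(\vp,w)}F(\ve,\tau,\vp_0,w_0)$, is an isomorphism. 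Note that the transversality condition $\Phi'(\vp_0)\ne0$ from \reff{beta} enters here only implicitly, through Lemma~\ref{apri1}.

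The main obstacle is, once again, that $T\mapsto C(T,\beta^0)R$ and $T\mapsto D(T,\beta^0)$ are only strongly --- not norm --- continuous, so one cannot obtain invertibility of $\partial_{(\vp,w)}F(\ve,\tau,\vp_0,w_0)$ for $(\ve,\tau)$ near $(0,\tau_0)$ by a naive perturbation of the (known) invertibility at $(0,\tau_0)$. The organization above circumvents this because its two genuinely nontrivial ingredients --- Fredholmness of index zero (Lemma~\ref{Fredholm}, valid \emph{for each individual} $(\ve,\tau)$) and the lower bound (Lemma~\ref{apri1}, valid \emph{uniformly}) --- do not require operator-norm continuity, while the residual perturbation $\ve E(\ve,\tau)$ is merely uniformly bounded (not compact, not norm-continuous), which is still enough to absorb it into a Neumann series of uniform radius. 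Verifying the rank-one structure of $R(\ve,\tau)$ and the uniform bound on $E(\ve,\tau)$ is a routine computation from \reff{wder}, \reff{vphder} and \reff{besch}.
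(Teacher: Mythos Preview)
Your proof is correct and follows essentially the same route as the paper's: split $\partial_{(\vp,w)}F(\ve,\tau,\vp_0,w_0)$ into a main part that is Fredholm of index zero for each fixed $(\ve,\tau)$ (via Lemma~\ref{Fredholm} plus a finite-rank correction), use the uniform lower bound of Lemma~\ref{apri1} to upgrade Fredholmness to bijectivity with uniformly bounded inverse, and absorb the $O(\ve)$ remainder by a Neumann series. The only cosmetic difference is that you conjugate by $\iota$ first and isolate the $\partial_\vp F$-contribution as an explicit rank-one operator $R(\ve,\tau)$, whereas the paper keeps this term inside its main part $I_{\ve,\tau}$ and verifies Fredholmness of $I_{\ve,\tau}$ by an equivalent finite-rank argument through the projection $Q$.
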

\begin{proof}
We have
$
\partial_{(\vp,w)}F(\ve,\tau,\vp_0,w_0)(\vp,w)=
I_{\ve,\tau}(\vp,w)+\ve J_{\ve,\tau}w
$
with linear operators $I_{\ve,\tau}$ and $J_{\ve,\tau}$ defined by
\begin{eqnarray*}
I_{\ve,\tau}(\vp,w)&:=&
\vp\left(C(1+\ve \tau,\beta^0)S_{\vp_0}^{-1}g'+
D(1+\ve \tau,\beta^0)S_{\vp_0}^{-1}\partial_tf\right)\\
&&+\left(I-C(1+\ve \tau,\beta^0)R-D(1+\ve \tau,\beta^0)
\partial_uB(\beta^0,u^0)\right)w
\end{eqnarray*}
and
\begin{eqnarray*}
J_{\ve,\tau}w&:=&
-2D(1+\ve \tau,\beta^0)\int_0^1(1-s)
\partial^2_uB(\beta^0,u^0+s\ve w_0)(w_0,w)\,ds\\
&&-\ve D(1+\ve \tau,\beta^0)\int_0^1(1-s)s
\partial^3_uB(\beta^0,u^0+s\ve w_0)(w_0,w_0,w)\,ds.
\end{eqnarray*}
Further, for all $\ve \in [0,1]$, $\tau \in (-1,1)$ and $w \in V$ we have  $\|J_{\ve,\tau}w\|_\infty \le \,$const\,$\|w\|_\infty$, where the constant does not depend on $\ve$, $\tau$ and $w$. Hence, Lemma \ref{apri1} yields that
\be 
\label{apri2}
\|I_{\ve,\tau}(\vp,w)\|_\infty \ge \,\mbox{const}\,(|\vp|+\|w\|_\infty)
\ee
for  $\ve \approx 0$, $\tau \approx \tau_0$, $\vp \in \R$ and $w \in V$,  where the constant is positive and does not depend on $\ve$, $\tau$, $\vp$ and $w$. Below we will show that
\be 
\label{Fred2}
\left.
\begin{array}{l}
\mbox{for all $\ve \approx 0$ and $\tau \approx \tau_0$ the operator $I_{\ve,\tau}$
is Fredholm}\\ \mbox{of index zero 
from $\R \times V$ onto $C_{per}(\R \times[0,1];\R^2)$}.
\end{array}
\right\}
\ee
Then \reff{apri2} yields that for $\ve \approx 0$ and $\tau \approx \tau_0$ the operator $I_{\ve,\tau}$
is an isomorphism 
from $\R \times V$ onto $C_{per}(\R \times[0,1];\R^2)$, and $\|I_{\ve,\tau}^{-1}u\|_\infty \le \mbox{const}\,\|u\|_\infty$,
where the constant does not depend on $\ve$,
$\tau$ and $u$. Hence, the operator
$\partial_{(\vp,w)}F(\ve,\tau,\vp_0,w_0)
=I_{\ve,\tau}+\ve J_{\ve,\tau}$
is an isomorphism from $\R \times V$ onto $C_{per}(\R \times[0,1];\R^2)$ if the operator
$I+\ve I_{\ve,\tau}^{-1}J_{\ve,\tau}$
is an isomorphism from $\R \times V$ onto itself, and this is the case for small $\ve$.

It remains to prove \reff{Fred2}. We proceed as in the proof of Lemma \ref{Fredholm}.
Because of $\ve$ and  $\tau$ are fixed, in the notations below we will not mention the dependence on $\ve$ and $\tau$, i.e. 
$
C:=C(1+\ve\tau;\beta^0),\; D:=D(1+\ve\tau,\beta^0),\; B^0:=\partial_u B(\beta^0,u^0).
$
Then
$$
I_{\ve,\tau}(\vp,w)=
\vp(CS_{\vp_0}^{-1}g'+
DS_{\vp_0}^{-1}\partial_tf)
+(I-CR-DB^0)w.
$$

The projection corresponding to the algebraic sum \reff{oplus} is
$
Q\in {\cal L}(C_{per}(\R \times[0,1];\R^2))$ with 
$$
Qu:=\sum_{j=1}^2\int_0^1\int_0^1u_ju_j^*\,dtdx\; \partial_tu^0,\;\ker Q=V,\;
\im Q=\mbox{span}\{\partial_tu^0\}.
$$
The map $u \mapsto
\left(\sum_{j=1}^2\int_0^1\int_0^1u_ju_j^*\,dtdx,
(I-Q)u\right)$ is an isomorphism from $C_{per}(\R \times[0,1];\R^2)$ onto $\R \times V$.
Hence, to prove \reff{Fred2}, it suffices to show that the linear map
$$
u \mapsto \sum_{j=1}^2\int_0^1\int_0^1u_ju_j^*\,dtdx\;
(CS_{\vp_0}^{-1}g'+
DS_{\vp_0}^{-1}\partial_tf)
+(I-CR-DB^0)(I-Q)u
$$
is Fredholm of index zero from  $C_{per}(\R \times[0,1];\R^2)$ into itself.
Because of Lemma \ref{Invert},  
it suffices to prove that the linear map
$$
u \mapsto \sum_{j=1}^2\int_0^1\int_0^1u_ju_j^*\,dtdx\;
(I-CR)^{-1}(CS_{\vp_0}^{-1}g'+
DS_{\vp_0}^{-1}\partial_tf)
+(I-(I-CR)^{-1}DB^0)(I-Q)u
$$
is Fredholm of index zero from  $C_{per}(\R \times[0,1];\R^2)$ into itself.
But the operators $Q$ and $u \mapsto
\sum_{j=1}^2\int_0^1\int_0^1u_ju_j^*\,dtdx\;
(I-CR)^{-1}(CS_{\vp_0}^{-1}g'+
DS_{\vp_0}^{-1}\partial_tf)$
have finite dimensional images.
Finally, due to  Lemma~\ref{Invert},
  the linear map
$
u \mapsto (I-(I-CR)^{-1}DB^0)u
$
is Fredholm of index zero from  $C_{per}(\R \times[0,1];\R^2)$ into itself,
what finishes the proof.
\end{proof}

Now we  can solve
\reff{abstract2} for $\ve\approx 0$, $\tau\approx \tau_0$, $\vp\approx\vp_0$, $w\approx w_0$ by means of Theorem \ref{app3}.
We get the following result:

Suppose \reff{aass}--\reff{unull}, 
\reff{kersys}, \reff{keradsys}, \reff{beta}, and assume  that  one of the conditions \reff{nonressys} and
\reff{nonressys1} is satisfied.
Then the following is true:

There exist $\ve_0 >0$ and $\delta>0$ such that for all   $(\ve,\tau) \in (0,\ve_0)
\times (\tau_0-\ve_0,\tau_0+\ve_0)$ 
equation \reff{abstract2} has
a unique solution $(\vp,w)=(\vp_{\ve,\tau},w_{\ve,\tau})$ 
  with  
$|\vp-\vp_0|+\|w-w_0\|_\infty<\delta$. Moreover,
the map  $(\ve,\tau)  \mapsto (\vp_{\ve,\tau}, w_{\ve,\tau})$ 
is continuous from  $(0,\ve_0)
\times (\tau_0-\ve_0,\tau_0+\ve_0)$
into $\R \times V$.

Let us translate this result into the language of Theorem \ref{sys1} via
$T=1+\ve \tau$ and $u_{\ve,T}=
S_{\vp_{\ve,\tau}}(u^0+\ve w_{\ve,\tau})$.
Then we have $(\ve,\tau) \in (0,\ve_0)
\times (\tau_0-\ve_0,\tau_0+\ve_0)$ if and only if
$(\ve,T) \in K(\ve_0,\tau_0)$. 
Hence, assertions $(i)$ and $(ii)$ of Theorem \ref{sys1} are satisfied if assertion $(ii)$ of Lemma \ref{equiv} works, i.e. if 
$\partial_tu_{\ve,T}$ exists and is continuous, i.e.  if 
$\partial_tw_{\ve,\tau}$ exists and is continuous.

\begin{lemma}
\label{wreg}
Suppose \reff{aass}--\reff{unull}, \reff{kersys},
\reff{keradsys}, and assume  that  one of the conditions \reff{nonressys}
and \reff{nonressys1} is satisfied.
Then, if $\ve_0$ is sufficiently small, $\partial_tw_{\ve,\tau}$ exists and is continuous for all $\ve>0$ and $\tau \in \R$ with $\ve+|\tau-\tau_0|<\ve_0$. 
\end{lemma}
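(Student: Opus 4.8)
The plan is to deduce the time-regularity of $w_{\ve,\tau}$ from that of the corresponding solution $u_*:=u_{\ve,T}=S_{\vp_{\ve,\tau}}(u^0+\ve w_{\ve,\tau})$ of \reff{sys}, and to obtain the latter just as the additional regularity of $u^0$ was obtained in Subsection~\ref{Regularity}, but now around the \emph{forced} solution and with the forcing absorbed into the group action. Fix $\ve>0$ and $\tau$ with $\ve+|\tau-\tau_0|<\ve_0$ and put $T=1+\ve\tau$. Since $u_*\in C_{per}(\R\times[0,1];\R^2)$ solves \reff{abstract} with $\beta=S_{\vp_{\ve,\tau}}\beta^0$ and $\beta$ there is artificial, $u_*$ also solves it with $\beta=\beta(u_*)$ (notation \reff{betaudef}), i.e.\ $\F_{\ve,\tau}(u_*)=0$ for
\[
\F_{\ve,\tau}(u):=u-C(T,\beta(u))(Ru+\ve g)-D(T,\beta(u))(B(\beta(u),u)+\ve f).
\]
Introduce $\tilde\F(\psi,u):=u-C(T,\beta(u))(Ru+\ve S_\psi g)-D(T,\beta(u))(B(\beta(u),u)+\ve S_\psi f)$, with $S_\psi$ also denoting the shift on $C_{per}(\R;\R^2)$. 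By \reff{invar}, $\tilde\F(\psi+\psi',S_{\psi'}u)=S_{\psi'}\tilde\F(\psi,u)$, hence $\tilde\F(\psi,S_\psi u_*)=S_\psi\F_{\ve,\tau}(u_*)=0$; since $f,g\in C^1$ the map $\tilde\F$ is $C^1$ jointly in $(\psi,u)$ (decisively, $T$ is kept \emph{fixed}, so the discontinuity-in-$T$ phenomena of Remark~\ref{advant} do not occur); and by \reff{artif} the $\beta$-derivative terms cancel at a zero, so $\partial_u\tilde\F(0,u_*)=\F_{\ve,\tau}'(u_*)=I-C(T,\beta(u_*))R-D(T,\beta(u_*))\partial_uB(\beta(u_*),u_*)$.

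The heart of the matter is to show, after shrinking $\ve_0$, that $\F_{\ve,\tau}'(u_*)$ is an isomorphism of $C_{per}(\R\times[0,1];\R^2)$. \emph{Fredholmness of index zero} follows as in Lemma~\ref{Fredholm}: with $\beta=\beta(u_*)$ the operator $\partial_uB(\beta(u_*),u_*)$ is anti-diagonal (the diagonal contributions cancel, cf.\ Remark~\ref{changerem}), so the change of integration variables used there --- which depends only on the $a_j$ --- turns $\bigl((I-C(T,\beta(u_*))R)^{-1}D(T,\beta(u_*))\partial_uB(\beta(u_*),u_*)\bigr)^2$ into an integral operator with \emph{continuous} kernel (mere continuity of $u_*$, not $C^1$-smoothness, suffices), hence compact by Arzela-Ascoli; invertibility of $I-C(T,\beta(u_*))R$ comes from Lemma~\ref{Invert} together with the uniform-in-$T\approx1$ continuity of $\beta\mapsto C(T,\beta)R$, since $\beta(u_*)$ is close to $S_{\vp_0}\beta^0$. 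The \emph{injectivity} of $\F_{\ve,\tau}'(u_*)$ --- i.e.\ the non-degeneracy (``hyperbolicity'') of the locked solution $u_{\ve,T}$ --- is where $\Phi'(\vp_0)\not=0$ has to enter, and this is the main obstacle. I would treat it by a Lyapunov--Schmidt reduction near $\ve=0$: at $\ve=0$ the Fredholm-of-index-zero operator $\F_{\ve,\tau}'(u_*)$ has the one-dimensional kernel $\mathrm{span}\{\partial_tu^0\}$ (Lemma~\ref{lemker}(i)), and the reduced scalar --- the pairing against $u^*$ of the image of $\partial_tu^0$ --- vanishes at $\ve=0$ but, by a Melnikov-type computation entirely parallel to the derivation of the phase equation in Subsection~\ref{Proofsys1} (using \reff{AC}, \reff{artif}, the adjoint problem \reff{adsys}, \reff{keradsys} and the normalisation of $u^*$), has $\ve$-derivative equal, up to a nonzero factor, to $\Phi'(\vp_0)$; hence it is nonzero for $0<\ve\ll1$, $\tau\approx\tau_0$, and $\F_{\ve,\tau}'(u_*)$ is then bijective.

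Granting that $\F_{\ve,\tau}'(u_*)$ is an isomorphism, the ordinary implicit function theorem applied to $\tilde\F$ at $(0,u_*)$ provides $\delta>0$ and a $C^1$ curve $\psi\in(-\delta,\delta)\mapsto u(\psi)\in C_{per}(\R\times[0,1];\R^2)$ with $u(0)=u_*$, $\tilde\F(\psi,u(\psi))=0$ and $u(\psi)$ the unique solution near $u_*$; since $S_\psi u_*$ also solves $\tilde\F(\psi,\cdot)=0$ and tends to $u_*$ as $\psi\to0$, uniqueness gives $u(\psi)=S_\psi u_*$ for small $\psi$, and then by the group law $\psi\mapsto S_\psi u_*$ is $C^1$ on all of $\R$. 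Consequently $\partial_tu_*=\frac{d}{d\psi}\big|_{\psi=0}S_\psi u_*$ exists and is continuous, and since $\ve w_{\ve,\tau}=S_{\vp_{\ve,\tau}}^{-1}u_*-u^0$ we obtain $\ve\,\partial_tw_{\ve,\tau}=S_{\vp_{\ve,\tau}}^{-1}\partial_tu_*-\partial_tu^0$, which exists and is continuous because $\partial_tu^0$ is (cf.\ Subsection~\ref{Regularity}). The whole difficulty thus sits in the injectivity of $\F_{\ve,\tau}'(u_*)$: the relevant operators do not depend norm-continuously on $T$, so no naive perturbation from the unforced operator $I-C(1,\beta^0)R-D(1,\beta^0)\partial_uB(\beta^0,u^0)$ is available, and one genuinely has to show that its one-dimensional kernel at $\ve=0$ is destroyed, at order $\ve$, precisely by the mechanism measured by $\Phi'(\vp_0)$.
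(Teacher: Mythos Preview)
Your approach differs from the paper's and, while the overall scheme is sound, it leaves the hardest step incomplete and makes the proof harder than necessary.

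The paper does not try to show that $\F_{\ve,\tau}'(u_*)$ is an isomorphism on $C_{per}(\R\times[0,1];\R^2)$. Instead it stays in the $(\vp,w)$-coordinates and applies the ordinary implicit function theorem to
\[
\mathcal{F}(\ve,\tau,\psi,\vp,w):=S_\psi F(\ve,\tau,\vp-\psi,S_\psi^{-1}w),
\]
with $F$ from \reff{abstract2}. By \reff{invar} this unfolds into an expression involving only $S_\psi\beta^0$, $S_\psi u^0$, $S_{\psi-\vp}f$, $S_{\psi-\vp}g$, all of which are $C^1$ in $\psi$, so $\mathcal{F}$ is $C^1$ in $(\psi,\vp,w)$. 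The linearisation needed is $\partial_{(\vp,w)}\mathcal{F}(\ve,\tau,0,\vp_{\ve,\tau},w_{\ve,\tau})=\partial_{(\vp,w)}F(\ve,\tau,\vp_{\ve,\tau},w_{\ve,\tau})$, and the isomorphism property of $\partial_{(\vp,w)}F(\ve,\tau,\vp_0,w_0):\R\times V\to C_{per}$ was \emph{already established} in Lemmas~\ref{apri1} and~\ref{Fred1} --- it is precisely the hypothesis under which Theorem~\ref{app3} produced $(\vp_{\ve,\tau},w_{\ve,\tau})$ in the first place. Passing from $(\vp_0,w_0)$ to $(\vp_{\ve,\tau},w_{\ve,\tau})$ is then a routine Neumann-series perturbation, because $\partial_{(\vp,w)}F(\ve,\tau,\vp,w)$ depends continuously on $(\vp,w)$ in the operator norm (uniformly for $\ve\approx 0$, $\tau\approx\tau_0$), even though it does not depend norm-continuously on $(\ve,\tau)$. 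The implicit function theorem then gives $C^1$-smoothness of $\psi\mapsto(\vp_{\ve,\tau}-\psi,S_\psi w_{\ve,\tau})$, hence $\partial_t w_{\ve,\tau}\in C_{per}$.

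Your route, by working with $u$ alone, throws away the $\vp$-direction and therefore has to show that the one-dimensional kernel of $\F_{\ve,\tau}'(u_*)$ at $\ve=0$ is destroyed for $\ve>0$. Your Melnikov sketch asks for the $\ve$-derivative of the reduced scalar, but this requires $u_*$ (hence $w_{\ve,\tau}$) to be differentiable in $\ve$, which Theorem~\ref{app3} does not provide, and the non-norm-continuous $T$-dependence of $C,D$ compounds the problem. One could presumably repair this by a contradiction/compactness argument in the spirit of Lemma~\ref{apri}, but you have not carried it out, and it would be considerably longer than the paper's argument. The insight you are missing is that keeping the $\vp$-direction in the state space makes the linearisation invertible \emph{already at} $\ve=0$ --- $\Phi'(\vp_0)\neq 0$ enters through $\partial_\vp F$ --- so no kernel-destruction argument is needed at all.
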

\begin{proof}
We know that $\vp=\vp_{\ve,\tau}-\psi$, $w=S_\psi w_{\ve,\tau}$ is a solution to equation 
$S_\psi F(\ve,\tau,\vp-\psi,S_\psi^{-1}w)=0$ (cf. \reff{abstract2}).
But \reff{invar} and the definition \reff{abstract2} of  $F$  yields
that
\begin{eqnarray*}
&&\F(\ve,\tau,\psi,\vp,w):=S_\psi F(\ve,\tau,\vp-\psi,S_\psi^{-1}w)\\
&&=\left(I-C(1+\ve \tau,S_\psi \beta^0)R-D(1+\ve \tau,S_\psi \beta^0)
\partial_uB(S_\psi \beta^0,S_\psi u^0)\right)w\nonumber\\
&&-\ve D(1+\ve \tau,S_\psi \beta^0)\int_0^1(1-s)
\partial^2_uB(S_\psi \beta^0,S_\psi u^0+s\ve w)(w,w)\,ds\nonumber\\
&&-\tau\int_0^1\left(\partial_TC(1+s\ve \tau,S_\psi \beta^0)Ru^0+\partial_TD(1+s\ve \tau,S_\psi \beta^0)B(S_\psi \beta^0,S_\psi u^0)\right)ds\nonumber\\
&&-C(1+\ve \tau,S_\psi \beta^0)S_{\psi-\vp}g
-D(1+\ve \tau,S_\psi \beta^0)S_{\psi-\vp}f.
\end{eqnarray*}
Now, the maps $\psi \mapsto S_\psi \beta^0$,
$\psi \mapsto S_\psi u^0$ and $\psi \mapsto S_\psi f$  are $C^1$-smooth from $\R$ into $C_{per}(\R \times [0,1];\R^2)$, and similarly for $\psi \mapsto S_\psi g$. Hence, 
the map $(\psi,\vp,w) \mapsto
\F(\ve,\tau,\psi,\vp,w)$ is $C^1$-smooth from $\R^2 \times V$ into $C_{per}(\R \times [0,1];\R^2)$. 

Let us consider the equation
$\F(\ve,\tau,\psi,\vp,w)=0$.
We use the implicit function theorem not to solve it with respect to $(\vp,w)\approx (\vp_{\ve,\tau},w_{\ve,\tau})$ in terms of $\psi\approx 0$ (because the solution is known, it is $\vp=\vp_{\ve,\tau}-\psi$, $w=S_\psi w_{\ve,\tau}$), but to get the $C^1$-smoothness of the data-to-solution map
$\psi \mapsto (\vp_{\ve,\tau}-\psi,S_\psi w_{\ve,\tau})$.
This  $C^1$-smoothness of the data-to-solution map yields that $\partial_tw_{\ve,\tau}$ exists and is an element of the space
$C_{per}(\R \times [0,1];\R^2)$. 

It remains to show that the implicit function theorem is applicable, i.e. that
the linear operator 
$
\partial_{(\vp,w)}\F(\ve,\tau,0,\vp_{\ve,\tau},w_{\ve,\tau})
=\partial_{(\vp,w)}F(\ve,\tau,\vp_{\ve,\tau},w_{\ve,\tau})
$
is an isomorphism from $\R \times V$ onto
$C_{per}(\R \times [0,1]:\R^2)$ for
$\ve \approx 0$ and $\tau \approx \tau_0$.
But the Lemmas \ref{apri1} and \ref{Fred1}
yield that for $\ve \approx 0$ and $\tau \approx \tau_0$ the linear operator $\partial_{(\vp,w)}F(\ve,\tau,\vp_{0},w_{0})$
is an isomorphism from $\R \times V$ onto
$C_{per}(\R \times [0,1]:\R^2)$, and the operator norm of $\partial_{(\vp,w)}F(\ve,\tau,\vp_{0},w_{0})^{-1}$ is bounded uniformly with respect to $\ve$ and $\tau$.

Let us summarize: We know that $\partial_{(\vp,w)}F(\ve,\tau,\vp_{\ve,\tau},w_{\ve,\tau})$
is an isomorphism from $\R \times V$ onto
$C_{per}(\R \times [0,1]:\R^2)$ if 
\begin{eqnarray}
\label{isom}
&&\partial_{(\vp,w)}F(\ve,\tau,\vp_{0},w_{0})^{-1}\partial_{(\vp,w)}F(\ve,\tau,\vp_{\ve,\tau},w_{\ve,\tau})\nonumber\\
&&=
I+\partial_{(\vp,w)}F(\ve,\tau,\vp_{0},w_{0})^{-1}
\left(\partial_{(\vp,w)}F(\ve,\tau,\vp_{\ve,\tau},w_{\ve,\tau})
-\partial_{(\vp,w)}F(\ve,\tau,\vp_{0},w_{0})\right)
\end{eqnarray}
is an isomorphism from $\R \times V$ onto
$C_{per}(\R \times [0,1]:\R^2)$.
But for $\ve \approx 0$ and $\tau \approx \tau_0$ we know that
$|\vp_{\ve,\tau}-\ve_0|\approx 0$ and $\|w_{\ve,\tau}-w_0\|_\infty\approx 0$. Moreover, the operators $\partial_{(\vp,w)}F(\ve,\tau,\vp,w)$ depend continuously (with respect to the operator norm) on $\vp$ and $w$, uniformly with respect to 
$\ve \approx 0$ and $\tau \approx \tau_0$.
Hence,
$$
\sup_{|\vp|+\|w\|_\infty=1}\left\|
\partial_{(\vp,w)}F(\ve,\tau,\vp_{0},w_{0})^{-1}
\left(\partial_{(\vp,w)}F(\ve,\tau,\vp_{\ve,\tau},w_{\ve,\tau})
-\partial_{(\vp,w)}F(\ve,\tau,\vp_{0},w_{0})\right)(\vp,w)\right\|_\infty\\
$$
is smaller than one
for $\ve \approx 0$ and $\tau \approx \tau_0$, i.e. for those $\ve$ and $\tau$ the operator \reff{isom} is an isomorphism from $\R \times V$ onto
$C_{per}(\R \times [0,1]:\R^2)$.
\end{proof}

Assertion $(iii)$ of Theorem \ref{sys1} follows from
$$
\frac{1}{\ve}\inf_{\vp \in \R}\|u_{\ve,T}-S_\vp u^0\|_\infty=
\frac{1}{\ve}\inf_{\vp \in \R}\|
(S_{\vp_{\ve,\tau}}-S_\vp)(u^0+\ve w_{\ve,\tau}\|_\infty\le \|w_{\ve,\tau}\|_\infty.
$$
Assertion $(iv)$ of Theorem \ref{sys1} (with $\vp_\tau$ from there defined by $\vp_{0,\tau}$ from here) follows from \reff{epsnulleq} and \reff{Phieq} and
from $u_{\ve,1+\ve\tau}-S_{\vp_{0,\tau}}u^0=
(S_{\ve_{\ve,\tau}}-S_{\ve_{0,\tau}})u^0+
\ve w_{\ve,\tau}\to 0$ for  $\ve \to 0$.

\subsection{Proof of Theorem \ref{sys2}}
\label{Proofsys2}
Suppose \reff{aass}--\reff{unull}, 
\reff{kersys} and \reff{keradsys}, and assume that at least one of the conditions \reff{nonressys} and
\reff{nonressys1} is satisfied.
Let
$(\ve_k,T_k,u_k)$, $k\in\N$, be a sequence
of solutions to \reff{sys} with \reff{seqlim}.
Because of Lemma \ref{equiv} this sequence is also a sequence of solutions to \reff{abstract}. Further, Lemma \ref{apri} yields that
$T_k=1+\ve_k\tau_k$ and  $u_k=S_{\vp_k}(u^0+\ve_k w_k)$
where $\tau_k \in \R$ and $w_k \in V$ are  bounded sequences. We, 
therefore, have \reff{abstract2}, i.e.
$
F(\ve_k,\tau_k,\vp_k,w_k)=0.
$
In particular, \reff{ortho}, \reff{Phieq} and the definition \reff{abstract2}  of the map $F$ yield
\begin{eqnarray*}
0&=&\phi(F(\ve_k,\tau_k,\vp_k,w_k))\\
&=&-\tau_k+\Phi(\vp_k)+\phi((C(1,\beta^0)
-C(1+\ve_k \tau_k,\beta^0))Rw_k)\\
&&+\phi((D(1,\beta^0)
-D(1+\ve_k \tau_k,\beta^0))
\partial_uB(\beta^0,u^0))w_k)\\
&&-\ve_k\phi\left( D(1+\ve_k \tau_k,\beta^0)\int_0^1(1-s)
\partial^2_uB(\beta^0,u^0+s\ve_k w_k)(w_k,w_k)\,ds\right)\\
&&+\tau_k \phi(
(\partial_TC(1,\beta^0)-
\partial_TC(1+s\ve_k \tau_k,\beta^0))Ru^0)\\
&&+\tau_k \phi((\partial_TD(1,\beta^0)
-\partial_TD(1+s\ve_k \tau_k,\beta^0))B(\beta^0,u^0))\\
&&+ \phi((C(1,\beta^0)-C(1+\ve_k \tau_k,\beta^0))S_{\vp_k}^{-1}g+
(D(1,\beta^0)
-D(1+\ve_k \tau_k,\beta^0))S_{\vp_k}^{-1}f).
\end{eqnarray*}
Hence, \reff{besch} and \reff{conti} imply that
$\Phi(\vp_k)-\tau_k \to 0$ for $k \to \infty$.

\section{Proofs for second-order equations}
\label{proofeq}
\setcounter{equation}{0}
\setcounter{theorem}{0}
In this section we will prove  Theorems \ref{eq1} and \ref{eq2}. We  suppose that all assumptions of these theorems are satisfied.

\subsection{Formal calculations}
\label{formal}
In this subsection we show how one can find, by simple formal calculations, the
formula \reff{Phidefeq}
for the phase equation.
Similar calculations could also be done in Section \ref{proofssys} in order to get the formula \reff{Phidef}
in advance,
 without the long rigorous calculations.

We insert $T=1+\ve \tau$  and $u=S_\vp(u^0+
\ve v)$ into \reff{eq}, then apply $S_{\vp}^{-1}$ to the differential equation and the boundary conditions, and afterwards divide by $\ve$
and  tend $\ve$ to zero.
Using Taylor expansions like  
$1/(1+\ve \tau)=-\ve \tau +O(\ve)$ and
$1/(1+\ve \tau)^2=-2\ve \tau +O(\ve)$ and also the differential equation and the boundary conditions satisfied by $u^0$, i.e.
$$
\partial_t^2u^0-a(x)^2\partial_x^2u^0+b(x,u^0,\partial_tu^0,\partial_xu^0)=0,\;
u^0(t,0)=\partial_xu^0(t,1)+\gamma u(t,1)=0,
$$
one gets
$$
\begin{array}{l}
\partial_t^2v-2\tau \de_t^2u^0-a(x)^2\partial_x^2v
+b_2(t,x)v
+b_3(t,x)
(\de_tv-\tau\de_tu^0)+
b_4(t,x)
\de_xv=f(t-\vp,x),\\
v(t,0)=g_1(t-\vp),\;
\partial_xv(t,1)+\gamma v(t,1)=g_2(t-\vp).
\end{array}
$$
After  multiplying the differential equation by $u^*$, integrating over $t$ and $x$, and using the boundary conditions for $u^*$ and $v$, one ends up with
\begin{eqnarray*}
&&\int_0^1\int_0^1\left(
f(t-\vp,x)u^*(t,x)+\tau(2\de_t^2u^0(t,x)+
b_3(t,x)\de_tu^0(t,x))\right)
 u^*(t,x)\,dtdx\\
&&=\int_0^1\left[-a^2\de_xvu^*+
v((a(2a'+\gamma a)+b_4)u^*
+a^2\de_xu^*) 
\right]_{x=0}^{x=1}
dt\\
&&=-\int_0^1\left(a(1)^2g_2(t-\vp)u^*(t,1)+
a(0)^2g_1(t-\vp)\de_xu^*(t,0)\right)
dt.
\end{eqnarray*}\
Because of \reff{keradeq} and \reff{Phidefeq},
this is just the phase equation $\Phi(\vp)=\tau$.

\subsection{Transformation of the second-order equation into a first-order system}
\label{Transfo}
In this subsection we show that any solution $u$ to  problem \reff{eq} for a second-order equation creates a solution 
\be 
\label{trafo}
\displaystyle v_1(t,x):=\frac{1}{T}\de_tu(t,x)+a(x)\de_xu(t,x),\quad
\displaystyle v_2(t,x):=\frac{1}{T}\de_tu(t,x)-a(x)\de_xu(t,x)
\ee
to the following problem for a first-order  system of integro-differential
equations:
\be
\label{FOS}
\left.
  \begin{array}{l}
 \displaystyle   \frac{1}{T}\de_tv_1(t,x)-a(x)\de_xv_1(t,x)+[\B(\ve,v)](t,x)=\ve f(t,x),\\
 \displaystyle    \frac{1}{T}\de_tv_2(t,x)+a(x)\de_xv_2(t,x)+[\B(\ve,v)](t,x)=\ve f(t,x),\\
 \displaystyle    v_1(t,0)+v_2(t,0)= \frac{2\ve}{T} g_1'(t),\\
  \displaystyle   v_1(t,1)-v_2(t,1)+\gamma a(1)\int_0^1\frac{v_1(t,y)-v_2(t,y)}{a(y)}\,dy
    =2\ve a(1)g_2(t),\\
    v(t+1,x)=v(t,x).
  \end{array}
\right\}
\ee
And vice versa. Here the nonlinear operator $\B$ is defined by
\begin{eqnarray}
\label{RBdef}
 && [\B(\ve,v)](t,x)\nonumber\\
 &&:=b\Bigl(x,\ve g_1(t)+[J v](t,x), [Kv](t,x),[L v](t,x)\Bigr)+\frac{a'(x)}{2}(v_1(t,x)-v_2(t,x))
\end{eqnarray}
with a partial integral operator $J$ defined by
$$
[Jv](t,x):=\frac{1}{2}\int_0^x\frac{v_1(t,y)-v_2(t,y)}{a(y)}\,dy
$$
and with "pointwise" operators $K$ and $L$ defined by 
$$
[Kv](t,x):=\frac{v_1(t,x)+v_2(t,x)}{2}, \; [Lv](t,x):=\frac{v_1(t,x)-v_2(t,x)}{2a(x)}.
$$

As in Section \ref{proofssys}, we denote by $C^1_{per}(\R\times [0,1];\R^2)$ the space of all $C^1$-smooth functions $v:\R \times  [0,1] \to \R^2$ such that
$v(t+1,x)=v(t,x)$ for all $t \in \R$ and $x \in [0,1]$.
Similarly, by $C^2_{per}(\R\times [0,1];\R)$  we denote the space of all $C^2$-smooth functions $u:\R \times  [0,1] \to \R^2$ such that
$u(t+1,x)=u(t,x)$ for all $t \in \R$ and $x \in [0,1]$.

\begin{lemma}\label{lem:FOS}
For all $\ve>0$ and $T>0$  the following is true:

(i) If $u \in C_{per}^2(\R\times [0,1];\R)$ is a solution to \reff{eq}, then the function $v \in C^1_{per}(\R\times [0,1];\R^2)$,
which is defined by \reff{trafo},
is a solution to \reff{FOS}.

(ii) Let $v \in C_{per}^{1}(\R\times [0,1];\R^2)$ be a solution to \reff{FOS}. Then the function  $u$, which is defined by
\be 
\label{udef}
u(t,x)=\ve g_1(t)+\frac{1}{2}\int_0^x\frac{v_1(t,y)-v_2(t,y)}{a(y)}\,dy,
\ee
is $C^2$-smooth and is a solution to \reff{eq}.
Moreover, if $\de^2_tv$ exists and is continuous, then $\de_t^3u$ exists and is continuous also.
\end{lemma}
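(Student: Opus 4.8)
The plan is to prove both implications by direct differentiation, working with the two characteristic combinations $v_1+v_2$ and $v_1-v_2$ and the fundamental theorem of calculus in $x$; no integration along characteristics is needed for this lemma.

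For part (i) I would start from $u$ solving \reff{eq}, define $v$ by \reff{trafo}, and first record the elementary identities $v_1+v_2=\frac{2}{T}\de_tu$ and $v_1-v_2=2a(x)\de_xu$, which identify $[Kv]$ with $\de_tu/T$, $[Lv]$ with $\de_xu$, and $[Jv](t,x)$ with $u(t,x)-u(t,0)=u(t,x)-\ve g_1(t)$; hence $\B(\ve,v)=b(x,u,\de_tu/T,\de_xu)+a(x)a'(x)\de_xu$. Then I compute $\frac{1}{T}\de_tv_1-a\de_xv_1$ and $\frac{1}{T}\de_tv_2+a\de_xv_2$: in each the mixed second derivative $\frac{a}{T}\de_t\de_xu$ cancels, what remains is $\frac{1}{T^2}\de_t^2u-a^2\de_x^2u-aa'\de_xu$, and substituting \reff{eq} turns this into $\ve f-\B(\ve,v)$, i.e. the two PDEs of \reff{FOS}. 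The boundary conditions at $x=0$ and $x=1$ then follow by evaluating $v_1\pm v_2$ there and inserting the boundary data for $u$ together with $u(t,1)=\ve g_1(t)+\frac{1}{2}\int_0^1\frac{v_1-v_2}{a(y)}\,dy$; periodicity is inherited from $u$.

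For part (ii) I would take $v$ solving \reff{FOS}, define $u$ by \reff{udef}, and first note that $\de_xu=[Lv]=\frac{v_1-v_2}{2a}$ lies in $C^1_{per}(\R\times[0,1];\R)$ (using $a\in C^2$, $a\ne0$, $v\in C^1$), so $\de_x^2u$ and $\de_t\de_xu$ exist and are continuous. The crucial step is to recover $\de_tu$: subtracting the two PDEs of \reff{FOS} gives $\frac{1}{T}\de_t(v_1-v_2)=a(x)\de_x(v_1+v_2)$; differentiating \reff{udef} in $t$, integrating this identity in $x$ from $0$ to $x$, and using the boundary condition $v_1(t,0)+v_2(t,0)=\frac{2\ve}{T}g_1'(t)$ to cancel the constant terms yields $\de_tu=\frac{T}{2}(v_1+v_2)$. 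Since $v\in C^1$, this makes $\de_tu\in C^1$, hence $\de_t^2u$ exists and is continuous, and altogether $u\in C^2_{per}(\R\times[0,1];\R)$. Adding the two PDEs of \reff{FOS} and substituting $\de_t(v_1+v_2)=\frac{2}{T}\de_t^2u$, $\de_x(v_1-v_2)=2a'\de_xu+2a\de_x^2u$ and $\B(\ve,v)=b(x,u,\de_tu/T,\de_xu)+aa'\de_xu$ (the last exactly as in part (i), since $\ve g_1+[Jv]=u$ because $u(t,0)=\ve g_1(t)$ by \reff{udef}) then reduces, after the $aa'\de_xu$ terms cancel, to the PDE of \reff{eq}; the boundary conditions come from \reff{udef} at $x=0$ and from the fourth line of \reff{FOS} together with the formula for $u(t,1)$ at $x=1$. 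Finally, $\de_t^3u=\frac{T}{2}\de_t^2(v_1+v_2)$, so continuity of $\de_t^3u$ follows from that of $\de_t^2v$.

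The only step that is not a mechanical computation is the reconstruction identity $\de_tu=\frac{T}{2}(v_1+v_2)$ in part (ii): the function $u$ from \reff{udef} a priori only involves $v_1-v_2$, and its relation to $v_1+v_2$ must be extracted from the structure of system \reff{FOS} together with the boundary data at $x=0$. This is precisely what upgrades $u$ from $C^1$ to $C^2$ (and the differentiated version of the same identity gives the $\de_t^3u$ claim), so it is where the precise form of \reff{FOS} is genuinely used; everything else is the chain rule and the fundamental theorem of calculus.
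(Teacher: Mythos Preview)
Your proposal is correct and follows essentially the same approach as the paper's own proof: both argue by direct computation with the characteristic combinations $v_1\pm v_2$, and in part (ii) both recover the key identity $\de_tu=\frac{T}{2}(v_1+v_2)$ by subtracting the two PDEs of \reff{FOS}, integrating in $x$, and using the boundary condition at $x=0$. Your emphasis on this reconstruction step as the only nontrivial point matches the structure of the paper's argument exactly.
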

\begin{proof}
  $(i)$ Let $u \in C_{per}^2(\R\times [0,1];\R^2)$ be given, and let $v \in 
 C^1_{per}(\R\times [0,1];\R^2)$ be defined by \reff{trafo}. Then
\be
  \label{KJ}
  \de_tu=T\frac{v_1+v_2}{2}=Kv,\;  \de_xu=\frac{v_1-v_2}{2a}=L v
  \ee
and $\de_tv_1=\de^2_tu/T+a\de_t\de_xu$,  
$\de_xv_1=\de_t\de_xu/T+a' \de_xu + a\de_x^2u$, $\de_tv_2=\de^2_tu/T-a\de_t\de_xu$ and  $\de_xv_2=\de_t\de_xu/T-a' \de_xu - a\de_x^2u$.
Hence
\be
  \label{uv}
  \frac{1}{T^2}\de_t^2u-a^2\de_x^2u-aa'\de_xu=
  \frac{1}{T}
  \de_tv_1-a\de_xv_1=
  \frac{1}{T}
  \de_tv_2+a\de_xv_2.
  \ee
Further, let $u$ be a solution to problem \reff{eq}.  
Then \reff{KJ} and the boundary conditions $u(t,0)=\ve g_1(t)$ and $\de_xu(t,1)+\gamma u(t,1)=\ve g_2(t)$ imply that $v_1(t,0)+v_2(t,0)=2\ve g'_1(t)/T$ and
  $v_1(t,1)-v_2(t,1)+\gamma a(1) [Lv](t,1)=2\ve a(1)g_2(t)$, i.e. the boundary conditions of \reff{FOS}.
Moreover, from $u(t,0)=\ve g_1(t)$ and \reff{KJ} it follows that $u(t,x)=\ve g_1(t)+[Jv](t,x)$.
  Hence, \reff{KJ}, \reff{uv} and the differential equation in \reff{eq} yield the differential equations in \reff{FOS}.

 $(ii)$ Let $v \in C_{per}^{1}(\R\times [0,1];\R^2)$ be a solution to \reff{FOS}, and 
let $u \in C_{per}^{1}(\R\times [0,1];\R)$ be defined by \reff{udef}.
Then
\begin{eqnarray*}
  \de_tu(t,x)&=&
  \ve g_1'(t)+\frac{1}{2}\int_0^x\frac{\de_tv_1(t,y)-\de_tv_2(t,y)}{a(y)}\,dy\\
  &=&\ve g_1'(t)+\frac{T}{2}
 \int_0^x(\de_xv_1(t,y)+\de_xv_2(t,y)\,dy=
 \frac{T}{2}
 (v_1(t,x)+v_2(t,x)).
  \end{eqnarray*}
  Here we used the first boundary condition and the differential equations in \reff{FOS}.
It follows that $\de_tu$ is $C^{1}$-smooth, and
  \be
  \label{uv1}
  \de^2_tu=\frac{T}{2}(\de_tv_1+\de_tv_2).
  \ee
  Further, \reff{udef} yields that $\de_xu=\de_x Jv=L v$,
  i.e.  $\de_xu$ is $C^{1}$-smooth also, i.e.  $u$ is $C^{2}$-smooth,
  and $2(a'\de_xu+a\de_x^2u)=\de_xv_1-\de_xv_2$, i.e.
   \be
  \label{uv3}
  a^2\de^2_xu=\frac{a}{2}(\de_xv_1-\de_xv_2)-\frac{a'}{2}(v_1-v_2).
  \ee
  But \reff{FOS}, \reff{uv1} and \reff{uv3} imply
the differential equation in \reff{eq}. 

The first boundary condition in \reff{eq} follows from \reff{udef}, and 
the second boundary conditions in \reff{FOS}
follows from $\de_xu=Kv$ and from the second boundary condition in \reff{FOS}.

Finally, suppose that $\de^2_tv$ exists and is continuous.
Then \reff{uv1} yields that also $\de_t^3u$ exists and is continuous.
\end{proof}

Unfortunately, we cannot apply  Theorems \ref{sys1} and \ref{sys2} directly to system \reff{FOS} because of the two small differences between \reff{FOS} and \reff{sys}:
First, there is an $\ve$-dependence in the nonlinearity $B(\ve,v)$ which leads to an additional (in comparison with \reff{Phidef}) term in the formula for the function $\Phi$.
Second, the equations and one boundary condition in \reff{FOS} are nonlocal. Therefore, we have to check again whether  the linearization of \reff{FOS} creates a Fredholm operator of index zero. 

Hence, we adapt the content of Section \ref{proofssys} to the situation of system \reff{FOS}.

\subsection{Fredholmness}
\label{Fredholmn}
As in  Section \ref{proofssys}, we introduce artificial terms $\beta_j(t,x)v_j(t,x)$ with some $\beta \in C_{per}(\R\times [0,1];\R^2)$ 
and rewrite \reff{FOS} as 
\be
\label{FOS1}
\left.
  \begin{array}{l}
\displaystyle    \frac{1}{T}\de_tv_1(t,x)-a(x)\de_xv_1(t,x)+\beta_1(t,x)v_1(t,x)=[B_1(\ve,\beta,v)](t,x)+\ve f(t,x),\\
 \displaystyle    \frac{1}{T}   
    \de_tv_2(t,x)+a(x)\de_xv_2(t,x)+\beta_2(t,x)v_2(t,x)=[B_2(\ve,\beta,v)](t,x)+\ve f(t,x),\\
 \displaystyle    
    v_1(t,0)+v_2(t,0)= \frac{2\ve}{T}  g_1'(t),\\
  \displaystyle      v_1(t,1)-v_2(t,1)+\gamma a(1)\int_0^1\frac{v_1(t,y)-v_2(t,y)}{a(y)}\,dy
    =2\ve a(1)g_2(t),\\
    v(t+1,x)=v(t,x),
  \end{array}
\right\}
\ee
where
\be
\label{BBdefi}
    [B_j(\ve,\beta,v)](t,x):=\beta_j(x,\la)v_j(t,x)
    -[\B(\ve,v)](t,x),\; j=1,2.
  \ee
Similar to Subsection \ref{Char},  system \reff{FOS} of integro-differential equations is equivalent to a system of partial integral equations. In order to state this more precisely,
for $j=1,2$, $t \in \R$, $x,y \in [0,1]$,
$\beta \in C_{per}(\R\times [0,1];\R^2)$
and  $T>0$,  we write
\begin{eqnarray*}
&&c_1(t,x,y,\beta,T):=\exp\int_y^x
\frac{\beta_1(t-\al(x,z)/T,z)}{a(z)}\,dz,\\
&&c_2(t,x,y,\beta,T):=\exp\int_x^y
\frac{\beta_2(t+\al(x,z)/T,z)}{a(z)}\,dz,
\end{eqnarray*}
where the function $\al$ is defined in \reff{bjk1}.
The system of partial integral equations is  (for $(t,x) \in \R \times [0,1]$)
\be
\label{parinteq}
\left.
\begin{array}{l}
\displaystyle v_1(t,x)-c_1(t,x,0,\beta,T)[-v_2(s,0)+2\ve g'_1(s)/T]_{s=t-\al(x,0)/T}\\
=\displaystyle
-\int_0^x\frac{c_1(t,x,y,\beta,T)}{a(y)}[B_1(\ve,\beta,v)+\ve f](t-\al(x,y)/T,y)\,dy,\\
\displaystyle v_2(t,x)-c_2(t,x,1,\beta,T)[v_1(s,1)
+2\ve a(1)g_2(s)]_{s=t+\al(x,1)/T}\\
\displaystyle =
\gamma a(1)c_2(t,x,1,\beta,T)\int_0^1\frac{v_1(t+\al(x,1)/T,y)-v_2(t+\al(x,1)/T,y)}{a(y)}\,dy\\
\displaystyle \;\;\;\;-\int_x^1\frac{c_2(t,x,y,\beta,T)}{a(y)}[B_2(\ve,\beta,v)+\ve f](t+\al(x,y)/T,y)\,dy,
\end{array}
\right\}
\ee
and, as in Subsection \ref{Char}, one can prove the following statement.
\begin{lemma}
\label{equiveq}
For all $\ve>0$,
$\beta \in C_{per}(\R\times [0,1];\R^2)$
and $T>0$ the following is true:

(i) Any solution to \reff{FOS1} is a solution to \reff{parinteq}.

(ii) If $v \in C_{per}(\R\times[0,1];\R^2)$ is a solution to \reff{parinteq} and if the partial derivatives $\partial_tv$ and $\de_t\beta$ exist and are continuous, then $v$ is $C^1$-smooth and $v$ is a solution to \reff{FOS1}.
\end{lemma}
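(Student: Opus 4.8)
The plan is to mimic, essentially word for word, the proof of Lemma \ref{equiv}, since the structural situation is the same: we want to show that the differential operators on the left-hand sides of \reff{FOS1} admit explicit right inverses given by integration along the characteristics $t \mapsto t \mp \al(x,\cdot)/T$, and that the boundary conditions are exactly what is recovered by setting $x=0$ (respectively $x=1$) in the integral equations. The only genuinely new feature compared with Lemma \ref{equiv} is the nonlocal terms: the integral $\gamma a(1)\int_0^1 (v_1-v_2)/a\,dy$ appearing in the second boundary condition of \reff{FOS1}, which also shows up inside $\B(\ve,v)$ via the operator $J$. But these terms are harmless for the argument because integration along characteristics treats the right-hand side $B_j(\ve,\beta,v)+\ve f$ as a given function of $(t,x)$; the fact that this function is built from $v$ via nonlocal operators plays no role in establishing the equivalence.

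First I would prove $(i)$. Given a $C^1$-solution $v$ to \reff{FOS1}, I would verify, exactly as in the proof of Lemma \ref{equiv}$(i)$, the identity
\be
\left(\frac{1}{T}\de_t - a(x)\de_x\right)c_1(t,x,y,\beta,T) = \beta_1(t,x)\,c_1(t,x,y,\beta,T),
\ee
together with $c_1(t,x,x,\beta,T)=1$ and $\de_y c_1 = -\beta_1 c_1/a$, and analogously for $c_2$ (with the opposite characteristic direction, hence the sign change in the exponent and in $\de_x$). Writing
\be
v_1(t,x) - c_1(t,x,0,\beta,T)\,v_1(t-\al(x,0)/T,0) = \int_0^x \de_y\!\left[c_1(t,x,y,\beta,T)\,v_1(t-\al(x,y)/T,y)\right]dy,
\ee
carrying out the $y$-differentiation, and substituting the first differential equation of \reff{FOS1} together with the first boundary condition, one obtains the first equation of \reff{parinteq}; the second equation of \reff{parinteq} follows the same way from the second differential equation and the second (nonlocal) boundary condition of \reff{FOS1}. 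For $(ii)$, given a continuous $1$-periodic solution $v$ to \reff{parinteq} with $\de_t v$, $\de_t\beta$ continuous, I would first note that setting $x=0$ in the first equation and $x=1$ in the second recovers the two boundary conditions of \reff{FOS1} (using $\al(0,0)=\al(1,1)=0$ and $c_j=1$ on the diagonal); that continuity of $\de_t v$, $\de_t\beta$ forces continuity of $\de_x v$ via the integral equations, so $v\in C^1_{per}$; and then, applying the operator $\frac{1}{T}\de_t - a(x)\de_x$ to the first equation of \reff{parinteq} and using the characteristic identities above, one recovers the first differential equation of \reff{FOS1}, and similarly for the second.

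The only point requiring a little care — and the one I expect to be the main obstacle in writing this cleanly — is bookkeeping for the nonlocal boundary term when applying $\frac{1}{T}\de_t + a(x)\de_x$ to the second equation of \reff{parinteq}: the factor $c_2(t,x,1,\beta,T)\,\gamma a(1)\int_0^1(v_1-v_2)/a\,dy$, evaluated along the characteristic at argument $t+\al(x,1)/T$, must be differentiated by the product rule, and one must check that the $\de_x c_2$ contribution cancels against the corresponding term coming from the integral equation itself, exactly as in the local case. Since the integral $\int_0^1(v_1-v_2)/a\,dy$ is evaluated at a shifted time $t+\al(x,1)/T$ and is independent of $x$ otherwise, the function $(t,x)\mapsto \int_0^1 (v_1-v_2)(t+\al(x,1)/T,y)/a(y)\,dy$ is constant along the characteristics $t+\al(x,1)/T=\text{const}$, i.e. it is annihilated by $\frac{1}{T}\de_t + a(x)\de_x$, just like the boundary data $g'_1,g_2$ in Lemma \ref{equiv}; so this term behaves precisely as a boundary datum and no new difficulty arises. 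I would therefore simply remark that the proof proceeds as in Lemma \ref{equiv}, with the nonlocal terms carried along passively, and spell out only the modified characteristic identities for $c_1,c_2$ (the new signs) and the verification of the nonlocal second boundary condition.
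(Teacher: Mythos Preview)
Your proposal is correct and matches the paper's own treatment: the paper does not write out a separate proof for Lemma~\ref{equiveq} but simply says that one proceeds ``as in Subsection~\ref{Char}'', i.e.\ exactly as in the proof of Lemma~\ref{equiv}. Your remarks about the nonlocal boundary term being constant along the characteristics of the second equation (hence behaving like passive boundary data) are the right way to handle the only new feature. One small slip: the characteristic identity for $c_1$ should read $\bigl(\tfrac{1}{T}\de_t - a(x)\de_x\bigr)c_1 = -\beta_1(t,x)\,c_1$, with a minus sign on the right, just as in \reff{chara1}; this is needed so that after applying the differential operator to the first equation of \reff{parinteq} the $\beta_1 v_1$ term combines correctly with $B_1(\ve,\beta,v)$ to recover $-\B(\ve,v)$.
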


Further, as in Subsection \ref{Char}, we write system \reff{parinteq} in an abstract form. 
For that we introduce  linear operators 
\begin{eqnarray*}
[Rv](t)&:=&(-v_2(t,0),v_1(t,1)),\\
\mbox{$[C(\beta,T)w](t,x)$}&:=&
\left[
\begin{array}{l}
c_1(t,x,0,\beta,T)w_1(t-\al(x,0)/T)\\
c_2(t,x,1,\beta,T)w_2(t+\al(x,1)/T)
\end{array}
\right],\\
\mbox{$[D(\beta,T)v](t,x)$}&:=&
\left[
\begin{array}{c}
\displaystyle-\int_0^x\frac{c_1(t,x,y,\beta,T)}{a(y)}v_1(t-\al(x,y)/T,y)\,dy\\
~\displaystyle-\int_x^1\frac{c_2(t,x,y,\beta,T)}{a(y)}v_2(t+\al(x,y)/T,y)\,dy
\end{array}
\right],\\
\mbox{$[E(\beta,T)v](t,x)$}&:=&
\left[
\begin{array}{c}
0\\
~\displaystyle
\gamma a(1)c_2(t,x,1,\beta,T)\int_0^1\frac{v_1(t+\al(x,1)/T,y)-v_2(t+\al(x,1)/T,y)}{a(y)}\,dy
\end{array}
\right]
\end{eqnarray*}
and functions $\tilde{f}:\R \times 
[0,1]\to\R^2$ and $\tilde{g}:\R \to\R^2$ and by
$$
\tilde{f}(t,x):=
\left[
\begin{array}{c}
f(t,x)\\
f(t,x)
\end{array}
\right],\;
\tilde{g}(t):=
\left[
\begin{array}{c}
2g_1'(t)\\
2a(1)g_2(t)
\end{array}
\right].
$$
Then problem \reff{parint} can be written
analogously to \reff{abstract} as
\be
\label{abstracteq}
v=C(\beta,T)(Rv+\ve \tilde{g}/T) +D(\beta,T)(B(\ve,\beta,v)+\ve \tilde{f})+E(\beta,T)v.
\ee

In what follows we will work with the solution
$v^0 \in  C^1_{per}(\R\times [0,1];\R^2)$ to \reff{FOS} with $\ve=0$ and $T=1$, which corresponds 
to the solution $u^0$ to \reff{eq}
with $\ve=0$ and $T=1$ via \reff{trafo}, i.e.
\be 
\label{vnulldef}
v^0_1(t,x):=\de_tu^0(t,x)+a(x)\de_xu^0(t,x),\;
v^0_2(t,x):=\de_tu^0(t,x)-a(x)\de_xu^0(t,x).
\ee
In order to calculate the linearization with respect to $v$ in $\ve=0$ and $v=v^0$ of the operator $\B$ (cf. \reff{RBdef}), we use the notation \reff{bjk1}.
We have
$$
\partial_v\B(0,v^0)v
=-a'(v_1-v_2)/2+
b_2J v+b_3K v+b_4Lv
=\beta^0_1v_1+\beta^0_2v_2
+b_2Jv
$$
with coefficients $\beta^0_j$ defined in \reff{betadef}.
From \reff{BBdefi} it follows that
  \be
\label{lineB}
\partial_vB_1(0,\beta^0,v^0)v=\beta_2v_2-b_2J v,\quad
\partial_vB_2(0,\beta^0,v^0)v=\beta_1v_1-b_2J v.
\ee
Hence, the linearization with respect to $v$ in $\ve=0$, $\beta=\beta^0$ and $v=v^0$ of the nonlinearity $B$ has a special structure: It is the sum
of a partial integral operator and of a ``pointwise'' operator, which has vanishing diagonal part.

\begin{lemma}
\label{Fredholmeq}
For all $T\approx 1$
the operator $I-C(\beta^0,T)R-D(\beta^0,T)\partial_vB(0,\beta^0,v^0)
+E(\beta^0,T)$
is Fredholm of index zero from $C_{per}(\R\times [0,1];\R^2)$ into itself.
\end{lemma}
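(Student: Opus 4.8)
The plan is to mimic the proof of the first-order analogue, Lemma \ref{Fredholm}, while absorbing the two new features of system \reff{FOS}: the nonlocal operator $E(\beta^0,T)$ produced by the Robin condition at $x=1$, and the extra partial integral summand --- the operator $J$ --- that appears in the linearized nonlinearity $\partial_v B(0,\beta^0,v^0)$, cf. \reff{lineB}. The first step is an analogue of Lemma \ref{Invert}: for $T\approx 1$ the operator $I-C(\beta^0,T)R$ is boundedly invertible on $C_{per}(\R\times[0,1];\R^2)$, with inverse bounded uniformly in $T$. As there, chasing the boundary data of $(I-C(\beta^0,T)R)v=f$ along the two characteristic families reduces the problem to a scalar functional equation $(I-\widetilde C(T))\tilde u=\tilde f$ on $C_{per}(\R)$; the multiplier $\widetilde C(T)$ is the monodromy of the boundary trace around the characteristic loop, and an elementary computation identifies its value at $T=1$ with $\exp$ of the integral in \reff{nonreseq} (resp. \reff{nonreseq1}). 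Here the two reflection coefficients have modulus $1$, so the non-resonance condition that keeps $\widetilde C(T)$ off the unit circle for $T\approx 1$ is precisely \reff{nonreseq} (resp. \reff{nonreseq1}); then, exactly as in Lemma \ref{Invert}, one of $\|\widetilde C(T)\|<1$, $\|\widetilde C(T)^{-1}\|<1$ holds and a Neumann series finishes the step.

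Granting this, abbreviate $C:=C(\beta^0,T)$, $D:=D(\beta^0,T)$, $E:=E(\beta^0,T)$, $B^0:=\partial_v B(0,\beta^0,v^0)$, and factor
\[
I-CR-DB^0+E=(I-CR)\bigl(I-(I-CR)^{-1}(DB^0-E)\bigr).
\]
Since $I-CR$ is invertible, by the Nikolskii criterion quoted in the proof of Lemma \ref{Fredholm} it suffices that $\bigl((I-CR)^{-1}(DB^0-E)\bigr)^2$ be compact. Writing $(I-CR)^{-1}=I+CR(I-CR)^{-1}$, exactly as in \reff{prod}, this reduces to the compactness, as operators on $C_{per}(\R\times[0,1];\R^2)$, of the six products
\[
DB^0D,\quad DB^0CR,\quad DB^0E,\quad EDB^0,\quad ECR,\quad E^2 .
\]

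For each product the mechanism is the one of Lemma \ref{Fredholm}: none of $C,D,E$ is itself compact, but after a single change of integration variable each product becomes a genuine two-dimensional integral operator with a $C^1$ kernel --- here one uses $a\in C^2$, $b\in C^3$ and $u^0\in C^2$, so that $\beta^0_1,\beta^0_2,b_2$ are $C^1$ --- hence an operator whose values have first-order derivatives in $(t,x)$ bounded uniformly on the unit ball; Arzel\`a--Ascoli then gives compactness, and all bounds are uniform in $T\approx 1$, which matters because $T\mapsto C,D,E$ are only strongly continuous. The substitutions are admissible thanks to three transversalities: that of the two PDE characteristic families, whose speeds $\pm a(x)$ are distinct because $a(x)\ne 0$ (this replaces the condition $a_1\ne a_2$ of the first-order case); that of a PDE characteristic against the vertical characteristic $t=\mathrm{const}$ of the operator $J$; and that of a PDE characteristic against the boundary-trace shift $y\mapsto t\pm\al(y,1)/T$ created by $E$. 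Concretely, decompose $B^0=B^0_p+B^0_J$ into its pointwise, vanishing-diagonal part and its $J$-part (cf. \reff{lineB}): then $DB^0_JD$, $DB^0_JCR$, $DB^0_JE$ are compact because already $DB^0_J$ is a full $2$D integral operator (the $x$-integration of $J$ composed, after reordering, with the characteristic integration of $D$, substituting $y\mapsto\eta$ with Jacobian $-1/(Ta(y))$); $DB^0_pD$ and $DB^0_pCR$ are compact by the verbatim computation of Lemma \ref{Fredholm}, the roles of the two characteristic speeds $a_1,a_2$ now being played by $\pm a(x)$; $DB^0_pE$, $EDB^0$, $ECR$ turn into full integrals after a substitution $y\mapsto\eta$ whose Jacobian is of the form $\pm 1/(Ta(y))$ or $\mp 2/(Ta(y))$, hence nonzero, the undifferentiated datum being the continuous function $\int_0^1(v_1-v_2)/a\,dy$ or a boundary trace of $v$; and $E^2$ is compact because, after that substitution in the inner copy of $E$, the function fed to the outer copy is already $C^1$ in its time argument.

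The \emph{hard part} is the nonlocal operator $E$: one has to check by hand that each of the four products containing $E$ is smoothing, i.e. that the relevant substitutions $y\mapsto\eta$ have non-vanishing Jacobian and produce $C^1$ kernels; this is routine but has no counterpart in Lemma \ref{Fredholm}. One should also note that, unlike elsewhere in this section --- where \reff{nonreseq}--\reff{nonreseq1} are invoked for the extra regularity of $u^0$ --- here these conditions are needed already for the invertibility of $I-C(\beta^0,T)R$, because the moduli of the reflection coefficients at both ends of $[0,1]$ equal $1$.
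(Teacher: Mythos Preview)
Your proposal is correct and follows essentially the same route as the paper: invoke the analogue of Lemma~\ref{Invert} (the paper simply observes that \reff{nonressys}--\reff{nonressys1} with $a_1=-a$, $a_2=a$, $r_1=-1$, $r_2=1$ become \reff{nonreseq}--\reff{nonreseq1}), factor through $I-CR$, apply Nikolskii, and reduce to the compactness of a finite list of operator products, each handled by a change of integration variable and Arzel\`a--Ascoli. The only cosmetic difference is bookkeeping: the paper writes $\partial_vB(0,\beta^0,v^0)=B^0+\J$ and, after noting that $DB^0D$ and $DB^0CR$ were already treated in Lemma~\ref{Fredholm}, reduces everything to the five operators $D\J$, $DB^0E$, $ED$, $E^2$, $ECR$ (your $EDB^0$ follows from their $ED$ since $B^0$ is bounded), whereas you list six products with the full $B^0$ and then split $B^0=B^0_p+B^0_J$---the resulting computations coincide.
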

\begin{proof}
The conditions \reff{nonressys} and \reff{nonressys1} with $a_1=-a$, $a_2=a$, $r_1=-1$ and $r_2=1$ are just the conditions  
\reff{nonreseq} and \reff{nonreseq1}.
Hence, Lemma \ref{Invert} yields that for $T \approx 1$ the operator $I-C(\beta^0,T)R$ is an isomorphism from $C_{per}(\R\times[0,1];\R^2)$ onto itself, and $(I-C(\beta^0,T)R)^{-1}$ is bounded uniformly with respect to $T \approx 1$.
Therefore, we can proceed as in the proof of Lemma \ref{Fredholm}.

Because of the Fredholmness criterion of Nikolskii, it suffices to show that the operator 
$
\left((I-C(\beta^0,T)R)^{-1}(
D(\beta^0,T)\partial_vB(0,\beta^0,v^0)
+E(\beta^0,T))\right)^2
$
is compact from $C_{per}(\R\times[0,1];\R^2)$ into itself.
Further, because of \reff{lineB}, we have
$$
\de_vB(0,\beta^0,v^0)=B^0+\J \mbox{ with }
B^0v:=(\beta^0_2v_2,\beta^0_1v_1),\;
\J v:=-(b_2Jv,b_2Jv).
$$
In what follows, we will not mention the dependence 
of the operators and the coefficient functions
on $\beta^0$ and $T$.
As in the proof of Lemma \ref{Fredholm} (cf. \reff{prod}), it suffices to show that
the operators 
\begin{eqnarray*}
&&(D(B^0+\J)-E)^2\\
&&=DB^0DB^0+D\J D\J+E^2+DB^0D\J+D\J DB^0-DB^0E-EDB^0-D\J E-ED\J
\end{eqnarray*}
and 
$$
(D(B^0+\J)-E)CR=DB^0CR+D\J CR-ECR
$$
are compact 
from $C_{per}(\R\times [0,1];\R^2)$ into itself.
Since in the proof of Lemma \ref{Fredholm} we already showed  that the operators $DB^0D$ and $DB^0CR$ are compact
from $C_{per}(\R\times [0,1];\R^2)$ to itself,
 it suffices to show that  the operators
\be 
\label{operators}
D\J, \; DB^0E, \; ED, \; E^2
\mbox{ and } ECR
\ee
are compact
from $C_{per}(\R\times [0,1];\R^2)$ into itself.

Let us start with $D\J$. 
The first component of this operator works as follows:
\begin{eqnarray*}
&&[D_1\J v](t,x)\\
&&=-\frac{1}{2}
\int_0^x\frac{c_1(t,x,y)}{a(y)}b_2(t-\al(x,y)/T,y)
\int_0^y
\frac{v_1(t-\al(y,z)/T,z)-v_2(t-\al(y,z)/T,z)}{a(z)}\,dzdy\\
&&=\int_0^x\frac{1}{a(z)}\int_z^x
\frac{c_1(t,x,y)}{a(y)}
[b_2(s,y)(v_1(s,z)-v_2(s,z))]_{s=t-\al(x,y)/T}\,dydz.
\end{eqnarray*}
In the inner integral we change the integration variable $y$ by
$\eta=\widehat{\eta}(t,y,z):=t-\al(y,z)/T$, hence
$d\eta=\frac{dy}{Ta(y)}$.
If $y=\widehat{y}(t,\eta,z)$ is the inverse transformation, then we get
\begin{eqnarray*}
&&[D_1\J v](t,x)\\
&&=\int_0^x\frac{1}{a(z)}\int_t^{t-\al(x,z)/T}
\frac{c_1(t,x,wide{y}(t,\eta,z))}{a(\widehat{y}(t,\eta,z))}
b_1(t,\widehat{y}(t,\eta,z)(v_1(\eta ,z)-v_2(\eta ,z))d\eta \,dz.
\end{eqnarray*}
Now one can see that, for all $v \in C_{per}(\R\times [0,1];\R^2)$,
the function
$D_1\J v$ is $C^1$-smooth, and similarly for the second component $D_2\J v$. Moreover, we have
$$
\sup\{\|\de_tD\J v\|_\infty+
\|\de_xD\J v\|_\infty: \; v \in C_{per}(\R\times [0,1];\R^2), \; 
\|v\|_\infty=1\}<\infty.
$$
Hence the Arzela-Ascoli Theorem yields the needed result.

Now, let us consider $DB^0E$.
Because of $E_1=0$,
we have $B_2^0E=0$ and, hence, $D_2(T)B_2^0E=0$. The first component $D_1B^0E$
works as 
\begin{eqnarray*}
[D_1B^0E v](t,x)
=
\int_0^1
\int_0^xd(t,x,y,z,) [v_1(s,z)-v_2(s,z)]_{s=t-(\al(x,y)-\al(y,1))/T,z)}\,dydz,
\end{eqnarray*}
where
$$
d(t,x,y,z):=-\gamma\frac{a(1)}{a(y)a(z)}
c_1(t,x,y)c_2(t-\al(x,y)/T,y,1)
b_{2}^0(t-\al(x,y)/T,y).
$$
In the inner integral we change the integration variable $y$ to
$\eta=t-(\al(x,y)-\al(y,1))/T$,
 and further proceed as above.

Next we  consider $ED$.
Since $E_1=0$, we are reduced to deal with
 the second component $E_2D$, which 
works as 
\begin{eqnarray*}
[E_2D v](t,x)
&=&
\int_0^x
\int_0^1d_1(t,x,y,z) v_1(t+(\al(x,1)-\al(y,z))/T,z)\,dydz\\
&&+\int_x^1\int_0^1d_2(t,x,y,z) v_2(t+(\al(x,1)+\al(y,z))/T,z)\,dydz,
\end{eqnarray*}
where  $d_1$ and $d_2$ are certain smooth functions, which do not depend on $v$.
In the first inner integral we change the integration variable $y$ to
$\eta=t-(\al(x,1)-\al(y,z))/T$,
and in the second inner integral to
$\eta=t-(\al(x,1)+\al(y,z))/T$.
Afterwards,  we can proceed as above.

Now, let us consider $E^2$.
Because of $E_1=0$
we have to consider the second component $E_2E$, which 
works as 
\begin{eqnarray*}
&&[E_2E v](t,x)\\
&&=
\int_0^1
\int_0^1d(t,x,y,z) [v_1(s,z)-v_2(s,z)]_{s=t+(\al(x,1)+\al(y,1))/T,z)}\,dydz
\end{eqnarray*}
with a smooth function $d$, which does not depend on $v$.
In the inner integral we change the integration variable $y$ to
$\eta=t+(\al(x,1)+\al(y,1))/T$,
and we again can proceed as above.

Finally, let us consider the last operator in \reff{operators}, which is $ECR$. The second component works as 
\begin{eqnarray*}
[E_2CR v](t,x)&=&
\int_0^1
d_1(t,x,y) v_1(t+(\al(x,1)+\al(y,0))/T,1)\,dy\\
&&+\int_0^1
d_2(t,x,y) v_2(t+(\al(x,1)+\al(y,0))/T,0)\,dy
\end{eqnarray*}
with smooth functions $d_1$ and $d_2$, which do not depend on $v$.
In both integrals we change the integration variable $y$ by
$\eta=t+(\al(x,1)-\al(y,0))/T$,
and we can proceed as above.
\end{proof}

Now, as in Subsection \ref{Regularity}, one can show that the solution $v^0$ to
 the autonomous nonlinear problem \reff{FOS} with $\ve=0$ and $T=1$ has additional regularity, i.e. additionally to the $C^1$-smoothness of $v^0$ the second derivative $\de^2_tv^0$ exists and is continuous.
Hence, Lemma \ref{lem:FOS} $(ii)$ yields, that also
the solution $u^0$ 
to the autonomous nonlinear problem \reff{eq} with $\ve=0$ and $T=1$ has additional regularity, namely that $\de_t^3u^0$ exists and is continuous.

\begin{remark}
\label{countereq}\rm
The following example shows that, if  
\reff{unull1} is satisfied but neither \reff{nonreseq} nor \reff{nonreseq1} is satisfied, then it may happen that $\de_t^3u^0$ does not exist.
We take
$a(x)\equiv 4$,  $b(x,u,v,w)\equiv0$, $\gamma=0$
and
$$
u^0(t,x)=\frac{1}{8}\int_0^x
\Psi(4t+y)+\Psi(4t-y))\,dy,
$$
where  $\Psi \in C^1(\R)\setminus C^2(\R)$ satisfies $\Psi(t+2)=-\Psi(t)$ for all $t \in \R$.
Then  
$$
\de_tu^0(t,x)=\frac{1}{2}\int_0^x
\Psi'(4t+y)+\Psi'(4t-y))\,dy=\frac{1}{2}(\Psi(4t+x)-\Psi(4t-x)).
$$
Hence, $\de^2_tu^0(t,x)=2(\Psi'(4t+x)-\Psi'(4t-x))$ is not differentiable with respect to $t$.
On the other hand,
$$
\de_xu^0(t,x)=\frac{1}{8}
\Psi(4t+x)+\Psi(4t-x)),
\mbox{ i.e. }
\de^2_xu^0(t,x)=\frac{1}{8}
\Psi'(4t+x)-\Psi'(4t-x)),
$$
i.e. $\de^2_tu^0(t,x)=16\de_x^2u^0(t,x)$ and $u^0(t,0)=\de_xu^0(t,1)=0$.
\end{remark}

\subsection{Proofs of Theorems \ref{eq1} and \ref{eq2}}
\label{Proofeq1}
Analogously to Lemma \ref{lemker} one can prove that
\begin{eqnarray*}
&&\ker(I-C(\beta^0,1)R-D(\beta^0,1)\partial_uB(0,\beta^0,v^0))=\mbox{span}\{\de_tv^0\},\\
&&\im(I-C(\beta^0,1)R-D(\beta^0,1)\partial_uB(\beta^0,u^0))=\ker \phi,\\
&&C_{per}(\R\times[0,1];\R^2)
=\ker \phi\oplus
\mbox{span}\{D(\beta^0,1)\de_tv^0\},
\end{eqnarray*}
where the functional $\phi$ is defined analogously to \reff{phidef} by
\begin{eqnarray} 
\label{phidefeq}
\phi(v)&:=&\int_0^1\int_0^1
\left([\A_1v](t,x)v_1^*(t,x)+[\A_2v](t,x)v_2^*(t,x)\right)\,dtdx\nonumber\\
&&-\int_0^1\left(a(0)v_1(t,0)v_1^*(t,0)
+a(1)v_2(t,1)v_2^*(t,1)\right)dt
\end{eqnarray}
for $v \in C^1_{per}(\R\times[0,1];\R^2)$.
The operator $\A$ from $C^1_{per}(\R\times[0,1];\R^2)$ into $C_{per}(\R\times[0,1];\R^2)$ is defined analogously to \reff{Adef} by
\begin{eqnarray*}
&&[\A_1v](t,x):=\partial_tv_1(t,x)-a(x)\partial_xv_1(t,x)+
\beta_1(t,x)v_1(t,x),\\
&&[\A_2v](t,x):=\partial_tv_2(t,x)+a(x)\partial_xv_2(t,x)+
\beta_1(t,x)v_2(t,x).
\end{eqnarray*}
Moreover, $v^* \in C^1_{per}(\R\times[0,1];\R^2)$ 
is the solution to the linear homogeneous problem 
\be 
\label{U}
\left.
\begin{array}{l}
-\de_tv_1(t,x)+\de_x(a(x)v_1(t,x))
+\beta_1(t,x)(v_1(t,x)+v_2(t,x))\\
\displaystyle
\;\;\;\;\;\;=-\frac{1}{2a(x)}\left(\int_x^1b_2(t,y)(v_1(t,y)+v_2(t,y))\,dy
+\gamma a(1)^2v_2(t,1)\right),\\
-\de_tv_2(t,x)-\de_x(a(x)v_2(t,x))
+\beta_2(t,x)(v_1(t,x)+v_2(t,x))\\
\displaystyle
\;\;\;\;\;\;=\frac{1}{2a(x)}\left(\int_x^1b_2(t,y)(v_1(t,y)+v_2(t,y))\,dy
+\gamma a(1)^2v_2(t,1)\right),\\
\displaystyle v_1(t,0)+v_2(t,0)= 
v_1(t,1)-v_2(t,1)=0,\;
v(t+1,x)=v(t,x)
\end{array}
\right\}
\ee
with normalization
\be 
\label{norm2} 
\sum_{j=1}^2\int_0^1\int_0^1\de_tv_j^0(t,x)v_j(t,x)\,dtdx=1.
\ee
The following calculation shows that \reff{U} is the adjoint problem to the linearization of \reff{FOS} with respect to $v$ in $\ve=0$, $T=1$ and $v=v^0$:
For all $v,w \in C^1_{per}(\R\times [0,1];\R^2)$ with $v_1(t,0)+v_2(t,0)=0$ and $v_1(t,1)-v_2(t,1)
+\gamma a(1)\int_0^1(v_1(t,y)-v_2(t,y))/a(y)\,dy=0$
we have
\begin{eqnarray*}
&&\int_0^1\int_0^1\Big[\de_tv_1(t,x)-a(x)\de_x v_1(t,x)+\beta_1(t,x)v_1(t,x)+\beta_2(t,x)v_2(t,x)\\
&&+\frac{b_2(t,x)}{2}
\int_0^x\frac{v_1(t,y)-v_2(t,y)}{a(y)}\,dy\Big]w_1(t,x)\,dtdx\\
&&+\int_0^1\int_0^1\Big[\de_t v_2(t,x)+a(x)\de_x v_2(t,x)+\beta_1(t,x)v_1(t,x)+\beta_2(t,x)v_2(t,x)\\
&&+\frac{b_2(t,x)}{2}
\int_0^x\frac{v_1(t,y)-v_2(t,y)}{a(y)}\,dy\Big]w_2(t,x)\,dtdx\\
&&=\int_0^1\int_0^1\bigg[-\de_t w_1(t,x)+\de_x(a(x)w_1(t,x))+\beta_1(t,x)(w_1(t,x)+w_2(t,x))\\
&&+\frac{1}{2a(x)}\left(\gamma a(1)^2w_2(1)+\int_x^1b_2(t,y)(w_1(t,y)+w_2(t,y))\,dy\right)\bigg]v_1(t,x)\,dtdx\\
&&+\int_0^1\int_0^1\bigg[-\de_t w_2(t,x)-\de_x(a(x)w_2(t,x))+\beta_2(t,x)(w_1(t,x)+w_2(t,x))\\
&&+\frac{1}{2a(x)}\left(\gamma a(1)^2w_2(1)+\int_x^1b_2(t,y)(w_1(t,y)+w_2(t,y))\,dy\right)\bigg]v_2(t,x)\,dtdx\\
&&+\int_0^1\big[a(0)(w_1(t,0)+w_2(t,0))v_1(t,0)
+a(1)(-w_1(t,1)+w_2(t,1))v_1(t,1)\big]dt.
\end{eqnarray*}

\begin{lemma}
\label{vstern}
We have $v_1^*=(u^*-\tilde{u})/2$ and
$v_2^*=(u^*+\tilde{u})/2$ with
\be 
\label{tildeu}
\tilde{u}(t,x):=\frac{1}{a(x)}\int_x^1\left(
\de_tu^*(t,y)-b_3(t,y)u^*(t,y)\right)\,dy.
\ee
\end{lemma}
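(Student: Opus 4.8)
The plan is to derive the stated identities by ``integrating up'' the first-order adjoint system \reff{U} into the second-order adjoint equation \reff{adeq}, and then to conclude by uniqueness. Put $\xi:=v_1^*+v_2^*$ and $\eta:=v_1^*-v_2^*$, so that $v_1^*=(\xi+\eta)/2$ and $v_2^*=(\xi-\eta)/2$; it then suffices to show that $\xi=u^*$ and $\eta=-\tilde u$.

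First I would \emph{add} the two differential equations in \reff{U}. Since their right-hand sides are negatives of each other and $\beta_1^0+\beta_2^0=b_3$ (cf.\ \reff{betadef}), the sum reduces to the first-order relation $-\de_t\xi+\de_x(a\eta)+b_3\xi=0$. Integrating this in $x$ from $x$ to $1$ and using the boundary condition $v_1^*(t,1)-v_2^*(t,1)=0$, i.e.\ $\eta(t,1)=0$, yields $a(x)\eta(t,x)=-\int_x^1(\de_t\xi(t,y)-b_3(t,y)\xi(t,y))\,dy$, which is precisely $\eta=-\tilde u$ with $u^*$ replaced by $\xi$ in \reff{tildeu}. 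Next I would \emph{subtract} the two equations in \reff{U}; using $\beta_1^0-\beta_2^0=a'+b_4/a$ this gives a scalar equation for $\xi$ and $\eta$ containing $\int_x^1 b_2\xi\,dy$ and a boundary value at $x=1$. Substituting the expression for $\eta$ just obtained, multiplying through by $a$, and using the elementary identities $-a\,\de_t\eta=\int_x^1(\de_t^2\xi-\de_t(b_3\xi))\,dy$ and $a\,\de_x(a\xi)=\de_x(a^2\xi)-aa'\xi$, this becomes $\int_x^1\bigl(\de_t^2\xi-\de_t(b_3\xi)+b_2\xi\bigr)\,dy+\de_x(a^2\xi)+b_4\xi+\gamma a(1)^2v_2^*(t,1)=0$. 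Differentiating in $x$ eliminates the integral and gives exactly the differential equation of \reff{adeq} for $\xi$; evaluating the undifferentiated equation at $x=1$, together with $\de_x(a^2\xi)|_{x=1}=2a(1)a'(1)\xi(t,1)+a(1)^2\de_x\xi(t,1)$ and $v_1^*(t,1)=v_2^*(t,1)$, gives the mixed boundary condition of \reff{adeq}; and the first boundary condition $\xi(t,0)=0$ is immediate from $v_1^*(t,0)+v_2^*(t,0)=0$.

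It remains to match the normalizations. I would start from \reff{norm2}, insert \reff{vnulldef} to get $\sum_j\de_tv_j^0\,v_j^*=\de_t^2u^0\,\xi+a\,\de_t\de_xu^0\,\eta$, substitute $a\eta=-\int_x^1(\de_t\xi-b_3\xi)\,dy$, integrate by parts in $x$ (the boundary terms vanish because the upper limit gives zero and $u^0(t,0)=0$ forces $\de_tu^0(t,0)=0$), and then integrate by parts in $t$ using $1$-periodicity. This reduces \reff{norm2} to $\int_0^1\int_0^1(2\de_t^2u^0+b_3\de_tu^0)\,\xi\,dt\,dx=1$, which is exactly \reff{keradeq} for $\xi$. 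Since the solution space of \reff{adeq} is one-dimensional (being the adjoint problem of \reff{lineq}, whose solution space is one-dimensional by \reff{kereq}), the normalization \reff{keradeq} determines its unique solution $u^*$, whence $\xi=u^*$. Combined with $\eta=-\tilde u$, this gives $v_1^*=(u^*-\tilde u)/2$ and $v_2^*=(u^*+\tilde u)/2$, as claimed.

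The delicate part will be the coefficient bookkeeping in the ``integrating up'' step: one has to verify that all the terms involving $a$, $a'$, $b_4$ and $\gamma$ reorganize so that \reff{U} genuinely recovers \reff{adeq}, and in particular that the rather unobvious boundary coefficient $a(1)(2a'(1)+\gamma a(1))+b_4(t,1)$ in \reff{adeq} — itself reverse-engineered, cf.\ Remark \ref{remad} — emerges correctly from the boundary data of \reff{U}, where one must track carefully how $v_1^*(t,1)$, $v_2^*(t,1)$ and $\int_0^1(v_1^*-v_2^*)/a\,dy$ enter. A secondary technicality is the double integration by parts in the normalization step, which has to produce precisely the combination $2\de_t^2u^0+b_3\de_tu^0$ appearing in \reff{keradeq} and no other multiple of it.
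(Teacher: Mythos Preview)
Your approach is correct but runs the paper's argument in reverse. The paper takes the candidate $v_1:=(u^*-\tilde u)/2$, $v_2:=(u^*+\tilde u)/2$ and verifies directly that it satisfies \reff{U} and \reff{norm2}; since $v^*$ is by definition \emph{the} solution of \reff{U} with \reff{norm2} (its uniqueness being already part of the first-order Fredholm framework established in Lemma~\ref{Fredholmeq}), this finishes the proof. You instead start from $v^*$, show that $\xi:=v_1^*+v_2^*$ satisfies \reff{adeq} and \reff{keradeq}, and conclude $\xi=u^*$ by uniqueness. The algebraic content---taking sum and difference of the two lines of \reff{U}, the relation $\de_x(a\eta)=\de_t\xi-b_3\xi$, the use of $\beta_1^0\pm\beta_2^0$ from \reff{betadef}, and the normalization computation landing on $2\de_t^2u^0+b_3\de_tu^0$---is literally identical in both proofs, just read in the opposite direction. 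The paper's direction is slightly cleaner logically, because the uniqueness it relies on (of $v^*$) is already in hand, whereas the one-dimensionality of the solution space of \reff{adeq} that you invoke is not proved independently in this paper; it follows most naturally from the first-order Fredholm result together with precisely the correspondence $v^*\leftrightarrow u^*$ you are constructing. One extra sentence making this point (or simply noting that the paper's verification direction supplies the inverse map) would close the loop in your argument.
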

\begin{proof}
 Let us show that the functions $v_1:=(u^*-\tilde{u})/2$ and $v_2:=(u^*+\tilde{u})/2$
satisfy \reff{U} and \reff{norm2}.

We have $v_1(t,0)+v_2(t,0)=u^*(t,0)=0$ because of the boundary condition of $u^*$ at $x=0$ (cf. \reff{adeq}). Moreover, we have $v_1(t,1)-v_2(t,1)=\tilde{u}(t,1)=0$ because of \reff{tildeu}.

Now, to verify that the integro-differential equations in \reff{U}
are satisfied, we have to show that
\begin{eqnarray*}
&&-\de_t(u^*-\tilde{u})+
\de_x(a(u^*-\tilde{u}))
+2\beta_1u^*\\
&&+\frac{1}{a(x)}\left(\int_x^1b_2(t,y)u^*(t,y)\,dy
+\frac{\gamma a(1)^2}{4}u^*(t,1)\right)
=0,\\
&&-\de_t(u^*+\tilde{u})-
\de_x(a(u^*+\tilde{u}))
+2\beta_2u^*\\
&&-\frac{1}{a(x)}\left(\int_x^1b_2(t,y)u^*(t,y)\,dy
+\frac{\gamma a(1)^2}{4}u^*(t,1)\right)
=0.
\end{eqnarray*}
This is satisfied if and only if the sum and the difference of these two equations are satisfied, i.e. 
\begin{eqnarray}
\label{1}
&&-\de_tu^*-
\de_x(a\tilde{u})
+(\beta_1+\beta_2)u^*=0,\\
\label{2}
&&\de_t\tilde{u}+
\de_x(au^*)
+(\beta_1-\beta_2)u^*\nonumber\\
&&+\frac{1}{a(x)}\left(\int_x^1b_2(t,y)u^*(t,y)\,dy
+\gamma a(1)^2u^*(t,1)\right)
=0
\end{eqnarray}
Equation \reff{1} is satisfied because of 
$\beta_1+\beta_2=b_3$ (cf. \reff{betadef})
and  \reff{tildeu}. 

In order to verify equation \reff{2}, we  use the definition \reff{tildeu} of $\tilde{u}$ 
and the fact that $u^*$ satisfies \reff{adeq}. It follows that
\begin{eqnarray}
\label{dtu}
\de_t\tilde{u}(t,x)&=&\frac{1}{a(x)}\int_x^1\left(
\de^2_tu^*(t,y)-\de_t(b_3(t,y)u^*(t,y))\right)\,dy\nonumber\\
&=&\frac{1}{a(x)}\int_x^1\left(
\de^2_y(a(y)^2u^*(t,y))-b_2(t,y)u^*(t,y)+\de_x(b_4(t,y)u^*(t,y))\right)\,dy\nonumber\\
&=&-\frac{1}{a(x)}\int_x^1
b_2u^*\,dy+\frac{1}{a(x)}
\left[(\de_x(a(x)^2u^*(t,x))+b_4(t,y)u^*(t,y)\right]_{y=x}^{y=1}\nonumber\\
&=&-\frac{1}{a(x)}\int_x^1
b_2u^*\,dy-\left(2a'(x)+\frac{b_4(t,x)}{a(x)}\right)u^*(t,x)-a(x)\de_xu^*(t,x)\nonumber\\
&&-\frac{\gamma a(1)^2}{a(x)}u^*(t,1)\nonumber\\
&=&-\frac{1}{a(x)}\int_x^1
b_2u^*\,dy-\left(a'(x)+\frac{b_4(t,x)}{a(x)}\right)u^*(t,x)-\de_x(a(x)u^*(t,x))\nonumber\\
&&-\frac{\gamma a(1)^2}{a(x)}u^*(t,1).
\end{eqnarray}
Since $\beta_1-\beta_2=a'+b_4/a$ (cf. \reff{betadef}) this is just equation \reff{2}.

Finally, let us verify the normalization condition \reff{norm2}. Using  normalization condition \reff{keradeq},  definition \reff{vnulldef} of $v^0$, equation \reff{1} and the boundary conditions  $u^0(t,0)=\tilde{u}(t,1)=0$, we get
\begin{eqnarray*}
&&\sum_{j=1}^2\int_0^1
\int_0^1\de_tv_j^0v_j\,dtdx\\
&&=\frac{1}{2}\int_0^1
\int_0^1\left((\de^2_tu^0+a\de_t\de_xu^0)(u^*-\tilde{u})+
(\de^2_tu^0-a\de_t\de_xu^0)(u^*+\tilde{u})
\right)\,dtdx\\
&&=\int_0^1
\int_0^1\left(\de^2_tu^0u^*
+a\de_t\de_xu^0\tilde{u}\right)\,dtdx
=\int_0^1
\int_0^1\left(\de^2_tu^0u^*
-\de_tu^0\de_x(a\tilde{u})\right)\,dtdx\\
&&=\int_0^1
\int_0^1\left(\de^2_tu^0u^*
-\de_tu^0(\de_tu^*-b_3u^*)\right)\,dtdx
=\int_0^1
\int_0^1\left(2\de^2_tu^0u^*
+b_3\de_tu^0u^*\right)\,dtdx=1.
\end{eqnarray*}
\end{proof}

Now we proceed as in Subsection \ref{Ansatz}.
We make the following
ansatz in \reff{abstracteq}:
$$
T=1+\ve \tau,\; v=S_\vp(v^0+\ve w) \mbox{ with } \tau \in \R \mbox{ and } \sum_{j=1}^2\int_0^1\int_0^1w_jv_j^*\,dtdx=0,\;\beta=S_\vp\beta^0.
$$
Inserting this into \reff{abstracteq}, applying $S_\vp^{-1}$, dividing by $\ve$ and tending $\ve$ to zero,  we get
\begin{eqnarray}
\label{epsnulleq1}
&&(I-C(\beta^0,1)R-D(\beta^0,1)(\partial_uB(0,\beta^0,v^0))w-
D(\beta^0,1)\de_\ve B(0,\beta^0,v^0))
\nonumber\\
&&=\tau\left(\partial_TC(\beta^0,1)Rv^0
+\partial_TD(\beta^0,1)
B(0,\beta^0,v^0)\right)\nonumber\\
&&\;\;\;\;+C(\beta^0,1)S_\vp^{-1}\tilde{g}
+D(\beta^0,1)S_\vp^{-1}\tilde{f}.
\end{eqnarray}
This is an analogue of \reff{epsnulleq}, 
with one additional term $D(\beta^0,1)\de_\ve B(0,\beta^0,v^0))$.
As in Section \ref{proofssys}, we have
$$
\phi((I-C(\beta_0,1)R-D(\beta_0,1)(\partial_uB(0,\beta_0,v^0))w)=0
$$ 
and 
$\phi(\partial_TC(\beta_0,1)Rv^0
+\partial_TD(\beta_0,1)
B(0,\beta_0,v^0))=1$ (cf. \reff{norma}).
Hence, 
$$
\tau=-\phi(C(\beta_0,1)S_\vp^{-1}\tilde{g}+
D(\beta_0,1)(\de_\ve B(0,\beta_0,v^0)
+S_\vp^{-1}\tilde{f})).
$$
In order to prove Theorem \ref{eq1}, it remains to show that this is the phase equation, i.e. that
\begin{eqnarray}
\label{a}
&&\phi(C(\beta_0,1)S_\vp^{-1}\tilde{g}+
D(\beta_0,1)(\de_\ve B(0,\beta_0,v^0)
+S_\vp^{-1}\tilde{f}))\nonumber\\
&&=
\int_0^1\int_0^1f(t-\vp,x)u^*(t,x)\,dtdx\nonumber\\
&&\;\;\;\;+\int_0^1\left(a(1)^2g_2(t-\vp)u^*(t,1)+
a(0)^2g_1(t-\vp)\de_xu^*(t,0))\right)\,dt.
\end{eqnarray}

To prove \reff{a}, let $h_j:=C_j(\beta_0,1)S_\vp^{-1}\tilde{g}+
D_j(\beta_0,1)(\de_\ve B(0,\beta_0,v^0)
+S_\vp^{-1}\tilde{f})$ for $j=1,2$.
Then \reff{phidefeq} yields
\begin{eqnarray}
\label{b}
&&\phi(C(\beta_0,1)S_\vp^{-1}\tilde{g}+
D(\beta_0,1)(\de_\ve B(0,\beta_0,v^0)
+S_\vp^{-1}\tilde{f}))=\sum_{j=1}^2(\de_\ve B_j(0,\beta_0,v^0)
+S_\vp^{-1}\tilde{f}_j)v_j^*ß,dtdx\nonumber\\
&&-\int_0^1\left(a(0)h_1(t,0)v_1^*(t,0)+a(1)h_2(t,1)v_2^*(t,1)\right)\,dt.
\end{eqnarray}
 Since $\tilde{f}_j=f$ for $j=1,2$, then, due to  \reff{RBdef} and \reff{BBdefi}, 
$\de_\ve B_j(0,\beta_0,v^0)](t,x)=-b_2(t,x)g_1(t-\vp)$. Hence, \reff{tildeu} implies the equality
$$
\sum_{j=1}^2(\de_\ve B_j(0,\beta_0,v^0)
+S_\vp^{-1}\tilde{f}_j)v_j^*\,dtdx
=\int_0^1\int_0^1(-b_2(t,x)g_1(t-\vp)+f(t-\vp,x))
u^*(t,x)\,dtdx.
$$
Moreover, $h_1(t,0)=\tilde{g}_1(t-\vp)=2g_1'(t-\vp)$, $h_2(t,1)=\tilde{g}_2(t-\vp)=2a(1)g_2(t-\vp)$, $v_1^*(t,0)=\tilde{u}(t,0)/2$ and 
$v_2^*(t,1)=u^*(t,1)/2$. Therefore,
\begin{eqnarray}
\label{c}
&&\int_0^1\left(a(0)h_1(t,0)v_1^*(t,0)+a(1)h_2(t,1)v_2^*(t,1)\right)dt\nonumber\\
&&=\int_0^1\left(a(0)g_1'(t-\vp)\tilde{u}(t,0)+a(1)^2g_2(t-\vp)u^*(t,1)\right)dt\nonumber\\
&&=\int_0^1\left(-a(0)g_1(t-\vp)\de_t\tilde{u}(t,0)+a(1)^2g_2(t-\vp)u^*(t,1)\right)dt.
\end{eqnarray}
On the other hand, \reff{dtu} yields
$$
\de_t\tilde{u}(t,0)=
\frac{1}{a(0)}
\int_0^1b_2(t,y)u^*(t,y)\,dy+a(0)\de_xu^*(t,0).
$$
This together with \reff{b} and \reff{c} implies \reff{a}.

Finally, Theorem \ref{eq2} follows directly from Theorem \ref{sys2}.

\section{Appendix 1}
\label{appendix1}
\setcounter{equation}{0}
\setcounter{theorem}{0}
Let $U$ be  a Banach space
and  $\Gamma$  a compact Lie group.
Moreover, let $\gamma \in \Gamma \mapsto S_\gamma \in {\cal L}(U)$
 be a strongly continuous representation  of $\Gamma$ in $U$, i.e.
\be 
\label{rep}
\left.
\begin{array}{l}
S_\gamma \circ S_\delta=S_{\gamma \delta},\;
S_e=I,\\
\gamma \in \Gamma \mapsto S_\gamma u \in U
\mbox{ is continuous for all } u \in U.
\end{array}
\right\}
\ee
The following theorem is due to E. N. Dancer (see \cite[Theorem 1 and Remark 4]{Dancer}). Roughly speaking, it claims the following: The map $\gamma \in \Gamma \mapsto S_\gamma u \in U$ is not $C^1$-smooth, in general, but it is if $u$ solves an equivariant equation $\F(u)=0$ with a $C^1$-Fredholm map $\F:U \to U$.
\begin{theorem}
\label{app1}
Suppose \reff{rep} and  let a $C^1$-map $\F:U \to U$ be given such that 
$$
S_\gamma \F(u)=\F(S_\gamma u) \mbox{ for all }
\gamma \in \Gamma \mbox{ and } u \in U.
$$ 
Moreover, let $u^0 \in U$ be given  such that 
$$
\F(u^0)=0,
\mbox{ and $\F'(u^0)$  is Fredholm of index zero from $U$ into $U$.}
$$
Then the following is true:

(i) The map $\gamma \in \Gamma \mapsto S_\gamma u^0 \in U$ is $C^1$-smooth.

(ii) If $\F$ is $C^2$-smooth, then the map $\gamma \in \Gamma \mapsto S_\gamma u^0 \in U$ is $C^2$-smooth also.
\end{theorem}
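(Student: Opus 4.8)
The plan is to prove this statement — which is Theorem~1 together with Remark~4 of \cite{Dancer} — by using the Fredholm hypothesis to reduce the orbit map $\gamma\mapsto S_\gamma u^0$ to a finite-dimensional object, and then appealing to an automatic-smoothness principle for continuous actions of Lie groups in finite dimensions.

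\emph{Reduction to the identity.} Since each $S_\gamma\in\mathcal{L}(U)$, and since near any $\gamma_0\in\Gamma$ one has $\gamma\mapsto S_\gamma u^0=S_{\gamma_0}\bigl(S_{\gamma_0^{-1}\gamma}u^0\bigr)$, which is the composition of the smooth left translation $\gamma\mapsto\gamma_0^{-1}\gamma$, the map $h\mapsto S_h u^0$ considered near $h=e$, and the bounded linear operator $S_{\gamma_0}$, it suffices to show that $\gamma\mapsto S_\gamma u^0$ is $C^1$ (resp. $C^2$) on some neighbourhood of $e$ in $\Gamma$.

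\emph{Lyapunov--Schmidt reduction.} Put $L:=\F'(u^0)$. As $L$ is Fredholm of index zero, fix topological splittings $U=\ker L\oplus X_1=\im L\oplus Y_1$ with $n:=\dim\ker L=\dim Y_1<\infty$, and let $P\colon U\to\ker L$ and $Q\colon U\to\im L$ be the associated continuous projections. Solving the $\im L$-component of $\F(u^0+\xi+\eta)=0$ for $\eta\in X_1$ by the implicit function theorem produces a map $\eta^*$, of the same regularity as $\F$, defined near $0$ in $\ker L$, with $\eta^*(0)=0$ and $(\eta^*)'(0)=0$, which reduces $\F(u)=0$ near $u^0$ to the finite-dimensional equation $\Phi(\xi):=(I-Q)\F\bigl(u^0+\xi+\eta^*(\xi)\bigr)=0$ in $\ker L$, every solution near $u^0$ being uniquely of the form $u^0+\xi+\eta^*(\xi)$. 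By the strong continuity~\reff{rep}, $S_\gamma u^0$ — which solves $\F(S_\gamma u^0)=S_\gamma\F(u^0)=0$ — lies in this solution neighbourhood for all $\gamma$ near $e$, so with $\xi(\gamma):=P\bigl(S_\gamma u^0-u^0\bigr)$ we obtain $S_\gamma u^0=u^0+\xi(\gamma)+\eta^*(\xi(\gamma))$, where now $\xi(\gamma)$ takes values in the \emph{finite-dimensional} space $\ker L$. It therefore suffices to show that $\gamma\mapsto\xi(\gamma)$ is $C^1$ (resp. $C^2$) near $e$.

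\emph{Finite-dimensional automatic smoothness, and the main obstacle.} Equivariance $S_\gamma\F=\F S_\gamma$ makes $S_\gamma$ carry solutions near $u^0$ to solutions, hence induces through the Lyapunov--Schmidt coordinates a continuous local action of $\Gamma$ on the reduced solution set; concretely $\xi(\gamma\delta)=\rho_\gamma(\xi(\delta))$, where $\rho_\gamma(\zeta):=\xi(\gamma)+P S_\gamma\bigl(\zeta+\eta^*(\zeta)\bigr)$ is of the same class as $\F$ in $\zeta$, satisfies $\rho_e=\mathrm{id}$ and $\rho_\gamma(0)=\xi(\gamma)$, and depends continuously on $\gamma$. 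The decisive step — and the genuine difficulty, which is the core of Dancer's argument — is to upgrade this merely continuous reduced orbit map to a differentiable one: having been brought into finite dimensions, a continuous Lie-group action by $C^k$-transformations is in fact $C^k$ (a Bochner--Montgomery-type automatic-smoothness phenomenon), so $\gamma\mapsto\xi(\gamma)=\rho_\gamma(0)$ is $C^1$ (resp. $C^2$) near $e$; then $S_\gamma u^0=u^0+\xi(\gamma)+\eta^*(\xi(\gamma))$ is $C^1$ (resp. $C^2$) near $e$, and the reduction to the identity finishes the proof. Nothing extra is needed for the $C^2$-assertion, since $\F\in C^2$ makes $\eta^*$, $\Phi$ and the maps $\rho_\gamma$ of class $C^2$. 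The role of the Fredholm hypothesis is precisely to confine the hard step to a finite-dimensional setting; without it the orbit map need not be differentiable at all (cf. Remark~\ref{multi}).
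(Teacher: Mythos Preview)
The paper does not give its own proof of this theorem: it is stated in Appendix~1 as a result of Dancer, with a reference to \cite[Theorem~1 and Remark~4]{Dancer}, and no argument is supplied. So there is no ``paper's proof'' to compare against.

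Your outline follows Dancer's original strategy faithfully: reduce to a neighbourhood of $e$, use the Fredholm property of $\F'(u^0)$ to perform a Lyapunov--Schmidt reduction so that the orbit is captured by a map $\gamma\mapsto\xi(\gamma)$ into the finite-dimensional kernel, and then invoke an automatic-smoothness result for continuous Lie-group actions in finite dimensions. You correctly flag that this last step is the heart of the matter. One point worth tightening: the maps $\rho_\gamma$ you write down satisfy the cocycle identity $\rho_{\gamma_1}\circ\rho_{\gamma_2}=\rho_{\gamma_1\gamma_2}$ only on the zero set of the bifurcation function $\Phi$, not on an open neighbourhood in $\ker L$; hence one cannot apply the Bochner--Montgomery theorem to an action on an open set of $\ker L$ directly. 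Dancer deals with exactly this issue, and his argument requires a bit more than a one-line invocation. Apart from that caveat your sketch is accurate, and since the paper itself offers nothing beyond the citation, your proposal in fact provides more than the paper does.
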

The following theorem is due to E. N. Dancer and A. Vanderbauwhede
(see \cite{V80,V82,V83}). It describes a parametrization of a tubular neighborhood of the orbit of a "good" element $u^0 \in U$:
\begin{theorem}
\label{appl2}
Suppose \reff{rep}, and let  $u^0 \in U$ be given such that the map $\gamma \in \Gamma \mapsto S_\gamma u^0 \in U$ is $C^1$-smooth.
Then the $\Gamma$-orbit
${\cal O}(u^0):=\{S_\gamma u^0 \in U: \; \gamma \in \Gamma\}$ of $u^0$
is a $C^1$-submanifold in $U$, and for any closed subspace $V$ in $U$ with
$$
U=T_{u^0}{\cal O}(u^0) \oplus V
$$
there exists an open neighborhood ${\cal V}_0 \subset V$ of zero such that $\{S_\gamma(u^0+v)\in U:\, \gamma \in \Gamma, \; v \in  
{\cal V}_0\}$ is an open neighborhood of ${\cal O}(u^0)$.
\end{theorem}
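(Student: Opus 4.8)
The plan is to identify $\mathcal{O}(u^0)$ with the image of the homogeneous space $\Gamma/\Gamma_{u^0}$, where $\Gamma_{u^0}:=\{\gamma\in\Gamma:S_\gamma u^0=u^0\}$ is the isotropy subgroup, and then to produce the tubular neighbourhood by one application of the inverse function theorem together with a bookkeeping argument based on the group action. By \reff{rep} the map $\Phi:\Gamma\to U$, $\Phi(\gamma):=S_\gamma u^0$, is continuous, so $\Gamma_{u^0}=\Phi^{-1}(\{u^0\})$ is a closed subgroup of the compact Lie group $\Gamma$, hence itself a Lie subgroup (Cartan), and $\Gamma/\Gamma_{u^0}$ is a compact $C^\infty$-manifold. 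Write $\mathfrak{g}=T_e\Gamma$, let $\mathfrak{h}$ be the Lie algebra of $\Gamma_{u^0}$, and fix a complement $\mathfrak{m}$ with $\mathfrak{g}=\mathfrak{h}\oplus\mathfrak{m}$. By hypothesis $\Phi$ is $C^1$.

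\textbf{Step 1: the orbit is a $C^1$-submanifold.} First I would show $\ker D\Phi(e)=\mathfrak{h}$. Since $S_{\exp(t\xi)}u^0=u^0$ for $\xi\in\mathfrak{h}$ and all $t$, the inclusion $\mathfrak{h}\subset\ker D\Phi(e)$ is clear. Conversely, if $D\Phi(e)\xi=0$, then $c(t):=\Phi(\exp t\xi)$ satisfies $c(t+s)=S_{\exp t\xi}c(s)$; differentiating in $s$ at $s=0$ and using that $S_{\exp t\xi}$ is bounded and linear gives $c'(t)=S_{\exp t\xi}c'(0)=S_{\exp t\xi}(D\Phi(e)\xi)=0$, so $c\equiv u^0$ and $\xi\in\mathfrak{h}$. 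Because $\Phi$ is constant on the right cosets $\gamma\Gamma_{u^0}$, it factors as $\Phi=\bar\Phi\circ\pi$ with $\pi:\Gamma\to\Gamma/\Gamma_{u^0}$ the quotient map (a $C^\infty$-submersion, hence admitting local $C^\infty$-sections) and $\bar\Phi:\Gamma/\Gamma_{u^0}\to U$ injective by the very definition of the isotropy quotient; $\bar\Phi$ is $C^1$. The equality $\ker D\Phi(e)=\mathfrak{h}$ together with the equivariance $\bar\Phi(\gamma\cdot x)=S_\gamma\bar\Phi(x)$ and transitivity of the $\Gamma$-action on $\Gamma/\Gamma_{u^0}$ shows $D\bar\Phi$ is injective at every point; an injective $C^1$-immersion from a compact manifold is an embedding, so $\mathcal{O}(u^0)$ is a $C^1$-submanifold of $U$ with $T_{u^0}\mathcal{O}(u^0)=D\Phi(e)\mathfrak{g}$, a finite-dimensional (hence closed and complemented) subspace; in particular complements $V$ as in the statement exist.

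\textbf{Step 2: local diffeomorphism and globalization.} Fix any closed $V$ with $U=T_{u^0}\mathcal{O}(u^0)\oplus V$ and set $\Psi:\mathfrak{m}\times V\to U$, $\Psi(\xi,v):=S_{\exp\xi}(u^0+v)$, which is $C^1$ near $(0,0)$ with $D\Psi(0,0)(\xi,v)=D\Phi(e)\xi+v$. Since $D\Phi(e)\mathfrak{h}=0$ we have $D\Phi(e)\mathfrak{m}=D\Phi(e)\mathfrak{g}=T_{u^0}\mathcal{O}(u^0)$, so $D\Psi(0,0)$ maps onto $T_{u^0}\mathcal{O}(u^0)\oplus V=U$; it is injective because $D\Phi(e)\xi=-v$ forces, by the direct sum, $v=0$ and $\xi\in\mathfrak{h}\cap\mathfrak{m}=\{0\}$ by Step 1. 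Hence $D\Psi(0,0)$ is an isomorphism of Banach spaces, and the inverse function theorem gives a small ball $\mathfrak{m}_0\times\mathcal{V}_0\subset\mathfrak{m}\times V$ on which $\Psi$ is a $C^1$-diffeomorphism onto an open neighbourhood $\mathcal{N}\ni u^0$. Each $S_\gamma$ is a linear homeomorphism of $U$, so $\bigcup_{\gamma\in\Gamma}S_\gamma\mathcal{N}$ is open and contains $\mathcal{O}(u^0)$; and since $\{\gamma\exp\xi:\gamma\in\Gamma,\ \xi\in\mathfrak{m}_0\}=\Gamma$ we may rewrite
\[
\bigcup_{\gamma\in\Gamma}S_\gamma\mathcal{N}=\bigcup_{\gamma\in\Gamma}\ \bigcup_{\xi\in\mathfrak{m}_0,\,v\in\mathcal{V}_0}\{S_{\gamma\exp\xi}(u^0+v)\}=\{S_\delta(u^0+v):\delta\in\Gamma,\ v\in\mathcal{V}_0\},
\]
which is precisely the claimed open neighbourhood of $\mathcal{O}(u^0)$, and $\mathcal{V}_0\subset V$ is the required neighbourhood of zero.

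The only non-routine point is the immersion claim in Step 1: an injective $C^1$-map need not be an immersion, so one must exploit the homogeneity — concretely the relation $c(t+s)=S_{\exp t\xi}c(s)$, which identifies $\ker D\Phi(e)$ with $\mathfrak{h}$, and the transitivity of the action, which propagates this to every point of $\Gamma/\Gamma_{u^0}$. Step 2 is then a standard inverse-function-theorem-plus-bookkeeping argument; the only mild care needed is that $\gamma\mapsto S_\gamma$ is merely strongly continuous, which is harmless because it enters only through the individual operators $S_\gamma$, each of which is a homeomorphism of $U$.
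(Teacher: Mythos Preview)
The paper does not prove this theorem; it is stated in Appendix~1 with attribution to Dancer and Vanderbauwhede and references \cite{V80,V82,V83}, so there is no ``paper's own proof'' to compare against. Your Step~1 (orbit as $C^1$-submanifold via the kernel computation $\ker D\Phi(e)=\mathfrak{h}$ and the compact-immersion-is-embedding fact) is correct.

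There is, however, a genuine gap in Step~2. You assert that $\Psi(\xi,v)=S_{\exp\xi}(u^0+v)$ is $C^1$ near $(0,0)$, but this fails under the hypotheses: the representation is only \emph{strongly} continuous, so for $v$ that is not a smooth vector the map $\xi\mapsto S_{\exp\xi}v$ need not even be differentiable. Concretely, for $\Gamma=S^1$ acting by translation on $C_{per}(\R)$, take $v_n(t)=\sin(2\pi nt)/n$ and $\xi_n=1/(4n)$; then $\|(S_{\xi_n}-I)v_n\|_\infty=\sqrt{2}/n$, which is not $o(\|\xi_n\|+\|v_n\|)=o(1/n)$, so $\Psi$ is not Fr\'echet differentiable at $(0,0)$. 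Your final remark that strong continuity is ``harmless because it enters only through the individual operators $S_\gamma$'' addresses only the globalization step, not this earlier use.

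A clean repair is to replace the inverse function theorem by a Brouwer-degree argument on the finite-dimensional piece. With $P:U\to T_{u^0}\mathcal{O}(u^0)$ the projection along $V$, set $F(\xi,u):=P(S_{\exp(-\xi)}u-u^0)$ from $\mathfrak{m}\times U$ to $T_{u^0}\mathcal{O}(u^0)$. For $u=u^0$ the map $\xi\mapsto F(\xi,u^0)$ is $C^1$ (here only the smoothness of $\gamma\mapsto S_\gamma u^0$ is used) with isolated nondegenerate zero at $\xi=0$, hence has local degree $\pm1$ on a small ball $B\subset\mathfrak{m}$. Since $\|F(\xi,u)-F(\xi,u^0)\|\le\|P\|\sup_\gamma\|S_\gamma\|\,\|u-u^0\|$ uniformly in $\xi$, homotopy invariance of degree gives, for every $u$ sufficiently close to $u^0$, a zero $\xi(u)\in B$; then $v:=S_{\exp(-\xi(u))}u-u^0\in V$ is small and $u=S_{\exp\xi(u)}(u^0+v)$. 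Your globalization via $\bigcup_\gamma S_\gamma\mathcal{N}$ then goes through unchanged. (Alternatively, one can apply the inverse function theorem to the genuinely $C^1$ map $\tilde\Psi(\xi,v)=S_{\exp\xi}u^0+v$, but this yields the form $S_\gamma u^0+v$ rather than $S_\gamma(u^0+v)$, and passing from one to the other for arbitrary $V$ again requires the degree argument.)
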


\section{Appendix 2}
\label{appendix2}
\setcounter{equation}{0}
\setcounter{theorem}{0}
Let  $(U_1,\|\cdot\|_1)$ and $(U_2,\|\cdot\|_2)$
be  Banach spaces, $(\Lambda,\|\cdot\|_\Lambda)$ be a normed vector space  and $\F:\Lambda\times U_1 \to U_2$ be a map. The following theorem is folklore, and we prove it for the convenience of the reader. It is a version of the implicit function theorem for cases, such that $\partial_u\F$ exists, but $\partial_u\F(\cdot,u)$ may be discontinuous, and $\F(\cdot,u)$ may be non-differentiable.
\begin{theorem}
\label{app3} 
Let $\lambda_0 \in \Lambda$ and $u_0 \in U_1$
with $\F(\lambda_0,u_0)=0$ be given  elements. Suppose that
\begin{eqnarray}
\label{ass1}
&&\F(\cdot,u) \mbox{ is continuous for all } u \in U_1,\\
\label{ass2}
&&\F(\lambda,\cdot) \mbox{ is $C^2$-smooth for all } \lambda \in \Lambda.
\end{eqnarray}
Moreover, suppose that there exist $\ve_0>0$ and $c>0$ such that for all $\lambda \in \Lambda$ and all $u,v,w \in U_1$ with $\|\lambda-\lambda_0\|_\Lambda+\|u-u_0\|_1<\ve_0$ it holds that
\begin{eqnarray}
\label{ass3}
&&\partial_u\F(\lambda,u_0) \mbox{ is Fredholm of index zero from $U_1$ into $U_2$},\\
\label{ass4}
&&\|v\|_1 \le c\|\partial_u\F(\lambda,u_0)v\|_2,\\\label{ass5}
&&\|\partial^2_u\F(\lambda,u)(v,w)\|_2
\le c\|v\|_1\|w\|_1.
\end{eqnarray}
Then the following is true:

(i) There exist $\ve\in (0,\ve_0)$ and $\delta>0$ such that for all $\lambda \in \Lambda$ with $\|\lambda-\lambda_0\|_\Lambda<\ve$ there exists a unique $u=\hat{u}(\lambda) \in U_1$  with $\|u-u_0\|_1<\delta$ and $\F(\lambda,u)=0$.

(ii) The data-to-solution map $\hat{u}$ is continuous.
\end{theorem}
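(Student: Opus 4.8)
The plan is to turn the equation $\F(\lambda,u)=0$, for $\lambda$ near $\lambda_0$, into a fixed point problem for a Newton-type map that is a contraction with constants uniform in $\lambda$, and then to read off existence, local uniqueness and continuity from Banach's fixed point theorem together with the (weak) continuity hypothesis \reff{ass1}.

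First I would record that \reff{ass3} and \reff{ass4} force $\partial_u\F(\lambda,u_0)$ to be an isomorphism from $U_1$ onto $U_2$ for every $\lambda$ with $\|\lambda-\lambda_0\|_\Lambda<\ve_0$: Fredholmness of index zero plus injectivity (immediate from \reff{ass4}) gives bijectivity, and since Fredholm operators have closed range the open mapping theorem applies, while \reff{ass4} also yields the uniform bound $\|\partial_u\F(\lambda,u_0)^{-1}\|\le c$. Then, for $\delta<\ve_0$ still to be chosen, I introduce on the closed ball $\overline{B}:=\{u:\|u-u_0\|_1\le\delta\}$ the map
$$\Psi_\lambda(u):=u-\partial_u\F(\lambda,u_0)^{-1}\F(\lambda,u),$$
whose fixed points in $\overline B$ are exactly the solutions of $\F(\lambda,u)=0$ there. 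Using \reff{ass2} to write $(\partial_u\F(\lambda,u)-\partial_u\F(\lambda,u_0))v=\int_0^1\partial_u^2\F(\lambda,u_0+s(u-u_0))(u-u_0,v)\,ds$ and then the uniform estimate \reff{ass5}, I get $\|\partial_u\F(\lambda,u)-\partial_u\F(\lambda,u_0)\|\le c\delta$ on $\overline B$, hence $\|\Psi_\lambda'(u)\|\le c^2\delta$; choosing $\delta$ with $c^2\delta\le\frac{1}{2}$ makes $\Psi_\lambda$ a $\frac{1}{2}$-contraction on $\overline B$, with the same constant for all admissible $\lambda$. To see that $\Psi_\lambda$ maps $\overline B$ into itself I would use $\|\Psi_\lambda(u)-u_0\|_1\le\frac{1}{2}\|u-u_0\|_1+\|\partial_u\F(\lambda,u_0)^{-1}\F(\lambda,u_0)\|_1\le\frac{1}{2}\delta+c\|\F(\lambda,u_0)\|_2$, and then invoke \reff{ass1} together with $\F(\lambda_0,u_0)=0$ to pick $\ve\in(0,\ve_0-\delta)$ so small that $\|\F(\lambda,u_0)\|_2<\delta/(2c)$ whenever $\|\lambda-\lambda_0\|_\Lambda<\ve$. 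Banach's theorem then gives, for each such $\lambda$, a unique fixed point $\hat u(\lambda)$ in $\overline B$, which in fact lies in the open ball (the strict inequality $\|\F(\lambda,u_0)\|_2<\delta/(2c)$ propagates through the same estimate), and this is assertion (i).

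For assertion (ii) I would fix $\mu$ with $\|\mu-\lambda_0\|_\Lambda<\ve$, keep $\hat u(\mu)\in\overline B$ frozen, and estimate, using that $\Psi_\lambda$ is a $\frac{1}{2}$-contraction on $\overline B$ and $\hat u(\lambda)=\Psi_\lambda(\hat u(\lambda))$,
$$\|\hat u(\lambda)-\hat u(\mu)\|_1\le\tfrac{1}{2}\|\hat u(\lambda)-\hat u(\mu)\|_1+\|\Psi_\lambda(\hat u(\mu))-\hat u(\mu)\|_1,$$
whence $\|\hat u(\lambda)-\hat u(\mu)\|_1\le 2\|\Psi_\lambda(\hat u(\mu))-\hat u(\mu)\|_1\le 2c\|\F(\lambda,\hat u(\mu))\|_2$. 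Since $\F(\mu,\hat u(\mu))=0$ and $\F(\cdot,\hat u(\mu))$ is continuous by \reff{ass1}, the right-hand side tends to zero as $\lambda\to\mu$, which proves continuity of $\hat u$.

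The step I expect to be the real obstacle is the continuity argument, because $\partial_u\F(\cdot,u)$ need not be continuous in $\lambda$ and $\F(\cdot,u)$ need not be differentiable in $\lambda$, so one cannot differentiate $\hat u$ nor compare the operators $\Psi_\lambda$ and $\Psi_\mu$ directly; the device of evaluating the single contraction $\Psi_\lambda$ at the frozen vector $\hat u(\mu)$ and appealing only to the scalar-in-$\lambda$ continuity \reff{ass1} is exactly what circumvents this. Everything else is bookkeeping of the smallness constants, so that $\|\lambda-\lambda_0\|_\Lambda+\|u-u_0\|_1<\ve_0$ holds wherever \reff{ass3}--\reff{ass5} are invoked.
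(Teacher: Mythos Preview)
Your argument is correct and follows essentially the same route as the paper: both rewrite $\F(\lambda,u)=0$ as a Newton-type fixed point problem $u\mapsto u-\partial_u\F(\lambda,u_0)^{-1}\F(\lambda,u)$, use \reff{ass3}--\reff{ass4} for the uniform inverse bound, \reff{ass5} for the contraction estimate $c^2\delta$, and \reff{ass1} for self-mapping and for continuity of $\hat u$ via the inequality $\|\hat u(\lambda)-\hat u(\mu)\|_1\le 2c\|\F(\lambda,\hat u(\mu))\|_2$. Your extra care in taking $\ve<\ve_0-\delta$ so that \reff{ass3}--\reff{ass5} are always applicable is a detail the paper leaves implicit.
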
 
\begin{proof}
 $(i)$ Take $\lambda \in \Lambda$
with $\|\la-\la_0\|_\lambda<\ve_0$. Due to \reff{ass3} and \reff{ass4}, the operator
$\partial_u\F(\lambda,u_0)$ is an isomorphism  from $U_1$ onto $U_2$. Hence, the equation $F(\lambda,u)=0$ is equivalent to the fixed point problem
$$
u=\G(\lambda,u):=u-\partial_u\F(\lambda,u_0)^{-1}\F(\lambda,u).
$$
For $\delta>0$, write $B_\delta:=\{u \in U_1:\; \|u-u_0\|_1 \le \delta\}$. Take $u \in B_\delta$ and $v \in U$. Then
$$
\left(I-\partial_u\F(\lambda,u_0)^{-1}\partial_u\F(\lambda,u)\right)v=
\partial_u\F(\lambda,u_0)^{-1}\int_0^1
\partial^2_u\F(\lambda,u_0+s(u-u_0))(u-u_0,v)ds.
$$
Hence, if $\delta<\ve_0$, then \reff{ass4} and \reff{ass5} yield that
$\|\left(I-\partial_u\F(\lambda,u_0)^{-1}\partial_u\F(\lambda,u)\right)v\|_1
\le c^2\delta \|v\|_1$.
Therefore, for $u_1,u_2 \in B_\delta$ we get
\begin{eqnarray*}
&&\|\G(\la,u_1)-\G(\la,u_2)\|_1\nonumber\\
&&=\left\|\int_0^1\left(I-\partial_u\F(\lambda,u_0)^{-1}\partial_u\F(\lambda,su_1+(1-2)u_2)\right)(u_1-u_2)ds\right\|_1\nonumber\\
&&\le\frac{1}{2}\|u_1-u_2\|_1 \mbox{ for } \delta:=\frac{1}{2}\min\left\{\frac{1}{c^2},\ve_0\right\}.
\end{eqnarray*}
Moreover, for $u\in B_\delta$ we have
$$
\|\G(\la,u)\|_1 \le \|\G(\la,u)-\G(\la,u_0)\|_1+
\|\partial_u\F(\lambda,u_0)^{-1}\F(\lambda,u_0)\|_1
\le \frac{\delta}{2}+c\|\F(\la,u_0)\|_2.
$$
Hence, \reff{ass1} yields that for the map $\G(\la,\cdot)$ is strictly contractive from $B_\delta$ into itself. Therefore, the Banach fixed point theorem yields claim $(i)$.

$(ii)$ Take $\la_1,\la_2 \approx \la_0$. Then
\begin{eqnarray*}
&&\|\hat{u}(\la_1)-\hat{u}(\la_2)\|_1=
\|\G(\la_1,\hat{u}(\la_1))-\G(\la_2,\hat{u}(\la_2))\|_1\\
&&\le \|\G(\la_1,\hat{u}(\la_1))-\G(\la_1,\hat{u}(\la_2))\|_1+
\|\G(\la_1,\hat{u}(\la_2))-\G(\la_2,\hat{u}(\la_2))\|_1\\
&&\le \frac{1}{2}\|\hat{u}(\la_1)-\hat{u}(\la_2)\|_1+\|\partial_u\F(\la_1,u_0)^{-1}\F(\la_1,\hat{u}(\la_2))\|_1.
\end{eqnarray*}
Hence, \reff{ass1} yields
$
\|\hat{u}(\la_1)-\hat{u}(\la_2)\|_1
\le 2c \|\F(\la_1,\hat{u}(\la_2))\|_2
\to 0 \mbox{ for } \la_1 \to \la_2.
$
\end{proof}

\begin{remark}
\label{notcont}\rm
Since $\partial_u\F(\cdot,u_0)$ is not supposed to be continuous (with respect to the uniform operator norm in ${\cal L}(U_1;U_2)$), conditions \reff{ass3} and \reff{ass4} are not satisfied for all $\la \approx \lambda_0$, in general, if these conditions are satisfied for $\la=\lambda_0$ only.
\end{remark}

\section*{Acknowledgments}
Irina Kmit was supported by the VolkswagenStiftung Project ``From Modeling and Analysis to Approximation''.

\end{document}